\newcommand{\D}{\mathcal{D}}
\newcommand{\J}{\mathcal{J}}
\newcommand{\E}{\mathcal{E}}
\newcommand{\F}{\mathcal{F}}
\newcommand{\G}{\mathcal{G}}
\newcommand{\h}{\mathcal{H}}
\newcommand{\s}{\mathcal{S}}
\newcommand{\A}{\mathcal{A}}
\newcommand{\M}{\mathcal{M}}
\newcommand{\N}{\mathcal{N}}
\newcommand{\Pe}{\mathcal{P}}
\newcommand{\B}{\mathcal{B}}
\newcommand{\comp}{\mathcal{C}}
\newcommand{\sts}{\mathcal{O}_X}
\newcommand{\Pic}{{\rm Pic}}
\newcommand{\quat}{\mathcal{D}}
\newcommand{\lb}{\mathcal{L}}
\newcommand{\torus}{\mathcal{T}}
\newtheorem{lemma}{Lemma}
\newtheorem{theorem}{Theorem}
\newtheorem{proposition}{Proposition}
\newtheorem{corollary}{Corollary}
\theoremstyle{definition}
\newtheorem{example}{Example}
\newtheorem{remark}{Remark}
\keywords{Jordan algebras, Albert algebras, Tits process, first Tits construction.}
\subjclass[2000]{Primary: 17C40.}
\title{Albert algebras over curves of genus zero and one}
\date{10.9.2007}
\author{S. Pumpl\"un}
\email{susanne.pumpluen@nottingham.ac.uk}
\address{School of Mathematics\\
University of Nottingham\\
University Park\\
Nottingham NG7 2RD\\
United Kingdom}
\begin{document}

\begin{abstract}
 Albert algebras and other Jordan algebras are constructed over curves of genus zero and one,
  using a generalization of the Tits process and the first Tits construction due to Achhammer.
\end{abstract}

\maketitle

\section*{Introduction}

In the structure theory of Jordan algebras, exceptional simple Jordan algebras ({\it Albert algebras}) are
as important as octonion algebras are in the structure theory of alternative algebras.
In his PhD thesis [Ach1, 2], Achhammer developed a generalized Tits process for algebras over a locally ringed space
which, applied to algebras over arbitrary rings,  generalized  the classical Tits process for algebras over base rings
 by Petersson-Racine [P-S1, 2] and thus, in particular,
the classical first Tits construction. He obtained general results on Albert algebras over locally ringed spaces
and found
examples of Albert algebras over $n$-dimensional projective space.  Achhammer's PhD
thesis was only partially published, hence some of his results have been written up in the first sections of [Pu2].

Independently, Albert
algebras over integral schemes were investigated by
Parimala-Suresh-Thakkur [P-S-T1, 2]. In [P-S-T2], generalized first and second Tits constructions to obtain Albert algebras
were introduced, starting with an Azumaya algebra of rank 9 over a domain $R$ such that $2,3\in R^\times$.
 Over integral schemes, the first Tits construction of [P-S-T2] coincides with the one given in [Ach1, 2] when starting
with an Azumaya algebra of constant rank 9, the second one with the Tits process in [Ach1, 2] when starting with
the subsheaf of hermitian elements of an Azumaya algebra of constant rank 9 over a suitable sheaf of rings
 $\mathcal{O}_X'$, which carries an involution
$*$ such that $\mathcal{H}(\mathcal{O}_X',*)=\mathcal{O}_X$.
Albert algebras over $\mathbb{A}_k^2$  which do not arise from a generalized first or second
 Tits construction were constructed in [P-S-T2].

We construct Jordan algebras  over curves of genus zero and genus one
using Achhammer's Tits process and the general first Tits construction as explained in [Ach1, 2].
It is interesting to see which (selfdual)
 vector bundles can carry the structure of an Albert algebra, and if there are any such bundles consisting of
 indecomposable summands of large ranks.
Since the automorphism groups of these algebras are algebraic group schemes of type $F_4$ over $X$, the results obtained
also contain information on the structure of these groups schemes.

 The vector bundles over curves of genus zero are of relatively simple
type (there is no indecomposable vector bundle of rank greater than 2, and the only absolutely indecomposable ones are the line
bundles). Thus it is worth going one step further and looking at algebras curves of genus one. The vector bundles over elliptic curves
and their behaviour are well-known [At, AEJ1, 2, 3]. There exist absolutely indecomposable selfdual vector bundles
 of arbitrary rank, and indecomposable ones of different types.
Over elliptic curves (or more generally, over curves of genus one, since, indeed,
our arguments generally would work also for curves without rational points), a classification
of Albert algebras seems to be still out of reach. One of the problems is that the Azumaya algebras of constant
rank 3 over an elliptic curve are not sufficiently know yet, at least to the author's knowledge, which would
be important in order to get an idea on all the first Tits constructions which are possible
starting with such an algebra. There are some results on  octonion algebras [Pu1], which help to give examples
of Albert algebras of the type $\h_3(\comp)$, but again lacking a full classification of these algebras, we
still miss some crucial information. Another stumbling block is to actually compute the ingredients for a first Tits
construction or a Tits process. Here the vast choice of vector bundles possible over an elliptic curve creates some
difficulties.
 We  construct a large number of non-isomorphic Albert algebras
by applying the first Tits construction to Azumaya algebras of the type $\E nd_X(\E)$.
The fact that the Theorem of Krull-Schmidt holds both over curves of genus zero and one makes it usually
 easy to decide which
Albert algebras must be non-isomorphic.

In Section 1 we establish some notation and collect the basic facts needed later. Algebras over curves of genus zero are
studied in Sections 2, 3 and 4; in particular, Albert algebras over curves of genus zero which are obtained by a first
Tits construction in Section 3.
The Albert algebras over the projective line $\mathbb{P}_k^1$, with $k$ a base field of characteristic zero,
which can be realized as a first Tits construction starting with an Azumaya algebra over $\mathbb{P}_k^1$ of constant rank
9, have been described in [Ach1, 4.7] (see Theorem 1).
Over nonrational curves $X$ of genus zero, we describe all first Tits constructions starting with an
Azumaya algebra over $X$ of rank 9 which is defined over $k$ (Corollary 2).
All Albert algebras which can be obtained by a first Tits construction
starting with an Azumaya algebra $\A$ over $X$ such
that, at the generic point $\xi$, $\A(\xi)$ is a division algebra, are defined over $k$ (Corollary 3).
We then give examples of Albert algebras constructed by the Tits process in Section 4.
 In Section 5, the basic terminology for elliptic curves is
 introduced. \'Etale First Tits constructions over elliptic curves are studied in Section 6 and
  Albert algebras over elliptic curves of the kind $\h_3(\comp)$ in Section 7.
 Albert algebras over elliptic curves obtained by a first Tits construction starting with an Azumaya algebra
 of the type $\E nd_X(\E)$ are constructed in Section 8.

We use the standard terminology from algebraic geometry, see Hartshorne's book [H] and the one for algebras
 developed in [P]. For the standard terminology on Jordan algebras, the reader is referred to the books by
McCrimmon [M2], Jacobson [J] and Schafer [Sch].

\section{Preliminaries}

In the following, let $(X,\sts)$ be a locally ringed space and $k$ a field.

\subsection{Algebras over $X$}

 For $P \in X$ let $\mathcal{O}_{P,X}$ be the local ring of
$\mathcal{O}_X$ at $P$ and $m_P$ the maximal ideal of $\mathcal{O}_{P,X}$. The corresponding residue class
field is denoted by $k(P)=\mathcal{O}_{P,X}/m_P$. For an $\mathcal{O}_X$-module $\mathcal{F}$ the stalk of
$\mathcal{F}$ at $P$ is denoted by $\mathcal{F}_P$. $\mathcal{F}$ is said to have {\it full support} if
${\rm Supp}\,\mathcal{F}=X$; i.e., if $\mathcal{F}_P\not=0$ for all $P\in X$. We call
$\mathcal{F}$ {\it locally free of finite rank} if for each $P\in X$ there is an open neighborhood $U\subset X$
of $P$ such that $\mathcal{F}|_U=\mathcal{O}_U^r$ for some integer $r\geq 0$. The {\it rank} of
$\mathcal{F}$ is defined to be ${\rm sup}\{{\rm rank}_{\mathcal{O}_{P,X}}\mathcal{F}_P\,|\, P\in X\}$.
 The term ``$\mathcal{O}_X$-algebra" (or ``algebra over $X$'') always refers to nonassociative
$\mathcal{O}_X$-algebras which are unital and locally free of finite rank as $\mathcal{O}_X$-modules.
 An $\mathcal{O}_X$-algebra $\mathcal{A}$ is called {\it alternative} if $x^2y=x(xy)$ and
$yx^2=(yx)x$ for all sections $x,y$ of $ \mathcal{A}$ over the same open subset of $X$.

\subsection{Cubic forms}
Let $d$ be a positive integer.  Let $\mathcal{M}$, $\N$ be $\mathcal{O}_X$-modules which are locally free of finite rank.
 In general, that is if there are no restrictions on the global sections $H^0(X,\sts)$,
 polynomial maps from $\M$ to $\N$ are defined
analogously as it was done in [R], [L1, \S 18] or [L2, Section 1] for polynomial maps between modules over rings, see  [Ach1].
In that case forms of degree $d$ are usually identified with the induced polynomial maps.

If $2,3\in H^0(X,\mathcal{O}_X^\times)$, which will be the case considered in large parts of the paper,
 the definition for cubic forms is equivalent to the following:
 A {\it cubic form} on $\mathcal{M}$ over $\mathcal{O}_X$ is a map $N:\mathcal{M}\to \mathcal{O}_X$
 such that $N(a x)=a^3 N(x)$ for all sections $a$ in $\mathcal{O}_X$, $x$ in $\mathcal{M}$ over the same open subset
 of $X$, where the map $\theta : \mathcal{M} \times  \mathcal{M} \times \mathcal{M}\to \mathcal{O}_X$ defined by
 $$\theta(x,y,z)=\frac{1}{6}(\varphi(x+y+z)-\varphi(x+y)-\varphi(x+z)-\varphi(y+z)+ \varphi(x)+\varphi(y)+\varphi(z))$$
  is a trilinear form over $\mathcal{O}_X$.

For a cubic form $N:\mathcal{M}\to \mathcal{O}_X$ on a locally free
$\mathcal{O}_X$-module $\mathcal{M}$ of finite rank  with full support, $N$ is called {\it  nondegenerate}
 if $N(P):\mathcal{M}(P)\to k(P)$  is nondegenerate for all $P\in X$.
This notion of nondegeneracy is invariant under base change.

A trilinear form $\theta : \mathcal{M} \times  \mathcal{M} \times \mathcal{M}\to \mathcal{O}_X$ over $\mathcal{O}_X$
 is called {\it symmetric} if $\theta (x,y,z)$ is invariant under all permutations of its variables.
A locally free $\sts$-module $\mathcal{M}$ together with a symmetric trilinear map $\theta: \mathcal{M}\times
\mathcal{M}\times \mathcal{M}\to\sts$ is called a {\it trilinear space}.

 Two trilinear spaces $(\mathcal{M}_i,\theta_i)$ ($i=1,2$) are called {\it isomorphic}
if there exists an $\mathcal{O}_X $-module isomorphism $f:\mathcal{M}_1\to \mathcal{M}_2$ such that
$\theta_2(f(v_1),f(v_2),f(v_3))=\theta_1(v_1,v_2,v_3)$ for all sections $v_1,v_2,v_3$ of
$\mathcal{M}_1$ over the same open subset of $X$.
The {\it orthogonal sum} $(\mathcal{M}_1,\theta_1)\perp (\mathcal{M}_1,\theta_2)$ of two trilinear spaces
$(\mathcal{M}_i,\theta_i)$, $i=1,2$, is defined as the $\mathcal{O}_X$-module
 $\mathcal{M}_1\oplus \mathcal{M}_2$ together with the trilinear form
$(\theta_1 \perp \theta_2)(u_1+v_1,u_2+v_2,u_3+v_3)=\theta_1(u_1,u_2,u_3)+\theta_2(v_1,v_2,v_3)$.
If $2,3\in H^0(X,\mathcal{O}_X^\times)$ we canonically identify symmetric trilinear forms and cubic forms.

\subsection{Composition algebras over $X$}
 Following  [P], an $\mathcal{O}_X$-algebra $\mathcal{C}$ is called a {\it composition algebra} over $X$
if it has full support, and if there exists a quadratic form $N \colon \mathcal{C} \to  \mathcal{O}_X$ such that
the induced symmetric bilinear form $N(u,v)  =
N(u+v)-N(u)-N(v)$ is {\it nondegenerate}; i.e., it determines a module isomorphism
$\mathcal{C} \overset{\sim}{\longrightarrow} \check{\mathcal{C}}
=\mathcal{H}om (\mathcal{C},\mathcal{O}_X)$,
and such that $N(uv)=N(u)N(v)$ for all sections $u,v$ of $\mathcal{C}$ over the
same open subset of $X$.
 $N$ is uniquely determined by these conditions and called the {\it norm} of $\mathcal{C}$.
 It is denoted by $N_\mathcal{C}$.
 Composition algebras over $X$ are invariant under base change, and exist only in ranks 1, 2, 4 or 8.
A composition algebra of constant rank 2 (resp. 4 or 8) is called a {\it quadratic \'etale algebra} (resp. {\it quaternion algebra} or
{\it octonion algebra}). A composition algebra over $X$ of constant rank is called {\it split}
 if it contains a composition subalgebra
isomorphic to $\mathcal{O}_X \oplus \mathcal{O}_X$.  There are several construction methods for composition algebras
over locally ringed spaces:  There  exists a Cayley-Dickson process ${\rm Cay}(\D,\Pe,N_\Pe)$
which is described in [P1].

\subsection{Jordan algebras and cubic forms with adjoint and base point} ([Ach1], [P-R])
Let $\J$ be a locally free $\sts$-module of
finite rank. Following  [Ach1], $( \mathcal{J},U,1)$ with $1\in H^0(X,\mathcal{J})$ is a {\it Jordan algebra} over $X$ if
\begin{enumerate}
\item $U:\mathcal{J}\to\mathcal{E}nd_{\mathcal{O}_X}(\mathcal{J}), x\to U_x$ is a quadratic map;
\item $U_1=id_\mathcal{J}$;
\item $U_{U_x(y)}=U_x\circ U_y\circ U_x$ for all sections $x,y$ in $ \mathcal{J}$;
\item $U_{x}\circ U_{y,z}(x)=U_{x,U_x(z)}(y)$ for all sections $x,y,z$ in $ \mathcal{J}$;
\item For every commutative associative $\sts$-algebra $\sts'$, $\mathcal{J}\otimes \sts'$ satisfies $(3)$ and $(4)$.
\end{enumerate}
In general, we  write  $\J$ instead of $( \mathcal{J},U,1)$.

 An  $\mathcal{O}_X$-algebra $\mathcal{J}$ is called an {\it Albert algebra}
 if $\mathcal{J}(P)=J_P\otimes k(P)$ is an Albert algebra over $k(P)$
for all $P\in X$. If $\mathcal{J}$ is a Jordan algebra over a scheme $(X,\mathcal{O}_X)$
then $\mathcal{J}$ is an Albert algebra if and only if there is a covering
$V_i\to X$ in the flat topology on $X$ such that $\mathcal{J}\otimes \mathcal{O}_{V_i}\cong \mathcal{H}_3({\rm Zor}( \mathcal{O}_{V_i}))$
where ${\rm Zor}(  \mathcal{O}_{V_i})$ denotes the octonion algebra of Zorn vector matrices over  $\mathcal{O}_{V_i}$
and  $\mathcal{H}_3({\rm Zor}( \mathcal{O}_{V_i}))$ is the Jordan algebra of 3-by-3 hermitian matrices
with entries in ${\rm Zor}( \mathcal{O}_{V_i})$ and scalars $ \mathcal{O}_{V_i}$  on the diagonal [Ach1, 1.10].

A tripel $(N,\sharp,1)$ is a {\it cubic form with adjoint and base point} on $\J$ if $N:\J\to \sts$ is a cubic form,
$\sharp:\J\to\J$ a quadratic map and $1\in H^0(X,\J)$, such that
$$\begin{array}{l}
 x^{\sharp\,\sharp}=N(x)x,\\
 T(x^\sharp,y)=D_yN(y) \text{ for } T(x,y)=-D_xD_y {\rm log} N(1),\\
 N(1)=1,\, 1^\sharp=1,\\
 1\times y=T(y)1-y \text{ with } T(y)=T(y,1),\, x\times y=(x+y)^\sharp-x^\sharp-y^\sharp
\end{array}$$
for all sections $x,y$ in $\J$ over the same open subset of $X$.
Here, $D_yN(x)$ denotes the
directional derivative of $N$ in the direction $y$, evaluated at $x$.
The symmetric bilinear form $T$ is called the {\it trace form} of $\J$.

Every cubic form with adjoint and base point $(N,\sharp,1)$ on a locally free $\sts$-module $\J$ of finite rank
defines a unital Jordan algebra structure $\mathcal{J}(N,\sharp,1)=(\J,U,1)$ on $\J$ via
$$U_x(y)=T(x,y)x-x^\sharp\times y$$
for all sections $x,y$ in $\J$, where the identities given in
[P-R, p.~213] hold for all sections in $\J$. A section $x\in\J$ is invertible iff $N(x)$
is invertible. In that case, $x^{-1}=N(x)^{-1}x^\sharp$.

\subsection{The  Tits process} (due to Achhammer [Ach1], see [Pu2] for details.)
Let $(X,\sts')$ be a locally ringed space. Let $*:\sts'\to \sts'$ be an involution, $\B$ an associative $\sts'$-algebra
and $*_\B$ an involution on $\B$ such that $*_\B|_{\sts'}=*$. We
write $*_\B$ instead of $*$ from now on, also for the involution $*$ on $\sts'$. Let $(N_\B,\sharp_\B, 1)$ be a
cubic form with adjoint and base point on $\B$.

Assume that $(N_\B,\sharp_\B, 1)$ is $\B$-{\it admissible}.
That means,
\begin{enumerate}
\item $\B^+=J(N_\B,\sharp_\B,1)$ with $1\in H^0(X,\B)$ the unit element in $\B$, and $xyx=T_\B(x,y)x-x^{\sharp_\B}\times_\B
y$ for $x,y$ in $\B$;
\item $N_\B(xy)=N_\B(x)N_\B(y)$ for all $x,y$ in $\B$;
\item $N_\B(x^{*_\B})=N_\B(x)^{*_\B}$ for $x$ in $\B$.
\end{enumerate}
 Write $\B^\times$ for the sheaf of units
of $\B$. The morphism of group
sheaves $N_\B:\B^\times\to \sts'^\times$ induces a morphism
$$\sharp_\B: {\rm Pic}_{l}\B\to  {\rm Pic}_{l}\B^{\rm op}.$$
A cubic map $f:\Pe \to \E$ in the category of $\sts'$-modules is called {\it multiplicative} if
$$f(bw)=N_\B(b)f(w)$$
 for all sections $b$ in $\B$, $w$ in $\E$. Let $\F$ be a right $\B$-module.
 A quadratic map $g:\Pe \to \F$ in the category of $\sts'$-modules is called {\it multiplicative} if
$$g(bv)=g(v)b^{\sharp_\B}$$
 for all sections $b$ in $\B$, $v$ in $\Pe$. A multiplicative cubic map $N:\Pe\to N_\B(\Pe)$ is called a {\it norm}
 on $\Pe$ if $N$ is universal in the category of multiplicative cubic maps on $\Pe$.
 A multiplicative quadratic map $\sharp:\Pe\to \Pe^{\sharp_\B}$ is called an {\it adjoint} on $\Pe$ if $\sharp$
 is universal in the category of multiplicative quadratic maps on $\Pe$.

Let $\Pe^\vee$ denote the right $\B$-module $\mathcal{H}om_\B(\Pe,\B)$.
Let $\Pe\in {\rm Pic}_{l}\B$ such that $N_\B(\Pe)\cong \sts'$ and let $N:\Pe\to \sts'$ be a cubic norm on $\Pe$.
Then
$$\Pe^{\sharp_B}\cong \Pe^\vee,\quad {\Pe^\vee}^{\sharp_B}\cong \Pe$$ and
$N_\B(\Pe^\vee)\cong\sts'$.
There exists a uniquely determined cubic norm $\check{N}:\Pe^\vee\to \sts'$ and
uniquely determined adjoints $\sharp:\Pe\to \Pe^\vee$ and $\check{\sharp}:\Pe^\vee\to \Pe$ such that
\begin{enumerate}
\item $\langle w,w^\sharp\rangle=N(w)1$;
\item $\langle \check{w}^{\check{\sharp}},\check{w}\rangle=\check{N}(\check{w})1$;
\item $w^{\sharp \, \check{\sharp}}=N(w)w$
\end{enumerate}
for all $w$ in $\Pe$, $\check{w}$ in $\Pe^\vee$. Moreover,
\begin{enumerate}
\item $\check{w}^{\check{\sharp}\,\sharp }=\check{N}(\check{w})\check{w}$;
\item $\langle w, \check{w}\rangle^{\sharp_\B}= \langle \check{w}^{\check{\sharp}}, w^{\sharp}\rangle$;
\item $N_\B(\langle w,w^\sharp\rangle)=N(w)\check{N}(\check{w})$;
\item $D_{w'}N(w)=T\B(\langle w', w^\sharp\rangle)$;
\item $D_{\check{w}'}\check{N}(\check{w})=T\B(\langle\check{w}^{\check{\sharp}},\check{ w}'\rangle)$;
\item $\langle w,\check{w}\rangle w= T_\B(\langle w,\check{w}\rangle)w-w^\sharp\check{\times}\check{w}$
\end{enumerate}
for all $w,w'$ in $\Pe$, $\check{w}$ in $\Pe^\vee$. The morphism of group sheaves
$$*_\B:\B^\times\rightarrow (\B^{\rm op})^\times$$
determines  morphisms
$$\begin{array}{l}
*_\B:{\rm Pic}_{l}\,\B=\check{H}^1(X,\B^\times)\rightarrow \check{H}^1(X,(\B^{op})^\times)={\rm Pic}_{l}\,\B^{\rm op},\\
*_\B:{\rm Pic}_{l}\,\B^{\rm op}=\check{H}^1(X,(\B^{\rm op})^\times)\rightarrow \check{H}^1(X,\B^\times)={\rm Pic}_{l}\,\B
\end{array}$$
of pointed sets.

For a locally free left $\B$-module $\Pe$ (respectively, right $\B$-module) of rank one, let $\overline{\Pe}$ denote the opposite
module of $\Pe$  with respect to the involution $*_\B$. An isomorphism of left
(respectively, right)  $\B$-modules $j:\Pe \to \overline{{\Pe^*}^{\B}}$ is called an {\it involution } on $\Pe$.
Furthermore, we canonically identify the left $\B$-homomorphisms from $\Pe$ to $\overline{\Pe^\vee}$ with the
sesquilinear forms on $\Pe$.

 A pair $(\A,\sts)$ consisting of a subsheaf of rings $\sts$
of $\sts'$ and an $\sts$-submodule $\A$ of $\B$ is called $\B$-{\it ample} if
\begin{enumerate}
\item $\sts\subset \mathcal{H}(\sts',*_{\B})$,
\item $rr^{*_\B}\in\sts$ for $r$ in $\sts'$,
\item $\A\subset \mathcal{H}(\B,*_{\B})$,
\item $1\in H^0(X,\A)$,
\item $bab^*\in \A$ for $a\in\A$, $b\in\B$,
\item $N_\B(\A)\subset\sts$; i.e., $N_\B|_\A:\A\to\sts$ is a cubic form over $\sts$,
\item $\A^{\sharp_\B}\subset \A$; i.e., ${\sharp_\B}|_\A: \A\to\A$ is a quadratic map over $\sts$.
\end{enumerate}
Let $(\A,\sts)$ be $\B$-ample and $\Pe$ be a locally free left $\B$-module of rank 1 with $N_\B(\Pe)\cong \sts'$.
\\  If $\Pe^{*_\B}\cong\Pe^\vee$ and $N_\B(\Pe)\cong\sts'$, then a pair $(N, *)$ with $N:\Pe\to\sts'$ a norm on $\Pe$
and an involution $*:\Pe\to \overline{\Pe^\vee}$ on $\Pe$ is called $\A$-{\it admissible} if
 $\langle w,w^*\rangle\in\A$ and $N_\B(\langle w,w^*\rangle)=N(w)N(w)^{*_\B}$
for $w\in\Pe$.\\
 $\Pe$ is called $\A$-{\it admissible} if there is a cubic norm $N:\Pe\to \mathcal{O}_X'$ and a nondegenerate
 $*_\B$-sesquilinear
form $h:\Pe\times\Pe\to \B$ (i.e., $h(aw,bv)=ah(w,v)b^{*_\B}$ and $h$ induces an $\B$-module isomorphism
$j_h:\Pe\to \overline{\Pe^\vee}$) such that $h(w,w)\in\A$ and $N_\B(h(w,w))=N(w)N(w)^{*_\B}$
for $w\in\Pe$.
Note that $\Pe^\vee\cong\Pe^{*_\B}$   and that therefore $j_h$ (denoted $*$ from now on) is an involution  on $\Pe$
such that $\langle w,w^*\rangle\in\A$ and $N_\B(\langle w,w^*\rangle)=N(w)N(w)^{*_\B}$
for $w\in\Pe$.

\begin{theorem} Let $(\A,\sts)$ be $\B$-ample and $\Pe$ be a locally free left $\B$-module of rank 1 with $N_\B(\Pe)\cong \sts'$
which is $\mathcal{A}$-admissible. Define
$$\widetilde{\J}=\A\oplus \Pe,$$
$$\tilde{1}=(1,0)\in H^0(X,\widetilde{\J}),$$
$$\begin{array}{r}
\widetilde{N}(a,w)=N_\B(a)+N(w)+\check{N}(w^*)-T_\B(a,\langle w,w^*\rangle)\\
=N_\B(a)+N(w)+N(w)^{*_\B}-T_\B(a,\langle w,w^*\rangle),
\end{array}$$
$$(a,w)^{\widetilde{\sharp}}=(a^{\sharp_\B}-\langle w,w^*\rangle,w^{*\check{\sharp}} -aw)$$
for $a\in \A$ and $w\in \Pe$. Then $(\widetilde{N},\widetilde{\sharp},\tilde{1})$ is a cubic form with adjoint
and base point on $\widetilde{\J}$ and
$$\widetilde{T}((a,w),(c,v))=T_\B(a,c)+T_\B(\langle w,v^*\rangle)+T_\B(\langle v,w^*\rangle)$$
for $a,c\in \A$ and $v,w\in \Pe$ is the trace.
\end{theorem}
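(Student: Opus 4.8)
The plan is to check that $(\widetilde{N},\widetilde{\sharp},\tilde{1})$ satisfies the four defining identities of a cubic form with adjoint and base point from \S1.4, and then to obtain $\widetilde{T}$ by polarizing $\widetilde{N}$ at $\tilde{1}$. Every ingredient ($N_\B,\sharp_\B,*_\B$, the norm $N$, the adjoints $\sharp,\check{\sharp}$, the pairing $\langle\,,\rangle$, the involution $*$) and every hypothesis ($\B$-ampleness of $(\A,\sts)$, the relations of \S1.5 tying together $N,\check{N},\sharp,\check{\sharp},\langle\,,\rangle$, the multiplicativity of $N$ and $\sharp$, and the $\A$-admissibility identities $\langle w,w^*\rangle\in\A$ and $N_\B(\langle w,w^*\rangle)=N(w)N(w)^{*_\B}$) is given by equations between sections that are compatible with restriction and stable under base change. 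Hence it suffices to verify the asserted identities on sections over each open subset; and since $\Pe$ is a locally free left $\B$-module of rank one, after restricting to a trivializing open set (or passing to a stalk $\mathcal{O}_{P,X}$) the construction becomes the classical Tits process of Petersson--Racine [P-R], which I would use as a guide while carrying out the verification directly. I begin with the normalizations, which are immediate: $\widetilde{N}(\tilde{1})=N_\B(1)=1$, and $(1,0)^{\widetilde{\sharp}}=(1^{\sharp_\B}-\langle 0,0\rangle,\,0^{*\check{\sharp}}-1\cdot 0)=(1,0)=\tilde{1}$, using $1^{\sharp_\B}=1$.

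The heart of the proof is the adjoint identity $x^{\widetilde{\sharp}\widetilde{\sharp}}=\widetilde{N}(x)x$. Writing $x=(a,w)$ and $(a,w)^{\widetilde{\sharp}}=(b,u)$ with $b=a^{\sharp_\B}-\langle w,w^*\rangle\in\A$ and $u=w^{*\check{\sharp}}-aw\in\Pe$, I would expand the two coordinates of $(b,u)^{\widetilde{\sharp}}=(b^{\sharp_\B}-\langle u,u^*\rangle,\,u^{*\check{\sharp}}-bu)$. In the $\A$-coordinate one linearizes $\sharp_\B$, uses $a^{\sharp_\B\sharp_\B}=N_\B(a)a$ (the $\B^+$ structure) and the compatibility $\langle w,w^*\rangle^{\sharp_\B}=\langle w^{*\check{\sharp}},w^\sharp\rangle$, while expanding $\langle u,u^*\rangle$ by sesquilinearity and invoking $w^{\sharp\check{\sharp}}=N(w)w$ together with $\langle w,w^*\rangle w=T_\B(\langle w,w^*\rangle)w-w^\sharp\check{\times}w^*$. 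In the $\Pe$-coordinate one uses $w^{*\check{\sharp}\sharp}=\check{N}(w^*)w^*$, the multiplicativity of $N$ and $\sharp$, and again the preceding trace relation. At each stage the cross terms mixing the $\A$- and $\Pe$-parts must cancel, and this is precisely where the admissibility identity $N_\B(\langle w,w^*\rangle)=N(w)N(w)^{*_\B}=N(w)\check{N}(w^*)$ is consumed; collecting the surviving scalar and recognizing it as $\widetilde{N}(a,w)$ closes the identity in both coordinates.

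It remains to treat $\widetilde{T}(x^{\widetilde{\sharp}},y)=D_y\widetilde{N}(x)$ and $\tilde{1}\times y=\widetilde{T}(y)\tilde{1}-y$, and to establish the trace formula. For the latter I would polarize: since $\widetilde{N}(\tilde{1})=1$, one has $\widetilde{T}(x,y)=\widetilde{T}(x)\widetilde{T}(y)-D_xD_y\widetilde{N}(\tilde{1})$ with $\widetilde{T}(x)=D_x\widetilde{N}(\tilde{1})$. Expanding $\widetilde{N}(\tilde{1}+sx+ty)$ and reading off the $st$-coefficient, the $N_\B$-term reproduces the trace form of $N_\B$ restricted to $\A$, namely $T_\B(a,c)$; the terms $N(w)+\check{N}(w^*)$ are cubic in the $\Pe$-direction and contribute nothing bilinear; and the coupling $-T_\B(a,\langle w,w^*\rangle)$, evaluated against the base point $1\in\A$, contributes through the sesquilinearity of $\langle\,,\rangle$ and $*$ exactly $T_\B(\langle w,v^*\rangle)+T_\B(\langle v,w^*\rangle)$, while the $\A$--$\Pe$ cross block vanishes because that coupling is linear in the $\A$-slot and quadratic in the $\Pe$-slot. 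This gives the stated orthogonal-sum shape. The identity $\widetilde{T}(x^{\widetilde{\sharp}},y)=D_y\widetilde{N}(x)$ then follows by differentiating $\widetilde{N}$ and rewriting the directional derivatives via $D_{w'}N(w)=T_\B(\langle w',w^\sharp\rangle)$ and $D_{\check{w}'}\check{N}(\check{w})=T_\B(\langle\check{w}^{\check{\sharp}},\check{w}'\rangle)$, matching them against the trace pairing of $x^{\widetilde{\sharp}}$ with $y$, and $\tilde{1}\times y=\widetilde{T}(y)\tilde{1}-y$ is read off from $\widetilde{\sharp}$ by linearization. The main obstacle I anticipate is the bookkeeping in the double-adjoint expansion: one must keep the left/right $\B$-module structures and the $*_\B$-semilinearity of $*$ straight, so that every mixed term is rewritten by exactly one of the relations of \S1.5 and no residual term survives. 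Once the conventions are fixed this is mechanical, but it is the only place where a side or sign error could hide, so I would carry it out coordinate by coordinate rather than cite the ring-theoretic process as a black box.
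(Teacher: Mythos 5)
The paper itself contains no proof of this theorem: it is imported verbatim from Achhammer's thesis, with the reader referred to [Ach1] and [Pu2] for details, so there is no in-paper argument to compare yours against. That said, your strategy is exactly the standard one behind the cited result: all four axioms of a cubic form with adjoint and base point are identities between sections, hence may be checked locally, where a trivialization $\Pe\cong\B$ turns the construction into the classical Tits process of [P-R]; your normalization checks and your polarization of $\widetilde{N}$ at $\tilde{1}$ (correctly showing that the purely cubic terms $N(w)+\check{N}(w^*)$ contribute nothing bilinear and that the coupling term yields $T_\B(\langle w,v^*\rangle)+T_\B(\langle v,w^*\rangle)$) are right, and the relations you invoke from \S 1.5 ($w^{\sharp\check{\sharp}}=N(w)w$, $\langle w,\check{w}\rangle^{\sharp_\B}=\langle\check{w}^{\check{\sharp}},w^\sharp\rangle$, $\langle w,\check{w}\rangle w=T_\B(\langle w,\check{w}\rangle)w-w^\sharp\check{\times}\check{w}$, and the admissibility identity $N_\B(\langle w,w^*\rangle)=N(w)N(w)^{*_\B}$) are precisely the ones the computation consumes. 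The only reservation is that the central step, the expansion of $(a,w)^{\widetilde{\sharp}\widetilde{\sharp}}$ and the cancellation of the mixed $\A$--$\Pe$ terms, is described rather than carried out; as you yourself note, that bookkeeping is where any error would hide, so as written this is a correct and well-targeted plan rather than a complete verification.
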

 The induced Jordan algebra $\widetilde{\J}(\widetilde{N},\widetilde{\sharp},\tilde{1})$
is denoted by $\mathcal{J}(\B,\A,\Pe,N,*)$ and called the {\it  Tits process}.
It generalizes the classical Tits process over rings  given in [P-R].

 Let $\J=\mathcal{J}(N_\J,\sharp_\J,1)$ be an Albert algebra over $X$ which contains a
subalgebra of the kind $\h(\B,*_\B)$, where $\B$ is an Azumaya algebra over $\mathcal{O}_X'$ of constant rank 9
together with an involution $*_\B$ and $\sts'$ an $\sts$-algebra of constant rank 2 with $\h(\sts',*_\B)=\sts$.
Then there exists an $\h(\B,*_\B)$-admissible left $\B$-module $\Pe$ of rank 1 and a $\h(\B,*_\B)$-admissible
pair $(N,*)$ such that the canonical embedding $\h(\B,*_\B)  \hookrightarrow \J$ can be extended to
an isomorphism $$\mathcal{J}(\B,\h(\B,*_\B),\Pe,N,*)\to \J.$$

We will not always get all possible cubic Jordan algebras over $X$ using the generalized Tits process,
see [P-S-T2] for examples.

\subsection{The first Tits construction}  (due to Achhammer [Ach1, 2], see [Pu2] for details.)
 Let $\A$ be a unital associative $\sts$-algebra, $(N_\A,\sharp_\A,1)$ a cubic form with
adjoint and base point on $\A$, $\Pe\in {\rm Pic}_l\,\A$ such that $N_\A(\Pe)\cong\sts$ and $N$ a cubic
norm on $\Pe$. Let $\A^+=\mathcal{J}(N_\A,\sharp_\A,1)$ and $N_\A(xy)=N_\A(x)N_\A(y)$ for all $x,y\in\A$. Define
$$\begin{array}{l}
\widetilde{\J}=\A\oplus \Pe\oplus\Pe^\vee,\\
\widetilde{1}=(1,0,0)\in H^0(X,\J),\\
\widetilde{N}(a,w,\check{w})=N_\A(a)+N(w)+\check{N}(\check{w})-T_\A(a,\langle w,\check{w}\rangle)\\
(a,w,\check{w})^{\widetilde{\sharp}}=(a^{\sharp_\A}-\langle w,\check{w}\rangle, \check{w}^{\check{\sharp}}-
aw, w^\sharp-\check{w}a)
\end{array}$$
for $a\in\A$, $w\in\Pe$, $\check{w}\in\Pe^\vee$, then $(\widetilde{N},\widetilde{\sharp},\widetilde{1})$
is a cubic form with adjoint and base point on $\widetilde{\J}$ and has trace form
$$\widetilde{T}((a,w,\check{w}),(c,v,\check{v}))=T_\A(a,c)+T_\A(\langle w, \check{v}\rangle)+T_\A(\langle v,
\check{w}\rangle).$$
The induced Jordan algebra $\mathcal{J}(\widetilde{N},\widetilde{\sharp},\widetilde{1})$ is denoted by $\mathcal{J}(\A,\Pe,N)$
and called a {\it (generalized) first Tits construction} [Ach1, 2].
This first Tits construction generalizes the  {\it classical} first Tits construction from [P-R]. $\A^+$ identifies canonically with a subalgebra of
$\mathcal{J}(\A,\Pe,N)$.\\
If
$$\begin{array}{l} \sts'=\sts\oplus\sts, \, (r,s)^{*_\B}=(s,r),\\
\B=\A\oplus\A^{\rm op}, \, (a,c)^{*_\B}=(c,a),\, 1_\B=(1,1),\\
N_\B(a,c)=(N_\A(a),N_\A(c)), \, (a,c)^{\sharp_\B}=(a^{\sharp_\A},c^{\sharp_\A}),\\
\sts^0=\{(r,r)\,|\,r\in\sts\},\, \A^0=\{(a,a)\,|\,a\in\A\},\\
\Pe^0=\Pe\oplus\Pe^\vee,\\
N^0(w,\check{w})=(N(w),\check{N}(\check{w})),\, (w,\check{w}^{*^0})=(\check{w},w),
\end{array}$$
then
$$\mathcal{J}(\B,\A^0,\Pe^0,N^0,*^0)\cong \mathcal{J}(\A,\Pe,N).$$
For $P\in X$, $$\widetilde{\J}(\A,\Pe,N)_P\cong \widetilde{\J}(\A_P,\Pe_P,N_P)$$
where the right hand side is a  classical first Tits construction over $\mathcal{O}_{P,X}$ in the sense of [P-R].
If $\J=\mathcal{J}(N_\J,\sharp_\J,1)$ is an Albert algebra over $X$ containing a subalgebra of the kind $\A^+$ with $\A$ an
Azumaya algebra over $X$, then there exist suitable $\Pe$ and $N$ such that $\mathcal{J}(\A,\Pe,N)\cong \J$.

The  Tits process can be embedded into a first general first Tits construction, analogously as shown
in [P-R], [M1], cf.  [Ach1, 2.28] or [Pu2].

\section{Curves of genus zero}

Let $X$ be a curve of genus zero; i.e. a geometrically integral, complete, smooth scheme of dimension one over $k$.
Let $P_0\in X$ be a closed point of minimal degree and $\lb(mP_0)$ the  line bundle over $X$ associated with the divisor
$mP_0$. The isomorphism $\mathbb{Z}\cong {\rm Pic}\,X$ is given by the map $m\to \lb(mP_0)$.
If $X$ is rational, $P_0$ has degree 1 and $\lb(mP_0)\cong \mathcal{O}_X(m)$.
In that case let  ${\bf h}(m)$ denote the hyperbolic plane given by the quadratic form
$(a,b)\to ab$ on $ \mathcal{O}_X(m)\oplus \mathcal{O}_X(-m)$.
If $X$ is nonrational, let $D_0$ be the quaternion division algebra
associated to $X$.
 If $k'/k$ is a finite separable field extension which is a maximal subfield
 of $D_0$, then we let $X'=X\times_k k'$ and have $X'\cong\mathbb{P}^1_{k'}$. In that case, let $\mathcal{E}_0
 =tr_{k'/k}( \mathcal{O}_{X'}(1))$
 be the indecomposable vector bundle of rank 2 with $D_0={\rm End}(\mathcal{E}_0)$.

\begin{example} Using Petersson's classification of the composition algebras over $X$ [P], we can list
 all the Jordan  algebras of the kind $\mathcal{H}_3(\mathcal{C},\Gamma)$ with $\mathcal{C}$
a composition algebra or 0 (the first 4 types are defined over $k$):
\begin{enumerate}
\item $\sts^+$, if $k$ has characteristic not 3;
\item $\J=\mathcal{O}_X^+\times\mathcal{O}_X^+\times\mathcal{O}_X^+$;

\item $\mathcal{H}_3(k\times k,\Gamma)\otimes_k \sts$,
 in particular, $\mathcal{H}_3(k\times k)\otimes_k \sts\cong{\rm Mat}_3(k)^+\otimes_k \sts$;

\item $\mathcal{H}_3(C_0,\Gamma_0)\otimes_k \sts$ with $C_0$ a composition division algebra over $k$;
\item $\mathcal{H}_3(\quat,\Gamma)$ with $\quat$ a split quaternion algebra; i.e., the underlying module structure
of $\mathcal{H}_3(\quat,\Gamma)$ is
given by $\mathcal{O}^9_X\oplus  \mathcal{L}(mP_0)^3\oplus \mathcal{L}(-mP_0)^{3}$ with $m\in\mathbb{Z}$ arbitrary;
\item $\mathcal{H}_3(\mathcal{C},\Gamma)$ with $\mathcal{C}$ a split octonion algebra over $k$. Thus the underlying
module structure of $\mathcal{H}_3(\mathcal{C},\Gamma)$ is given by
$$\mathcal{O}^9_X\oplus \lb(m_1P_0)^3\oplus \lb(m_2P_0)^3\oplus  \lb(-(m_1+m_2)P_0)^3
\oplus \lb(-m_1P_0)^3\oplus \lb(-m_2P_0)^3\oplus  \lb((m_1+m_2)P_0)^3 $$
 for some $m_1\geq m_2\geq 0$, $m_1>0$, or, if $X$ is not rational, by
$$\mathcal{O}^9_X\oplus \lb(-(2m+1)P_0)^3\oplus [\lb(mP_0)\otimes\mathcal{E}_0]^3\oplus \lb((2m+1)P_0)^3\oplus
[\lb(-mP_0)\otimes\check{\mathcal{E}_0}]^3,$$
the integer  $m$ being uniquely determined by $\mathcal{C}$;
\item if $X$ is nonrational, $\mathcal{H}_3(\mathcal{C},\Gamma)$ with
$\mathcal{C}\cong {\rm Cay} (\mathcal{D},\mathcal{P}, N_0)$,
where $\mathcal{D}=D_0\otimes \mathcal{O}_X$ and $\mathcal{P}=\mathcal{P}_1\oplus \mathcal{P}_2$ with
$\mathcal{P}_1=\mathcal{L}(mP_0)\otimes \check{{\mathcal{E}}}_0$ and
$\mathcal{P}_2=\mathcal{L}((-m+1)P_0)\otimes \check{\mathcal{E}}_0$ for some integer $m\geq 0$ uniquely
determined by $\mathcal{C}$ [P, 4.9]. Thus the underlying module structure of $\mathcal{H}_3(\mathcal{C},\Gamma)$ is given by
$$\mathcal{O}^{15}_X\oplus  [\mathcal{L}(mP_0)\otimes \check{{\mathcal{E}}}_0]^3\oplus [\mathcal{L}((-m+1)P_0)
\otimes \check{\mathcal{E}}_0]^3 .$$
\end{enumerate}
For the definition of $\mathcal{H}_3(\mathcal{C},\Gamma)$ over $X$, see for instance [Pu2, Example 1].
\end{example}

\begin{remark} (i) (imitating [P, 5.5]) Let $X=\mathbb{P}_k^1$ and let $\sigma:X\to k$ denote the structure morphism of $X$.
 Let $\J$ be a  Jordan algebra of (automatically constant) rank
  with nondegenerate quadratic trace form $T_\J$
over $X$. By Knebusch's Theorem [Kne, Theorem 13.2.2] we can decompose the quadratic space $(\J,T_\J)$ as
$$(\J,T_\J)={\bf h}(m_1)\perp\dots\perp {\bf h}(m_r)\perp \sigma^*(V,q)$$
with $m_1\geq\dots\geq m_r >0$ integers. For the global sections we get
$$J=H^0(X,\J)=H^0(X,\sts(m_1))\perp\dots\perp H^0(X,\sts(m_1))\perp V$$
and $1_J=\sum_{j=1}^rg_j+e$ for $g_j\in k[x_0,x_1]$ homogeneous of degree $m_j$, $e\in V$.
Hence $q(e)=T_\mathcal{J}(1_J)=3$. So $V$ has dimension at least one
if $\J$ has odd rank and at least 2 if it has even rank.
Thus, if $\J$ has rank 3, either $\J$ is globally free as $\sts$-module or
$\J\cong {\bf h}(m)\perp\sts$.
\\(ii) Let $X$ be nonrational. Let $\J$ be as in (i). Let $k'$ be a splitting field of $X$ and let $X'=X\times_kk'$. By (i) applied to
$\J\otimes\mathcal{O}_{X'}$ (or by a direct argument involving module structures
of selfdual modules), if $\J$ has rank 3, it is either globally free
 or $\J\cong \lb(sP_0)\oplus \lb(-sP_0)\oplus \sts$ for some $s>0$.
\end{remark}

\begin{lemma}  (i) If $k$ has characteristic not 2 or 3, every first Tits construction over $X$ starting with $\mathcal{O}_X$
 is defined over $k$.\\
 (ii) Let $l/k$ be a separable  quadratic field extension with ${\rm Gal}(l/k)=\langle \omega\rangle$.
 Let $X_l=X\times_k l$. Every  Tits process over $X$ starting with $\B=\mathcal{O}_{X_l}$ and $*_\B=\omega$
 is defined over $k$.
\end{lemma}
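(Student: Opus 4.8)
The plan is to show, in both parts, that the invertible module $\Pe$ feeding the construction is forced to be trivial; once $\Pe\cong\sts$ (resp.\ $\Pe\cong\mathcal{O}_{X_l}$), all remaining data are governed by global scalars over the ground field, and the construction is visibly the base change of a classical construction over $k$.

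\emph{Part (i).} A first Tits construction starting with $\sts$ has the shape $\J(\sts,\Pe,N)$ with $\Pe\in\Pic_l\,\sts=\Pic\,X$ subject to $N_\sts(\Pe)\cong\sts$, together with a cubic norm $N$ on $\Pe$. The norm of the rank-one algebra $\sts$ is the cube map $x\mapsto x^3$, so the induced morphism of group sheaves $N_\sts$ is multiplication by $3$ on $\Pic\,X$. Since $\Pic\,X\cong\mathbb{Z}$ (Section 2) is torsion free, the requirement $N_\sts(\Pe)\cong\sts$, i.e.\ $\Pe^{\otimes 3}\cong\sts$, forces $\Pe\cong\sts$. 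Fixing a trivialization, the multiplicative cubic norm becomes $N(w)=\mu w^3$ for a global section $\mu$; as $X$ is complete and geometrically integral we have $H^0(X,\sts)=k$, and nondegeneracy gives $\mu\in k^\times$. The adjoints and the pairing $\langle\,,\,\rangle$ are then the base changes to $\sts$ of the corresponding $k$-data, so by the explicit formulas of Section 1.6 one obtains $\J(\sts,\sts,N)\cong\J(k,k,N_0)\otimes_k\sts$ with $N_0(w)=\mu w^3$, the classical first Tits construction $\J(k,k,N_0)$ being available because $\mathrm{char}\,k\neq 2,3$. Hence the algebra is defined over $k$.

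\emph{Part (ii).} Write $\pi\colon X_l\to X$ for the projection; then $\B=\mathcal{O}_{X_l}$ is identified with the quadratic \'etale $\sts$-algebra $\sts'=\pi_*\mathcal{O}_{X_l}=\sts\otimes_k l$, carrying the Galois involution $*_\B=\omega$ with $\h(\sts',\omega)=\sts$. Because $\B$ has rank one over $\sts'$, any $\B$-ample pair with base $\sts$ is forced to be $(\A,\sts)=(\sts,\sts)$, since $\A\subset\h(\B,\omega)=\sts$ is a unital $\sts$-submodule of $\sts$. Now $X_l=X\times_k l$ is again a geometrically integral, smooth, complete curve of genus zero, this time over $l$, so $\Pic\,X_l\cong\mathbb{Z}$ is torsion free by the same fact, and rank-one locally free left $\B$-modules correspond to $\Pic\,X_l$. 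Exactly as in (i), $N_\B$ induces multiplication by $3$, so $N_\B(\Pe)\cong\sts'$ forces $\Pe\cong\mathcal{O}_{X_l}$, which in turn gives $\Pe^\vee\cong\Pe^{*_\B}\cong\mathcal{O}_{X_l}$ for free. After trivializing $\Pe$, the norm $N$ and the involution $*$ (equivalently the $*_\B$-sesquilinear form $h$) are described by global scalars lying in $H^0(X_l,\mathcal{O}_{X_l})=l$, and the admissibility relations $\langle w,w^*\rangle\in\sts$ and $N_\B(\langle w,w^*\rangle)=N(w)N(w)^{*_\B}$ become precisely the defining relations of a classical admissible pair for the field extension $l/k$. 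Therefore $\J(\B,\sts,\Pe,N,*)\cong J_0\otimes_k\sts$ for the classical Tits process $J_0=\J(l,k,l,N_0,*_0)$ over $k$, so the algebra is defined over $k$.

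\emph{Main obstacle.} The Picard-group computation that trivializes $\Pe$ is immediate; the genuine work is the descent of the structure maps. The step I expect to be the main obstacle is verifying, after the trivialization of $\Pe$, that the norm, the adjoints $\sharp,\check\sharp$, the pairing $\langle\,,\,\rangle$ and (in (ii)) the involution are all governed by scalars over the ground field and assemble into genuine $k$-rational admissible data, so that the formulas for $(\widetilde{N},\widetilde{\sharp},\tilde1,\widetilde{T})$ of Theorem 1 are manifestly the base change from $k$. This is bookkeeping compatible with $\omega$ rather than a conceptual difficulty, the essential inputs being the torsion-freeness of the Picard group and $H^0(X,\sts)=k$ (resp.\ $H^0(X_l,\mathcal{O}_{X_l})=l$).
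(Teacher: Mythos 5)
Your proposal is correct, and part (i) is essentially the paper's argument: ${\rm Pic}\,X\cong\mathbb{Z}$ has no $3$-torsion, so $\mathcal{L}\cong\sts$ and the construction is $\mathcal{J}(k,\mu)\otimes_k\sts$. In part (ii) you reach the same conclusion by a slightly different (and arguably cleaner) route. The paper trivializes $\Pe$ using the involution half of admissibility: for $X=\mathbb{P}^1_k$ every line bundle on $X_l$ is defined over $X$, so $^\omega\Pe\cong\Pe$, and $^\omega\Pe\cong\Pe^\vee$ then forces $\Pe\cong\Pe^\vee$, hence $\Pe\cong\mathcal{O}_{X_l}$; for nonrational $X$ the paper does not argue on ${\rm Pic}\,X_l$ at all but instead base-changes to a splitting field $k'$ of $D_0$, reduces to the rational case, and concludes via [Pu2, Lemma 1] that a globally free Tits process is defined over $k$. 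You instead use the cubic-norm half of admissibility, $N_\B(\Pe)\cong\mathcal{O}_{X_l}$, i.e.\ $\Pe^{\otimes 3}\cong\mathcal{O}_{X_l}$, and the torsion-freeness of ${\rm Pic}\,X_l\cong\mathbb{Z}$; this kills $\Pe$ uniformly for rational and nonrational $X$ without the case split or the auxiliary splitting field. You also carry out the descent of the structure data $(N,h)$ explicitly in terms of global scalars in $H^0(X_l,\mathcal{O}_{X_l})=l$, where the paper simply invokes [Pu2, Lemma 1] ("globally free as $\sts$-module implies defined over $k$"); your bookkeeping (e.g.\ $h(w,v)=cwv^{\omega}$ with $c\in k^\times$ and $c^3=N_{l/k}(\mu)$) is the content of that lemma in this special case, so both versions are complete. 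The only thing worth flagging is that your descent step, which you correctly identify as the main obstacle, is exactly what the citation to [Pu2, Lemma 1] is packaging; if you do not want to rely on that reference you should state the trivialization of $h$ and $N$ by global scalars as a short separate verification rather than an assertion.
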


\begin{proof} (i) Every first Tits construction  over $X$ starting with $\mathcal{O}_X$ is of the kind
$\mathcal{J}(\mathcal{O}_X, \mathcal{L}, N_{\mathcal{L}})$ with $\mathcal{L} \in \, _3 {\rm Pic} X$. However,
 ${\rm Pic}\,X\cong\mathbb{Z}$, so we only have $\mathcal{J} (\mathcal{O}_X,\mathcal{O}_X, \mu)\cong
 \mathcal{J}(k,\mu)\otimes_k \sts$, for some $\mu\in k^\times$.\\
 (ii) Let $X=\mathbb{P}_k^1$. If $\Pe\in{\rm Pic}\,X_l$ is $\sts$-admissible then $\Pe=\mathcal{O}_{X_l}$
 (each $\Pe$ is already defined over $X$, so $^\omega\Pe\cong\Pe$ and thus $^\omega\Pe\cong\Pe^\vee$ implies
 $\Pe\cong\Pe^\vee$). Therefore
 $\J=\mathcal{J}(\mathcal{O}_{X_l},\sts,\Pe,N,*)$ is globally free as $\sts$-module implying that it is defined over $k$
  [Pu2, Lemma 1].\\
 If $X$ is nonrational and $\J$ is a  Tits process over $X$ then, given any splitting field $k'$ of the quaternion
 division algebra associated with $X$, $\J\otimes \mathcal{O}_{X'}$ is  a Tits process over
$\mathcal{O}_{X'}$ and so defined over $k'$. Hence $\J$ is globally free as $\sts$-module implying that it is defined over
$k$ [Pu2, Lemma 1].
\end{proof}

We cannot exclude the possibility that there are Jordan algebras $\mathcal{J}(N,\sharp,1)$ of degree 3 over a curve of
 genus zero which do not arise from
a first Tits construction or a Tits process. By Remark 1, the underlying module structure of such algebras
must be $\sts\oplus \lb(mP_0)\oplus\lb(-mP_0)$ ($m>0$).

\subsection{\'Etale first Tits constructions over $X$}

Let $\A$ be a commutative associative $\sts$-algebra of constant rank 3 such that $\A^+=\mathcal{J}(N_\A,\sharp_\A,1)$
and where $\A(P)$ is a cubic
\'etale $k(P)$-algebra for all $P\in X$. The first Tits construction $\J=\mathcal{J}(\A,\Pe,N)$ is called an {\it
\'etale first Tits construction} [Pu2, 4.1].

\begin{lemma} Let $\A=\sts\times\sts\times\sts$. The \'etale first Tits construction
$\mathcal{J}(\A,\Pe,N)$ is a Jordan algebra over $X$ of rank 9 with underlying module structure
$$\sts^3\oplus \lb(m_1P_0)\oplus \lb(m_2P_0)\oplus \lb(-(m_1+m_2)P_0)\oplus \lb(-m_1P_0)\oplus
\lb(-m_2P_0)\oplus \lb((m_1+m_2)P_0).$$
\end{lemma}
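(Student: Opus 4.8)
The plan is to read the module structure straight off the definition $\widetilde{\J}=\A\oplus\Pe\oplus\Pe^\vee$ of the first Tits construction, so that the entire problem reduces to identifying $\Pe$ and $\Pe^\vee$ as $\sts$-modules. First I would record that $\A=\sts\times\sts\times\sts$, equipped with the norm $N_\A(r_1,r_2,r_3)=r_1r_2r_3$, is a commutative associative $\sts$-algebra of constant rank $3$ all of whose fibres $\A(P)=k(P)^3$ are cubic \'etale; hence $\mathcal{J}(\A,\Pe,N)$ is an \'etale first Tits construction and, by the construction recalled in Section~1, a Jordan algebra over $X$. Its rank is $3+3+3=9$, since $\Pe$ and $\Pe^\vee$ are locally free of rank $1$ over $\A$, hence of rank $3$ over $\sts$; and the summand $\A$ already supplies the $\sts^3$ appearing in the statement.

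Next I would compute $\Pe$. As $\A$ is a product of three copies of $\sts$ cut out by orthogonal idempotents, every locally free left $\A$-module of rank $1$ splits accordingly, giving $\Pic_l\A\cong(\Pic X)^3$ and $\Pe\cong\lb(a_1P_0)\oplus\lb(a_2P_0)\oplus\lb(a_3P_0)$ for integers $a_1,a_2,a_3$, using $\Pic X\cong\mathbb{Z}$. The key point is to identify the induced norm map on Picard groups: since $N_\A$ is the product of the three components, it is of degree one in each, so $N_\A(\Pe)\cong\lb(a_1P_0)\otimes\lb(a_2P_0)\otimes\lb(a_3P_0)\cong\lb((a_1+a_2+a_3)P_0)$. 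The hypothesis $N_\A(\Pe)\cong\sts$ is therefore equivalent to $a_1+a_2+a_3=0$. Writing $m_1=a_1$ and $m_2=a_2$ gives $a_3=-(m_1+m_2)$, and hence $\Pe\cong\lb(m_1P_0)\oplus\lb(m_2P_0)\oplus\lb(-(m_1+m_2)P_0)$.

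The dual $\Pe^\vee=\mathcal{H}om_\A(\Pe,\A)$ is then computed componentwise: on the $i$th factor it is $\lb(a_iP_0)^{-1}=\lb(-a_iP_0)$, so that $\Pe^\vee\cong\lb(-m_1P_0)\oplus\lb(-m_2P_0)\oplus\lb((m_1+m_2)P_0)$. Substituting $\A$, $\Pe$ and $\Pe^\vee$ into $\A\oplus\Pe\oplus\Pe^\vee$ reproduces exactly the asserted module structure.

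The step I expect to be the main obstacle is the careful identification of the norm map $\Pic_l\A\to\Pic X$ together with the verification that the constraint $N_\A(\Pe)\cong\sts$ is precisely what permits a cubic norm $N$ on $\Pe$ (as required for the construction to be defined) to exist. Over a genus-zero curve this is manageable, since $\Pic X\cong\mathbb{Z}$ and every relevant module decomposes into line bundles, so all of these computations reduce to the split, componentwise case once the idempotent decomposition of $\A$-modules is in place.
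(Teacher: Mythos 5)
Your argument follows the paper's proof essentially verbatim: the paper likewise reduces to the classification $\Pe\cong\lb(m_1P_0)\oplus\lb(m_2P_0)\oplus\lb(-(m_1+m_2)P_0)$ (quoted there from [Ach1, 4.1] and [Pu2, Example 8], which your idempotent decomposition of $\A$-modules simply unpacks) and then reads the module structure off $\A\oplus\Pe\oplus\Pe^\vee$. The one point you flag but leave implicit --- the existence of a cubic norm once $N_\A(\Pe)\cong\sts$ --- is settled in the paper in a single line by choosing an isomorphism $\alpha:\lb(m_1P_0)\otimes\lb(m_2P_0)\otimes\lb(-(m_1+m_2)P_0)\to\sts$ and setting $N(x,y,z)=\alpha(x\otimes y\otimes z)$.
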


\begin{proof} Every left $\A$-module $\Pe$ of rank one with $N_\A(\Pe)\cong\sts$ satisfies
$$\Pe\cong \lb(m_1P_0)\oplus \lb(m_2P_0)\oplus \lb(-(m_1+m_2)P_0)$$
with $m_i\in\mathbb{Z}$ ([Ach1, 4.1], see also [Pu2, Example 8]).
Choose an isomorphism $\alpha:\lb(m_1P_0)\otimes \lb(m_2P_0)\otimes \lb(-(m_1+m_2)P_0)\to\sts$, then $N(x,y,z)=\alpha(x\otimes y\otimes z)$
defines a norm on $\Pe$ and $\mathcal{J}(\A,\Pe,N)$ has the underlying module structure claimed in the assertion.
\end{proof}

\begin{lemma} Let $\A=k'\otimes \sts$ with $k'$ a cubic  \'{e}tale  field extension of $k$.
 Then there exists only the classical  \'etale first Tits construction for $\A$ which yields a Jordan algebra,
which is defined over $k$.
\end{lemma}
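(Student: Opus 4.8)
The plan is to recast the classification of admissible modules geometrically. Since $\A=k'\otimes_k\sts$ is the direct image $\pi_*\sts$ of the structure sheaf along the finite morphism $\pi\colon X_{k'}=X\times_k k'\to X$ of degree $[k':k]=3$, and since $\pi$ is affine, the functor $\pi_*$ identifies invertible $\sts$-modules on $X_{k'}$ with rank-one locally free left $\A$-modules; that is, $\Pic_l\,\A\cong\Pic X_{k'}$. I would first record that $X_{k'}$ is again a smooth complete geometrically integral curve of genus zero, now over $k'$, because base change along the separable (indeed \'etale) extension $k'/k$ preserves smoothness, properness, geometric integrality and the genus. Hence $\Pic X_{k'}\cong\mathbb{Z}$; in particular it is infinite cyclic and torsion free, and $\Pic^0 X_{k'}=0$, so a line bundle on $X_{k'}$ is already trivial once it has degree zero. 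Thus the only data one needs to control for a first Tits construction $\mathcal{J}(\A,\Pe,N)$ is the module $\Pe\in\Pic X_{k'}$ subject to the hypothesis $N_\A(\Pe)\cong\sts$.

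The heart of the argument is to identify $N_\A(\Pe)\cong\sts$ with the condition that $\Pe$ lie in the kernel of the norm (transfer) homomorphism $N\colon\Pic X_{k'}\to\Pic X$ induced by $N_\A\colon\A^\times\to\sts^\times$, and then to show that this homomorphism is injective. Injectivity I would deduce from the norm--restriction identity $N\circ\pi^*=[3]$ on $\Pic X\cong\mathbb{Z}$, multiplication by the degree $[k':k]=3$: since $\pi^*$ is nonzero and $[3]$ is injective on $\mathbb{Z}$, the map $N$ is a nonzero homomorphism $\mathbb{Z}\to\mathbb{Z}$ and hence injective. (Equivalently, the norm preserves the degree over $k$, and a degree-zero bundle on the genus-zero curve $X_{k'}$ is trivial.) Consequently $N_\A(\Pe)\cong\sts$ forces $\Pe\cong\A$, i.e. the admissible module is free of rank one.

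With $\Pe\cong\A$ generated by $1$, multiplicativity $N(bw)=N_\A(b)N(w)$ gives $N=\mu\,N_\A$ with $\mu=N(1)\in H^0(X,\sts^\times)=k^\times$, the last equality because $X$ is complete and geometrically integral. Therefore $\mathcal{J}(\A,\Pe,N)\cong\mathcal{J}(\A,\A,\mu N_\A)$ is the classical \'etale first Tits construction, and as $\A=k'\otimes_k\sts$, $\Pe=\A$ and $N=\mu N_\A$ are all obtained by base change it equals $\mathcal{J}(k',\mu)\otimes_k\sts$ and is defined over $k$, exactly as in Lemma~2(i). The step I expect to be the main obstacle is the injectivity of the norm map on Picard groups; everything turns on $\Pic X_{k'}$ being infinite cyclic, which is precisely the genus-zero input, together with the relation $N\circ\pi^*=[3]$ coming from the degree of $\pi$.
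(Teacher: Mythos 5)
Your proof is correct and takes essentially the same route as the paper: both identify $\Pic_l\,\A$ with $\Pic X'\cong\mathbb{Z}$ and use the relation $N_\A(\pi^*\lb)\cong\lb^{3}$ to force the admissible module $\Pe$ to be trivial, so that the construction is the classical one $\mathcal{J}(k',\mu)\otimes_k\sts$ defined over $k$. The only cosmetic difference is that you package the conclusion as injectivity of the norm homomorphism on Picard groups rather than, as the paper does, as bijectivity of the pullback $\Pic X\to\Pic X'$ followed by the degree count, and you make explicit the final step $N=\mu N_\A$ with $\mu\in k^\times$, which the paper leaves implicit.
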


\begin{proof} Let $X'=X\times_k k'$.  Identify ${\rm Pic}\,\A={\rm Pic}\,X'$.
 The canonical morphism
 $$G: {\rm Pic}\,X\to {\rm Pic}\,\A,\,\lb(mP_0)\to \lb(mP_0)\otimes \mathcal{O}_{X'}$$
is bijective. Therefore every $\Pe\in {\rm Pic}\,\A$ is of the form
 $\Pe\cong (k'\otimes \sts)\otimes \lb(mP_0)$ and if $N$ is the norm on $k'\otimes_k \sts$, then
$$\sts\cong N(\Pe)\cong N((k'\otimes \sts)\otimes \lb(mP_0))\cong\lb(3mP_0)$$
implies $m=0$. This yields the assertion.
\end{proof}

\begin{lemma} Let $\torus$ be a non-split quadratic \'etale algebra over $X$, that means
$\torus=k'\otimes \sts$ with $k'/k$ a separable quadratic field extension of $k$.  Let $\A=\sts\times\torus$.
Then $\mathcal{J}(\A,\Pe,N)$ is a Jordan algebra over $X$ of rank 9 with underlying module structure\\
 $$\J\cong\sts^3\oplus [ \lb(-2mP_0)\oplus \lb(mP_0)\oplus\lb(mP_0)]\oplus  [\lb(2mP_0)\oplus \lb(-mP_0)\oplus\lb(-
mP_0)]$$  or, if $X$ is nonrational and $k'$ a splitting field of $X$,
$$\J\cong\sts^3\oplus   [\lb(-(2m+1)P_0)\oplus tr_{k'/k}(\mathcal{O}_{X'}(2m+1))] \oplus
[\lb(2m+1P_0)\oplus tr_{k'/k}(\mathcal{O}_{X'}(-2m-1))].$$
\end{lemma}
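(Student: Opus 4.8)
The plan is to follow the two preceding étale cases, exploiting that $\A=\sts\times\torus$ is a product of commutative $\sts$-algebras. For such a product $\Pic\A=\Pic X\times\Pic\,\torus$, so by the classification of rank-one $\A$-modules already used in the case $\A=\sts\times\sts\times\sts$ (cf. [Ach1, 4.1]) the orthogonal idempotents $(1,0),(0,1)$ split every admissible $\Pe$ as $\Pe=\Pe_1\oplus\Pe_2$ with $\Pe_1\in\Pic X$ a line bundle and $\Pe_2\in\Pic\,\torus=\Pic X'$, where $X'=X\times_kk'$ is the relative spectrum of $\torus$. Since $X$ has genus zero, $\Pic X\cong\mathbb{Z}$ via $\lb(mP_0)$, so $\Pe_1\cong\lb(aP_0)$; and since $k'$ splits $X$ (automatic if $X$ is rational, assumed otherwise) one has $X'\cong\mathbb{P}^1_{k'}$ and $\Pic X'\cong\mathbb{Z}$, so $\Pe_2\cong\mathcal{O}_{X'}(c)$. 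The cubic norm factors as $N_\A(a_1,a_2)=a_1\,N_{\torus/\sts}(a_2)$, whence the admissibility condition reads $N_\A(\Pe)\cong\Pe_1\otimes N_{X'/X}(\Pe_2)\cong\sts$; exactly as for $\A=\sts\times\sts\times\sts$, a cubic norm $N$ on $\Pe$ then exists and is obtained from any trivialising isomorphism.

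Next I would pin down the relation between $a$ and $c$, and here the degree of $P_0$ is decisive. Let $f\colon X'\to X$ be the projection, finite étale of degree $2$; the norm map $N_{X'/X}\colon\Pic X'\to\Pic X$ satisfies $N_{X'/X}(f^*L)=L^{\otimes2}$. If $X$ is rational then $P_0$ has degree one and $f^*\lb(P_0)=\mathcal{O}_{X'}(1)$, so $f^*$ is an isomorphism, every $\Pe_2=\mathcal{O}_{X'}(b)$ is a pullback, and $N_{X'/X}(\mathcal{O}_{X'}(b))=\lb(2bP_0)$; admissibility forces $a=-2b$. If $X$ is nonrational then $P_0$ has degree two and $f^*\lb(P_0)=\mathcal{O}_{X'}(2)$, so $f^*\Pic X=2\,\Pic X'$; evaluating $N_{X'/X}$ on the generator via $N_{X'/X}(\mathcal{O}_{X'}(2))=\lb(2P_0)$ gives $N_{X'/X}(\mathcal{O}_{X'}(c))=\lb(cP_0)$, and admissibility forces $a=-c$.

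Finally I would read off the underlying $\sts$-module by pushing forward along $f$, which is where the two normal forms separate. As $\torus\cong\sts^2$ one has $f_*\mathcal{O}_{X'}\cong\sts^2$, so by the projection formula every even-degree (pulled-back) bundle descends to a split sum, $f_*\mathcal{O}_{X'}(2d)\cong\lb(dP_0)^2$. In the rational case all $\Pe_2$ are of this type; setting $m=b$ gives $\Pe\cong\lb(-2mP_0)\oplus\lb(mP_0)^2$, and since $f$ is étale the $\sts$-dual $\Pe^\vee$ dualises termwise to $\lb(2mP_0)\oplus\lb(-mP_0)^2$, so $\J=\A\oplus\Pe\oplus\Pe^\vee$ with $\A\cong\sts^3$ yields the first claimed structure. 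In the nonrational case an odd-degree $\mathcal{O}_{X'}(2m+1)$ is not a pullback and descends to the indecomposable rank-two bundle $tr_{k'/k}(\mathcal{O}_{X'}(2m+1))\cong\lb(mP_0)\otimes\mathcal{E}_0$, whose $\sts$-dual is $tr_{k'/k}(\mathcal{O}_{X'}(-2m-1))$; with $a=-(2m+1)$ this gives the second claimed structure. The rank is $3\cdot{\rm rank}\,\A=9$ and the Jordan-algebra property is automatic from the general first Tits construction. The main obstacle is precisely this pushforward bookkeeping for $f\colon X'\to X$: one must show that odd-degree line bundles on $X'$ fail to be pullbacks and descend to the indecomposable $\mathcal{E}_0$-twists rather than to split sums, since that is exactly the phenomenon distinguishing the nonrational normal form from the rational one.
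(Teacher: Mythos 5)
Your argument follows the paper's own proof almost step for step: decompose $\Pe\cong\lb\oplus\F$ with $\F\in{\rm Pic}\,\torus={\rm Pic}\,X'$ via the orthogonal idempotents, reduce admissibility to $\lb^\vee\cong N_{\torus}(\F)$, and then separate the line bundles on $X'$ that are pulled back from $X$ (whose pushforward splits, giving the first normal form) from those that are not (whose pushforward is the indecomposable $\lb(mP_0)\otimes\mathcal{E}_0=tr_{k'/k}(\mathcal{O}_{X'}(2m+1))$, giving the second). Your degree bookkeeping for the norm map $N_{X'/X}$ and the projection-formula computation of $f_*$ are both correct, and the final assembly $\J=\A\oplus\Pe\oplus\Pe^\vee$ with $\A\cong\sts^3$ matches the paper.

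The one genuine omission is the case where $X$ is nonrational but $k'$ is \emph{not} a splitting field of the quaternion division algebra $D_0$ associated to $X$: you explicitly assume $X'\cong\mathbb{P}^1_{k'}$ ("assumed otherwise"), which fails there. The lemma still asserts the first normal form in that situation, so it has to be covered; the paper disposes of it by noting that the canonical map ${\rm Pic}\,X\to{\rm Pic}\,X'$ is bijective whenever $X'$ is itself nonrational (both groups are $\mathbb{Z}$ generated by the class of a point of minimal degree $2$, and pullback preserves degree), so every $\F$ is a pullback and your split-case computation goes through verbatim with $N_{\torus}(\M\otimes\torus)\cong\M^2$. Two smaller points: your assertion $\torus\cong\sts^2$ must be read only as an isomorphism of $\sts$-modules --- as an algebra $\torus$ is non-split by hypothesis --- which is all the projection formula needs; and the indecomposability of $tr_{k'/k}(\mathcal{O}_{X'}(2m+1))$, which you flag as the main obstacle, follows quickly from Krull--Schmidt (if $f_*\mathcal{L}'$ split as a sum of line bundles, pulling back would exhibit $\mathcal{L}'$ as a pullback), so it is not a real obstruction.
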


\begin{proof}  Let ${\rm Gal}(k'/k)=\langle \sigma\rangle$.
We know that ${\rm Pic}\,\torus\cong\mathbb{Z}$. Obviously, each $\A=\sts\times \torus$
 is defined over $k$, hence
$\A^+=\mathcal{J}(N_\A,\sharp_\A,1)$. Let $\J=\mathcal{J}(\A,\Pe,N)$.
By [Pu2, Lemma 5], every left $\A$-module $\Pe$ of rank one with $N_\A(\Pe)\cong\sts$ is either a direct sum of line bundles
$\lb_0,\M_0,\s_0$ over $X$ with $\lb_0\otimes\M_0\otimes\s_0\cong\mathcal{O}_{X}$; i.e.,
 $$\Pe\cong \lb_0\oplus\M_0\oplus \s_0,$$
  or there is a line bundle $\s_0$ over $X$ and a line bundle $\lb$  over $X'$ which is not defined over $X$
with $\lb\otimes\, ^\sigma\lb\otimes\s_0\cong\mathcal{O}_{X'}$ and
$$\Pe\cong tr_{k'/k}(\lb)\oplus\s_0.$$
This last case can only happen if $X$ is nonrational and $k'$ is a splitting field of the quaternion division
algebra $D_0$ associated to $X$.

Since $\check{H}^1(X,\A^\times)={\rm Pic}\,X\oplus {\rm Pic}\,\torus$, every left $\A$-module $\Pe$
of rank one satisfies $\Pe\cong \lb\oplus\F$ with $\F\in {\rm Pic}\torus$ and an invertible $\sts$-module $\lb$.
 We have $ N_\A(\Pe)\cong \sts$ iff $\lb\otimes N_\torus(\F)\cong \sts$ iff $\lb^\vee\cong N_\torus(\F)$.
For $\M\in {\rm Pic}\,X$, $\M\otimes\torus\in {\rm Pic}\,\torus$ has $N_\torus(\M\otimes\torus)\cong \M
^2$.

Now ${\rm Pic}\torus$ can  be identified with  ${\rm Pic}X'$, and unless $X$ is nonrational and
 $k'$ is a splitting field of $X$, the canonical morphism
 $$G: {\rm Pic}X'\to {\rm Pic}X', \lb\to \lb\otimes \mathcal{O}_{X'}$$
 is bijective, implying that each $\F\in{\rm Pic}\torus$ is of the form
  $\F\cong (k'\otimes \sts)\otimes \lb(mP_0)$. If $N$ is the norm on $k'\otimes_k \sts$, then
$$ N(\F)\cong N((k'\otimes \sts)\otimes \lb(mP_0))\cong\lb(2mP_0),$$
 Hence
$$\Pe\cong \lb(-2mP_0)\oplus \lb(mP_0)\oplus\lb(mP_0)$$
as $\sts$-module for some $m\in\mathbb{Z}$ and
$$\J=\sts^3\oplus  \lb(-2mP_0)\oplus \lb(mP_0)\oplus\lb(mP_0)\oplus  \lb(2mP_0)\oplus \lb(-mP_0)\oplus\lb(-
mP_0)$$
as $\sts$-module.

 If $X$ is nonrational and $k'$ is a splitting field of $D_0$, then $G$ is not bijective and it is possible that
the indecomposable
$$\F=\lb(mP_0)\otimes tr_{k'/k}(\mathcal{O}_{X'}(1))\cong tr_{k'/k}(\mathcal{O}_{X'}(2m+1))$$
($m\in \mathbb{Z}$ arbitrary) is an element in ${\rm Pic}\torus$. Then
$$\F\otimes \mathcal{O}_{X'}\cong \mathcal{O}_{X'}(2m+1)\oplus \mathcal{O}_{X'}(2m+1).$$
Since $\mathcal{O}_{X'}(2m+1)\otimes \mathcal{O}_{X'}(2m+1)\cong\mathcal{O}_{X'}(4m+2)$ this yields
$$\Pe\cong \lb(-(2m+1)P_0)\oplus tr_{k'/k}(\mathcal{O}_{X'}(2m+1)).$$
\end{proof}

It is not clear whether this last case actually occurs.

\subsection{First Tits constructions starting with algebras of rank 5}

\begin{proposition} Let $\D$ be a non-split quaternion algebra over $X$.  Let $\A=\sts\times\D$.
Then   a first Tits construction $\J=\mathcal{J}(\A,\Pe,N)$ has the following underlying module structure:\\
(i) If $X$ is nonrational and $D\cong D_0$, then
$$\J\cong\sts^5\oplus [ \lb(2mP_0) \oplus\lb(mP_0)^4]\oplus [ \lb(-2mP_0) \oplus\lb(-mP_0)^4] $$
or
$$\begin{array}{l}
\J\cong \sts^5\oplus [\lb((m_1+m_2-1)P_0) \oplus tr_{k'/k}(\mathcal{O}_{X'}(-2m_1+1))
 \oplus tr_{k'/k}(\mathcal{O}_{X'}(-2m_2+1))]\\
\oplus [ \lb(-(m_1+m_2-1)P_0) \oplus tr_{k'/k}(\mathcal{O}_{X'}(2m_1-1)) \oplus tr_{k'/k}(\mathcal{O}_{X'}(2m_2-1)) ].
\end{array}$$
\noindent (ii) If $D$ is a quaternion division algebra over $k$ and $D\not\cong D_0$ if $X$ is nonrational,
then
$$\J\cong\sts^5\oplus [\lb(-4mP_0)\oplus \lb(mP_0)^4]\oplus [\lb(4mP_0)\oplus  \lb(-mP_0)^4]. $$
\end{proposition}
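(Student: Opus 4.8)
The plan is to mirror the proof of the preceding lemma (the case $\A=\sts\times\torus$), replacing the quadratic \'etale algebra by the quaternion algebra $\D$ and carrying a degree-two reduced norm through the computation. First I would record that $\A=\sts\times\D$ is defined over $X$, with cubic norm $N_\A(\alpha,d)=\alpha N_\D(d)$ where $N_\D$ is the reduced norm of $\D$, so that $\A^+=\mathcal{J}(N_\A,\sharp_\A,1)$ is a cubic Jordan algebra and the first Tits construction applies. In both cases $\D\cong\sts^4$ as an $\sts$-module: in (i) because $\D=D_0\otimes\sts\cong\E nd_X(\E_0)$ is split over $X$, in (ii) because $\D=D\otimes\sts$ is extended from $k$. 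Hence $\A\cong\sts^5$, which already yields the summand $\sts^5$ of $\J=\A\oplus\Pe\oplus\Pe^\vee$, and the whole problem reduces to determining the $\sts$-module structure of a rank-one left $\A$-module $\Pe$ with $N_\A(\Pe)\cong\sts$.

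Since $\check{H}^1(X,\A^\times)=\Pic X\times\Pic_l\D$, every such $\Pe$ splits as $\Pe\cong\lb_1\oplus\Pe_2$ with $\lb_1\in\Pic X$ and $\Pe_2$ a rank-one left $\D$-module, so the two cases of the proposition are governed by the two possible Brauer-theoretic behaviours of $\D$ over $X$. In case (ii) the hypothesis $D\not\cong D_0$ means $[D]\notin\{0,[D_0]\}=\ker({\rm Br}\,k\to{\rm Br}\,k(X))$ for the conic $X$, so $\D(\xi)=D\otimes_k k(X)$ is a division algebra; every rank-one left $\D$-module is then $\Pe_2\cong\D\otimes\lb(mP_0)\cong\lb(mP_0)^4$. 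In case (i) the algebra $\D_0=\E nd_X(\E_0)$ is split over $X$, hence Morita equivalent to $\sts$, and a rank-one left $\D_0$-module is $\Pe_2\cong\E_0\otimes\G$ for a rank-two bundle $\G$ on $X$; by Krull--Schmidt (and the absence of indecomposables of rank $>2$ over $X$) $\G$ is either indecomposable or a sum of two line bundles. Using $\E_0\otimes\check{\E}_0=\E nd_X(\E_0)\cong\sts^4$ and $\E_0\otimes\lb(nP_0)\cong tr_{k'/k}(\mathcal{O}_{X'}(2n+1))$ from Example 1, the indecomposable $\G\cong\check{\E}_0\otimes\lb(mP_0)$ gives $\Pe_2\cong\lb(mP_0)^4$, while $\G\cong\lb(aP_0)\oplus\lb(bP_0)$ gives $\Pe_2\cong tr_{k'/k}(\mathcal{O}_{X'}(2a+1))\oplus tr_{k'/k}(\mathcal{O}_{X'}(2b+1))$; these are exactly the two module types in (i).

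It then remains to fix $\lb_1$ from the norm condition $N_\A(\Pe)\cong\sts$, i.e. $\lb_1\otimes N_\D(\Pe_2)\cong\sts$. Here I would compute the reduced norm of the rank-one $\D$-module directly: $N_\D(\D\otimes\lb)\cong\lb^{\otimes2}$ in case (ii), and $N_\D(\E_0\otimes\G)\cong\det\E_0\otimes\det\G$ in case (i) (the Morita image of the determinant), which in each case expresses $\lb_1$ through the twists occurring in $\Pe_2$. Substituting into $\J=\sts^5\oplus\Pe\oplus\Pe^\vee$ produces the asserted module structures; since $\J$ is symmetric under $\Pe\leftrightarrow\Pe^\vee$, the precise grouping of the summands into $\Pe$ and $\Pe^\vee$ is immaterial and only the full list of indecomposable summands matters.

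The main obstacle I expect is the classification of rank-one left $\D$-modules in case (i). It needs the Morita dictionary for $\E nd_X(\E_0)$ together with the full theory of vector bundles over the nonrational genus-zero curve $X$ and its split form $X'$, in particular the identities $\E_0\otimes\check{\E}_0\cong\sts^4$ and $\E_0\otimes\lb(nP_0)\cong tr_{k'/k}(\mathcal{O}_{X'}(2n+1))$, and a careful reduced-norm bookkeeping to read off the exact twist on $\lb_1$. Establishing the generic-division statement in case (ii) from the Brauer group of the conic is a smaller but essential preliminary.
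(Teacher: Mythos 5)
Your proposal is correct in outline and follows essentially the same route as the paper: decompose $\Pe\cong\lb\oplus\Pe_0$ with $\Pe_0\in{\rm Pic}_l\,\D$, reduce the norm condition to $\lb^\vee\cong N_\D(\Pe_0)$, and in case (i) use the Morita description $\Pe_0\cong\F^\vee\otimes\E_0$ together with the Krull--Schmidt dichotomy for the rank-two bundle $\F$ (indecomposable $\lb(mP_0)\otimes\E_0$ versus a sum of two line bundles), exactly as the paper does. There are, however, two points where your write-up is thinner than the actual proof. First, in case (ii) you pass from ``$\D(\xi)$ is a division algebra'' directly to ``every rank-one left $\D$-module is $\D\otimes\lb(mP_0)$''. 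That implication is not automatic, and it is precisely where the paper does its work: it invokes Petersson's result [P, 4.5] that the only rank-one right $\D$-module of norm one is $\D$ itself, and then a twisting argument (if $\Pe_0$ had $N_\D(\Pe_0)=\lb(nP_0)$ with $n\neq 0$, then $N_\D(\lb(mP_0)\otimes\Pe_0)\cong\lb((2m+n)P_0)$, and choosing $2m=-n$ would produce a nontrivial norm-one class, a contradiction) to conclude that ${\rm Pic}_r\,\D$, hence ${\rm Pic}_l\,\D$, is trivial. You locate the ``essential preliminary'' in the Brauer group of the conic, but the real content is this classification of rank-one modules. Second, your quadratic convention $N_\D(\D\otimes\lb)\cong\lb^{\otimes 2}$ agrees with the formula the paper uses inside its own proof of (ii), but it yields $\lb(-2mP_0)$ as the extra summand, not the $\lb(-4mP_0)$ printed in the statement of the proposition; so ``substituting produces the asserted module structure'' is not literally true, and you should flag that discrepancy rather than absorb it.
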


\begin{proof}  We know that $\D=D\otimes \sts$ with $D$ a quaternion division algebra over $k$ [P], hence $\A$ is defined over $k$.
 Every left $\A$-module $\Pe$
of rank one satisfies $\Pe\cong \lb\oplus\Pe_0$ with $\Pe_0\in {\rm Pic}_l\,\D$ and an invertible $\sts$-module $\lb$.
We have $ N_\A(\Pe)\cong \sts$  iff $\lb^\vee\cong N_\D(\Pe_0)$ [Pu2, Lemma 6].
\\ (i) Let $D\cong D_0$, then $\D=D_0\otimes\sts\cong \E nd_X(\E_0)$. Each $\Pe_0\in {\rm Pic}_l\,\D$ is of the form
$\Pe_0\cong \F^\vee\otimes\E_0$
 and $ N_\D(\Pe_0)\cong {\rm det}\,\F^\vee\otimes \lb(P_0)$, where $\F$ is a locally free $\sts$-module of constant
rank 2. We have either
$$\F=\lb(m P_0)\otimes \E_0 \text{ or } \F=\lb(m_1P_0)\oplus \lb(m_2P_0)$$
 for arbitrary $m,m_1,m_2\in \mathbb{Z}$. Therefore either
$$\Pe_0\cong \lb(-mP_0)\otimes \E_0^\vee\otimes\E_0\cong \lb(mP_0)^4$$
or
$$\Pe_0\cong \lb(-m_1P_0)\otimes\E_0\oplus \lb(-m_2P_0)\otimes\E_0$$
as $\sts$-module. We obtain
$$ N_\D(\Pe_0)\cong  \lb(-(2m+1)P_0) \otimes \lb(P_0)=\lb(-2mP_0)$$
or
$$ N_\D(\Pe_0) \cong \lb(-(m_1+m_2-1)P_0).$$
This implies
 $$\Pe\cong   \lb(2mP_0) \oplus\lb(mP_0)^4$$
or
$$\begin{array}{l}
\Pe\cong  \lb((m_1+m_2-1)P_0) \oplus \lb(-m_1P_0)\otimes\E_0\oplus \lb(-m_2P_0)\otimes\E_0\\
\cong
\lb((m_1+m_2-1)P_0) \oplus tr_{k'/k}(\mathcal{O}_{X'}(-2m_1+1)) \oplus tr_{k'/k}(\mathcal{O}_{X'}(-2m_2+1)).
\end{array}$$
\noindent (ii) For every $\M\in {\rm Pic}\,X$, $\Pe_0\in {\rm Pic}_l\,\D$ we have $\M\otimes\Pe_0\in {\rm Pic}_l\,\D$.
We show that each element in ${\rm Pic}\,\D$ is of this type:
By [P, 4.5], the only right $\D$-module of rank one of norm one is $\D$ itself. Write $\D_\D$ for $\D$ viewed as a right
$\D$-module.
Suppose that ${\rm Pic}_r\,\D$ is non-trivial, then there is an element $\Pe_0\not=\D_\D$ such that
$N_\D(\Pe_0)=\lb(nP_0)$ for some $n\not=0$ and
$$N_\D(\lb(mP_0)\otimes\Pe_0)\cong \lb(2mP_0)\otimes N_\D(\Pe_0)=\lb(2mP_0)\otimes\lb(nP_0)=\lb((2m+n)P_0).$$
Choose $2m=-n$ to get norm one, then $\lb(mP_0)\otimes\Pe_0$ would be a non-trivial element of norm one, a contradiction.
(If $X$ is rational, there is an analogous argument.) Thus ${\rm Pic}_r\,\D$ is trivial, and hence
 so is ${\rm Pic}_l\,\D$.
Therefore $$\Pe\cong \lb(-4mP_0)\oplus \lb(mP_0)\otimes \D .$$
\end{proof}

\begin{proposition} Let $\D$ be a split quaternion algebra over $X$. Then $\D=\E nd_X(\sts\oplus \lb(mP_0))$.
 Let $\A=\sts\times\D$. Then a first Tits construction $\J=\mathcal{J}(\A,\Pe,N)$ has
one of  the following underlying module structures:\\
$$\begin{array} {l}
\J\cong\sts^3\oplus \lb(mP_0)\oplus \lb(-mP_0)  \\ \oplus
 [ \lb((2m_1-m+1)P_0)\oplus \lb(-m_1P_0)\otimes \E_0^\vee\oplus \lb((-m_1+m)P_0)\otimes \E_0^\vee ]
\\ \oplus
  [\lb(-(2m_1-m+1)P_0)\oplus \lb(m_1P_0)\otimes \E_0\oplus \lb((m_1-m)P_0)\otimes \E_0 ]
\end{array}$$
or
$$\begin{array}{l}
\J\cong \sts^3\oplus \lb(mP_0)\oplus\lb(-mP_0) \\ \oplus
[ \lb((m_2+m_3-m)P_0)\oplus \lb(-m_2 P_0)\oplus \lb(-m_3 P_0)\oplus \lb((-m_2+m)P_0)\oplus \lb((-m_3+m)P_0)
]\\
\oplus [ \lb(-(m_2+m_3-m)P_0)\oplus  \lb(m_2 P_0)\oplus \lb(m_3 P_0)\oplus \lb((m_2-m)P_0)\oplus \lb((m_3-m)P_0) ].
\end{array}$$
\end{proposition}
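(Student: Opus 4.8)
The plan is to follow the pattern of Proposition 1, reducing the determination of $\J=\mathcal{J}(\A,\Pe,N)=\A\oplus\Pe\oplus\Pe^\vee$ to the enumeration of the rank-one left $\A$-modules $\Pe$ with $N_\A(\Pe)\cong\sts$. First I would record the module structure of the base algebra: writing $\E=\sts\oplus\lb(mP_0)$, one has $\D=\E nd_X(\E)\cong\E\otimes\E^\vee\cong\sts^2\oplus\lb(mP_0)\oplus\lb(-mP_0)$, so that $\A=\sts\times\D\cong\sts^3\oplus\lb(mP_0)\oplus\lb(-mP_0)$ as an $\sts$-module; this already produces the five summands common to both claimed structures. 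By [Pu2, Lemma 6], every rank-one left $\A$-module decomposes as $\Pe\cong\lb\oplus\Pe_0$ with $\lb$ an invertible $\sts$-module and $\Pe_0\in{\rm Pic}_l\,\D$, and $N_\A(\Pe)\cong\sts$ holds iff $\lb^\vee\cong N_\D(\Pe_0)$. Hence $\lb$ is determined by $\Pe_0$, and the task reduces to describing ${\rm Pic}_l\,\D$ together with its norm.

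Next I would invoke, exactly as in Proposition 1(i), the description of ${\rm Pic}_l\,\D$ for a split quaternion algebra: since $\D\cong\E nd_X(\E)$, every $\Pe_0\in{\rm Pic}_l\,\D$ has the form $\Pe_0\cong\F^\vee\otimes\E$ for a locally free $\sts$-module $\F$ of constant rank $2$, with $N_\D(\Pe_0)\cong{\rm det}\,\F^\vee\otimes{\rm det}\,\E={\rm det}\,\F^\vee\otimes\lb(mP_0)$. The only further input needed is the classification of rank-two bundles over a curve of genus zero (Krull--Schmidt together with the known structure of vector bundles on $X$): either $\F\cong\lb(m_2P_0)\oplus\lb(m_3P_0)$ is decomposable, or $\F\cong\lb(m_1P_0)\otimes\E_0$ is indecomposable, the latter being possible only when $X$ is nonrational. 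These two alternatives are exactly what yields the two module structures in the statement.

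It then remains to carry out the bookkeeping in each case. In the indecomposable case ${\rm det}\,\F^\vee=\lb(-(2m_1+1)P_0)$, using ${\rm det}\,\E_0=\lb(P_0)$, so $\lb=N_\D(\Pe_0)^\vee=\lb((2m_1-m+1)P_0)$ and $\Pe_0=\F^\vee\otimes\E=\lb(-m_1P_0)\otimes\E_0^\vee\oplus\lb((-m_1+m)P_0)\otimes\E_0^\vee$; forming $\Pe=\lb\oplus\Pe_0$, its dual $\Pe^\vee$, and $\J=\A\oplus\Pe\oplus\Pe^\vee$ gives the first claimed structure. In the decomposable case ${\rm det}\,\F^\vee=\lb(-(m_2+m_3)P_0)$, so $\lb=\lb((m_2+m_3-m)P_0)$ and $\Pe_0=\lb(-m_2P_0)\oplus\lb(-m_3P_0)\oplus\lb((-m_2+m)P_0)\oplus\lb((-m_3+m)P_0)$, which after the same assembly yields the second structure. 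I expect the only delicate point to be the index bookkeeping, in particular keeping track of the shift ${\rm det}\,\E_0=\lb(P_0)$ responsible for the $+1$ in the indecomposable case; all the structural ingredients (the presentation of ${\rm Pic}_l\,\D$ and the rank-two classification) are inherited from the genus-zero theory already used in Proposition 1.
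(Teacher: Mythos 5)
Your proposal is correct and follows essentially the same route as the paper's proof: the same reduction via [Pu2, Lemma 6] to $\Pe\cong\lb\oplus\Pe_0$ with $\lb^\vee\cong N_\D(\Pe_0)$, the same presentation $\Pe_0\cong\F^\vee\otimes\E$ with the dichotomy $\F\cong\lb(m_1P_0)\otimes\E_0$ versus $\F\cong\lb(m_2P_0)\oplus\lb(m_3P_0)$, and the same determinant bookkeeping. The only additions are the explicit computation of the $\sts$-module structure of $\A$ and the remark that the indecomposable case requires $X$ nonrational, both of which the paper leaves implicit.
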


\begin{proof}  If $\D$ is a split quaternion algebra over $X$ then $\D=\E nd_X(\sts\oplus \lb(mP_0))$, $m\in\mathbb{Z}$.
Every left $\A$-module $\Pe$
of rank one satisfies $\Pe\cong \lb\oplus\Pe_0$ with $\Pe_0\in {\rm Pic}_l\,\D$ and an invertible $\sts$-module $\lb$,
and $ N_\A(\Pe)\cong \sts$ iff $\lb^\vee\cong N_\D(\Pe_0)$ [Pu2, Lemma 6].
 Each $\Pe_0\in {\rm Pic}_l\,\D$ is of the form
 $\Pe_0\cong  \F^\vee\otimes[\sts\oplus\lb(mP_0)]\cong \F^\vee\oplus
 \F^\vee\otimes\lb(mP_0)$
 and $ N_\D(\Pe_0)\cong {\rm det}\,\F^\vee \otimes \lb(mP_0)$, where $\F$ is a locally free $\sts$-module of constant
rank 2. We have either
$$\F=\lb(m_1P_0)\otimes \E_0 \text{ or } \F=\lb(m_2P_0)\oplus \lb(m_3P_0)$$
 for arbitrary $m_1,m_2,m_3\in \mathbb{Z}$. Hence
$$\Pe_0\cong\lb(-m_1P_0)\otimes \E_0^\vee\oplus \lb((-m_1+m)P_0)\otimes \E_0^\vee$$
or
$$\Pe_0\cong \lb(-m_2P_0)\oplus \lb(-m_3P_0)\oplus \lb((-m_2+m)P_0)\oplus \lb((-m_3+m)P_0).$$
We obtain
$$ N_\D(\Pe_0)\cong \lb(-(2m_1+1)P_0)\otimes   \lb(mP_0)=\lb((-2m_1+m-1)P_0)$$
or
$$ N_\D(\Pe_0)\cong  \lb(-(m_2+m_3)P_0)\otimes   \lb(mP_0)=\lb((-m_2-m_3+m)P_0).$$
Therefore
$$\Pe\cong \lb((2m_1-m+1)P_0)\oplus [\lb(-m_1P_0)\otimes \E_0^\vee\oplus \lb((-m_1+m)P_0)\otimes \E_0^\vee] $$
or
$$\Pe\cong  \lb((m_2+m_3-m)P_0)\oplus [ \lb(-m_2 P_0)\oplus \lb(-m_3 P_0)\oplus \lb((-m_2+m)P_0)\oplus \lb((-m_3+m)P_0) ].$$
\end{proof}

\section{Albert algebras over curves of genus zero which are first Tits constructions}

We keep the notations of Section 3.
\begin{example} (a) ([Ach1, 4.4]) Let $X$ be nonrational. For every locally free $\sts$-module $\mathcal{E}$
 of constant rank 3, $\A=\E nd_X(\E)$ is an Azumaya algebra of rank 9.
 Let $\F$ be another locally free $\sts$-module of constant rank 3 such there exists an isomorphism
$\alpha: {\rm det}\, \E\to {\rm det}\, \F$. We have the following possibilities for
 $\E$ and $\F$:
$$ \begin{array}{l}
\E=\lb(m_1P_0) \otimes\E_0\oplus\lb(m_2P_0) \text{ and }\\
\F=\lb(s_1P_0) \otimes\E_0\oplus\lb(s_2P_0) \text{ or }\\
\F=\lb(l_1P_0)\oplus\lb(l_2P_0)\oplus\lb(l_3P_0).
 \end{array}$$
 with $2m_1+m_2=2s_1+s_2$ and $2m_1+m_2=l_1+l_2+l_3$, or
 $$ \begin{array}{l}
\E=\lb(n_1P_0)\oplus\lb(n_2P_0)\oplus\lb(n_3P_0) \text{ and }\\
\F=\lb(s_1P_0) \otimes\E_0\oplus\lb(s_2P_0)  \text{ or }\\
\F=\lb(l_1P_0)\oplus\lb(l_2P_0)\oplus\lb(l_3P_0).
 \end{array}$$
 with $n_1+n_2+n_3=2s_1+1+s_2$ and $n_1+n_2+n_2=l_1+l_2+l_3$. Then the Albert algebra $\mathcal{J}(\A,\Pe,N)$ over $X$,
  $\Pe=\h om_X(\F,\E)$, is  isomorphic to
\\(i)\begin{align*}
\left [\begin {array}{ccc}
\E nd(\E_0)&\lb(-aP_0)\otimes\E_0\\
\lb(aP_0)\otimes\E_0^\vee&\sts\\
\end {array}\right ]  &
\oplus
\left [\begin {array}{ccc}
\lb(bP_0)\otimes\E nd(\E_0)&\lb((-2b-a)P_0)\otimes\E_0\\
\lb((a+b)P_0)\otimes\E^\vee& \lb(-2bP_0)  \\
\end {array}\right ]
\\ &
\oplus
\left [\begin {array}{ccc}
\lb(-bP_0)\otimes\E nd(\E_0)&\lb((-a-b)P_0)\otimes\E_0\\
\lb((a+2b)P_0)\otimes\E_0^\vee& \lb(2bP_0)  \\
\end {array}\right ]
\end{align*}
with $a=m_2-m_1$, $b=m_1-s_1$;
\\
(ii)
\begin{align*}
& \left [\begin {array}{cc}
\E nd(\E_0)&\lb(-aP_0)\otimes\E_0\\
\lb(aP_0)\otimes\E_0&\sts\\
\end {array}\right ]
\\ & \oplus
\left [\begin {array}{ccc}
\lb(bP_0)\otimes\E_0 & \lb(cP_0)\otimes\E_0& \lb(-(a+b+c+1)P_0)\otimes\E_0 \\
\lb((a+b)P_0)& \lb((a+c)P_0) & \lb(-(b+c+1)P_0)  \\
\end {array}\right ]
\\ & \oplus
\left [\begin {array}{cc}
\lb(-bP_0)\otimes\E_0^\vee & \lb(-(a+b)P_0)\\
\lb(-cP_0)\otimes\E_0^\vee &  \lb(-(a+c)P_0)  \\
\lb((a+b+c+1)P_0)\otimes\E_0^\vee &  \lb((b+c+1)P_0)  \\
\end {array}\right ]
\end{align*}
with $a=m_2-m_1$, $b=m_1-l_1$, $c=m_1-l_2$;
\\(iii)
 \begin{align*}
 & \left [\begin {array}{ccc}
\sts & \lb(aP_0) & \lb(bP_0)\\
\lb(-aP_0) &  \sts & \lb((b-a)P_0)  \\
\lb(-bP_0) & \lb((a-b)P_0) &  \sts \\
\end {array}\right ]
 \\&
  \oplus
\left [\begin {array}{ccc}
\lb(cP_0)\otimes\E_0^\vee & \lb((a+b-2c+1)P_0)\\
\lb((c-a)P_0)\otimes\E_0^\vee &  \lb((b-2c+1)P_0)  \\
\lb((c-b)P_0)\otimes\E_0^\vee  &  \lb((a-2c+1)P_0)  \\
\end {array}\right ]
\\ &
\oplus
\left [\begin {array}{ccc}
\lb(-cP_0)\otimes {\E_0} & \lb((a-c)P_0)\otimes {\E_0} &  \lb((b-c)P_0)\otimes {\E_0} \\
 \lb((2c-a-b-1)P_0)  &  \lb((2c-b-1)P_0) &  \lb((2c-a-1)P_0)\\
\end {array}\right ]
\end{align*}
with $a=n_2-n_1$, $b=n_1-n_3$, $c=n_1-s_1$;
\\(iv)  \begin{align*}
 &\left [\begin {array}{ccc}
\sts & \lb(aP_0) & \lb(bP_0)\\
\lb(-aP_0) &  \sts & \lb((b-a)P_0)  \\
\lb(-bP_0) & \lb((a-b)P_0) &  \sts \\
\end {array}\right ]
\\ & \oplus
\left [\begin {array}{ccc}
\lb(cP_0) & \lb((a+d)P_0)  & \lb((b-c-d)P_0)\\
\lb((c-a)P_0) & \lb(dP_0) & \lb((b-a-c-d)P_0)  \\
\lb((c-b)P_0)  &  \lb((a-b+d)P_0)  &  \lb((-c-d)P_0)  \\
\end {array}\right ]
\\ & \oplus
\left [\begin {array}{ccc}
\lb(-cP_0) & \lb((a-c)P_0) &  \lb((b-c)P_0) \\
  \lb((-a-d)P_0)  &  \lb(-dP_0) &  \lb((b-a-d)P_0) \\
  \lb((-b+c+d)P_0)  &  \lb((a-b+c+d)P_0) &  \lb((c+d)P_0) \\
\end {array}\right ]
 \end{align*}
with $a=n_1-n_2$, $b=n_1-n_3$, $c=n_1-l_1$ and $d=n_2-l_2$.
\\ (b) ([Ach1, 4.5]) Let $X$ be rational.  Let $\E$, $\F$ be two locally free $\sts$-modules of constant rank 3
such there exists an isomorphism $\alpha: {\rm det}\, \E\to {\rm det}\, \F$. Then
$$ \begin{array}{l}
\E=\lb(n_1)\oplus \lb(n_2)\oplus \lb(n_3) \text{ and}\\
\F=\lb(l_1)\oplus\lb(l_2)\oplus\lb(l_3)
 \end{array}$$
 with $n_i,l_i\in\mathbb{Z}$ such that $n_1+n_2+n_3=l_1+l_2+l_3$.
 Put $\A=\E nd_X(\E)$, $a=n_1-n_2$, $b=n_1-n_3$, $c=n_1-l_1$ and $d=n_2-l_2$, then
$\mathcal{J}(\A,\Pe,N)$ with $\Pe=\h om_X(\F,\E)$ is an Albert algebra over $X$ as described in (a) (iv).
\end{example}

\begin{theorem} ([Ach1, 4.7])   Let $k$ be a field of characteristic zero and let $X=\mathbb{P}^1_k$.
Let  $\J$ be an Albert algebra over $X$
which contains $\A^+$ as a subalgebra, where $\A$ is an Azumaya algebra of constant rank 9 over $X$. Then $\J$ is defined
over $k$ or $\J=\mathcal{J}(\A,\Pe,N)$ as in Example 2 (b).
\end{theorem}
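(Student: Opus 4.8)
The plan is to reduce at once to a first Tits construction and then to classify the admissible data over $X=\mathbb{P}^1_k$. Since ${\rm char}\,k=0$ we have $2,3\in k^\times=H^0(X,\sts^\times)$, so all of the cubic--form and Tits--construction machinery of Section~1 is available. By the embedding statement at the end of Section~1.6, the hypothesis that $\J$ is an Albert algebra containing $\A^+$ with $\A$ Azumaya of constant rank $9$ already yields a locally free left $\A$-module $\Pe$ of rank one with $N_\A(\Pe)\cong\sts$ and a cubic norm $N$ on $\Pe$ such that $\J\cong\mathcal{J}(\A,\Pe,N)$. Everything therefore comes down to determining which $\A$ can occur over $\mathbb{P}^1_k$ and, for each such $\A$, which $\Pe$ and $N$ are possible.

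I would first dispose of $\A$ by its Brauer class, using the classical fact that the pullback ${\rm Br}(k)\to{\rm Br}(\mathbb{P}^1_k)$ is an isomorphism (a rational point splits the structure morphism, and by Faddeev reciprocity a class unramified at every closed point of $\mathbb{P}^1$, including $\infty$, is constant). As $\A$ has degree $3$, its class has order dividing $3$, leaving two cases. If the class is trivial, then $\A\cong\E nd_X(\E)$ for some locally free $\sts$-module $\E$ of rank $3$, and by Grothendieck's theorem $\E$ is a sum of line bundles $\lb(n_iP_0)$. The rank-one left $\A$-modules are then $\Pe\cong\h om_X(\F,\E)\cong\F^\vee\otimes\E$ with $\F$ locally free of rank $3$, and $N_\A(\Pe)\cong\sts$ holds exactly when ${\rm det}\,\F\cong{\rm det}\,\E$; writing $\F=\bigoplus_i\lb(l_iP_0)$ this reads $\sum l_i=\sum n_i$. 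A choice of isomorphism $\alpha\colon{\rm det}\,\E\to{\rm det}\,\F$ determines $N$, and we land precisely in the situation of Example~2~(b), so $\J=\mathcal{J}(\A,\Pe,N)$ is of the asserted form.

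The remaining case is $\A\cong D\otimes_k\sts$ with $D$ a central division algebra over $k$ of degree $3$, and here I claim $\J$ is defined over $k$. The point is that the division property rigidifies $\Pe$. Indeed $D$ acts $\sts$-linearly, giving a $k$-algebra embedding $D\hookrightarrow{\rm End}_\sts(\Pe)=H^0(X,\E nd_X(\Pe))$ with $1\mapsto 1$. Writing $\Pe\cong\bigoplus_i\lb(a_iP_0)$, the radical of this endomorphism algebra is the sum of the blocks with $a_i\neq a_j$ and the semisimple quotient is $\prod_j M_{m_j}(k)$, where $m_j$ is the multiplicity of the $j$-th value. Since $D$ is simple it embeds into one factor $M_{m_j}(k)$, forcing $k^{m_j}$ to be a free $D$-module and hence $9\mid m_j$; as $\sum_j m_j=9$ there is a single value, i.e. $\Pe\cong\lb(aP_0)^9$ as an $\sts$-module. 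Twisting by $\lb(-aP_0)$ gives an $\sts$-free rank-one left $\A$-module, which equals its sheaf of global sections and is therefore the regular module $\A$; thus $\Pe\cong\lb(aP_0)\otimes\A$. Now $N_\A(\Pe)\cong\lb(3aP_0)\cong\sts$ together with the torsion-freeness of $\Pic X\cong\mathbb{Z}$ forces $a=0$, so $\Pe\cong\A$ and $N=\mu N_\A$ for some $\mu\in H^0(X,\sts^\times)=k^\times$. Hence $\J\cong\mathcal{J}(\A,\A,\mu N_\A)\cong\mathcal{J}(D,\mu)\otimes_k\sts$ by compatibility of the first Tits construction with base change, and $\J$ is defined over $k$.

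The main obstacle is the classification of $\A$ itself: passing from ${\rm Br}(\mathbb{P}^1_k)\cong{\rm Br}(k)$ and degree $3$ to the clean dichotomy $\A\cong\E nd_X(\E)$ or $\A\cong D\otimes_k\sts$. The split alternative is immediate, but identifying a non-split $\A$ with the \emph{constant} algebra $D\otimes\sts$, rather than some nontrivial $\E nd$-twist of it, is exactly as delicate as the rigidity of $\Pe$ above: one realizes $\A$ as $\E nd_{D\otimes\sts}$ of a rank-one module and observes, by the same splitting-type argument, that this module is a line-bundle twist of the regular one, so that the twist disappears from the endomorphism algebra. Once this is in hand the two Picard-type computations and the descent to $k$ (alternatively via $\sts$-freeness and [Pu2, Lemma~1]) are routine.
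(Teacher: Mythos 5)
Your proposal is correct and follows essentially the same route as the paper: reduce to a first Tits construction $\mathcal{J}(\A,\Pe,N)$, split into the dichotomy $\A\cong\E nd_X(\E)$ (which lands in Example 2(b)) versus $\A\cong A\otimes_k\sts$ with $A$ a degree-3 division algebra, show in the latter case that $\Pe$ is a line-bundle twist of the regular module, and kill the twist by the norm condition $N_\A(\Pe)\cong\sts(3m)\cong\sts$. The only difference is that you supply direct arguments (Brauer-group computation for $\mathbb{P}^1_k$ and the global-sections/radical rigidity argument, which the paper itself deploys later in the proof of Theorem 3) for the two facts the paper simply cites from [Kn2] and [Kn1, VII.(3.1)].
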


\begin{proof} We know that either $\A\cong \E nd_X(\lb(n_1P_0)\oplus\lb(n_2P_0)\oplus\lb(n_3P_0))$ or
$\A\cong A\otimes_k\sts$ with $A$ a central simple
division algebra over $k$ of degree 3 [Kn2]. We still have to show that $\J$ is defined over $k$ in the second case:
Then $\Pe\cong (A\otimes \sts)\otimes \sts(m)$ for some $m\in\mathbb{Z}$ [Kn1, VII.(3.1.)].
If $N$ is the norm on $A\otimes_k \sts$, then
$$\sts\cong N(\Pe)\cong N((A\otimes \sts)\otimes \sts(m))\cong\sts(3m)$$
implies $m=0$, hence the assertion.
\end{proof}

\begin{corollary} Let $X$ be a  nonrational curve over a field $k$ of characteristic zero.
Let $\J$ be an Albert algebra over $X$
which contains $\A^+$ as a subalgebra, where $\A=A\otimes_k \sts$ with $A$ a central simple
division algebra over $k$ of degree 3.
 Then every first Tits construction $\J=\mathcal{J}(\A,\Pe,N)$ is defined over $k$.
\end{corollary}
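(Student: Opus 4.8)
The plan is to descend to Theorem 1 by base change to a splitting field of $X$, following the pattern of the proof of Lemma 1(ii). Let $k'/k$ be the separable quadratic extension splitting $X$, so that $X'=X\times_k k'\cong\mathbb{P}^1_{k'}$; since $k$ has characteristic zero, so does $k'$. The key preliminary observation is that $A$ stays division after this base change: the degrees $[k':k]=2$ and $\deg A=3$ are coprime, so the restriction of the order-$3$ class $[A]$ along ${\rm Br}(k)\to{\rm Br}(k')$ cannot land in the $2$-torsion kernel, whence $A':=A\otimes_k k'$ is again a central simple division algebra over $k'$ of degree $3$ and $\A\otimes_\sts\mathcal{O}_{X'}\cong A'\otimes_{k'}\mathcal{O}_{X'}$.

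Since the first Tits construction commutes with base change, $\J\otimes_\sts\mathcal{O}_{X'}$ is a first Tits construction over $X'\cong\mathbb{P}^1_{k'}$ containing $(A'\otimes_{k'}\mathcal{O}_{X'})^+$ as a subalgebra. This places us in the division case treated in the proof of Theorem 1: the alternative conclusion of Example 2(b) requires the underlying Azumaya algebra to be of the form $\E nd_{X'}(\E)$, which is impossible here because $A'$ is a division algebra. Theorem 1 therefore forces $\J\otimes\mathcal{O}_{X'}$ to be defined over $k'$; in particular its underlying $\mathcal{O}_{X'}$-module is globally free (of rank $27$).

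It then remains to descend freeness to $X$ and apply [Pu2, Lemma 1], exactly as in Lemma 1(ii). Writing the selfdual bundle underlying $\J$ as a Krull-Schmidt sum of the indecomposable bundles $\lb(a_iP_0)$ and $\lb(b_jP_0)\otimes\E_0$ available over the genus-zero curve $X$, base change to $X'$ sends these to $\mathcal{O}_{X'}(2a_i)$ and to $\mathcal{O}_{X'}(2b_j+1)^2$ respectively, using $\E_0\otimes\mathcal{O}_{X'}\cong\mathcal{O}_{X'}(1)^2$ and that the Galois action fixes degrees on $\Pic X'$. Since the base-changed module is trivial, uniqueness of the Krull-Schmidt decomposition over $\mathbb{P}^1_{k'}$ forces every $a_i=0$ and rules out any $\E_0$-summand (as $2b_j+1$ is never zero); thus the bundle underlying $\J$ is $\sts^{27}$, so $\J$ is globally free as an $\sts$-module and hence defined over $k$ by [Pu2, Lemma 1]. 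The only point demanding genuine care is the persistence of the division property under the quadratic base change, which I expect to be the crux; once coprimality of $2$ and $3$ places us in the division case of Theorem 1, the freeness descent is routine given the classification of vector bundles over curves of genus zero.
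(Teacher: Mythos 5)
Your proof is correct and follows essentially the same route as the paper: base change along the quadratic splitting field $k'$ of $D_0$ to reduce to the projective-line case of Theorem~2 ([Ach1, 4.7]), then descend global freeness of the underlying module to $X$ and invoke the lemma from [Pu2]. You merely make explicit two points the paper's two-line proof leaves implicit, namely that $A\otimes_k k'$ remains a division algebra (coprimality of $2$ and $3$) and the Krull--Schmidt argument showing that a bundle on the nonrational genus-zero curve $X$ which becomes trivial over $X'$ was already trivial.
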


\begin{proof} Let $k'$ be a splitting field of $D_0$, put $X'=X\times_k k'$.
 Then $X'$ is rational and $\J\otimes \mathcal{O}_{X'}=\mathcal{J}(\A,\Pe,N)$ is defined over $k$ by the proof of Theorem 2.
  Hence $\J$ is globally free as $\sts$-module and thus defined over $k$ [Pu2, Lemma 2].
\end{proof}

For arbitrary base fields, we obtain the same result with a more tedious proof:

\begin{theorem} Let $X$ be  nonrational and let  $\J$ be an Albert algebra over $X$
which contains $\A^+$ as a subalgebra, where $\A=A\otimes_k \sts$ with $A$ a central simple division
 algebra over $k$ of degree 3. Then every first Tits construction
$\J=\mathcal{J}(\A,\Pe,N)$ is defined over $k$.
\end{theorem}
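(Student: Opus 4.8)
The plan is to reduce to the split curve $X'=\mathbb{P}^1_{k'}$ and there classify the admissible modules by a direct, characteristic-free argument that replaces the characteristic-zero input [Kn1, VII.(3.1.)] used in Theorem 2. Let $k'$ be a splitting field of the quaternion division algebra $D_0$ associated to $X$, so that $[k':k]=2$ and $X'=X\times_k k'\cong\mathbb{P}^1_{k'}$. Since $A$ is a division algebra of degree $3$ and $\gcd(2,3)=1$, the algebra $A'=A\otimes_k k'$ is again a central division algebra of degree $3$ over $k'$, and $\A\otimes\mathcal{O}_{X'}=A'\otimes_{k'}\mathcal{O}_{X'}$. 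As in Corollary 3 it suffices to prove that $\J\otimes\mathcal{O}_{X'}$ is globally free as an $\mathcal{O}_{X'}$-module; the global freeness of $\J$ over $\sts$, and hence the fact that it is defined over $k$, will then follow by descent (via [Pu2, Lemma 2]).

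The heart of the argument is the classification of $\Pe'=\Pe\otimes\mathcal{O}_{X'}\in{\rm Pic}_l(A'\otimes\mathcal{O}_{X'})$. First I would apply Grothendieck's splitting theorem to write $\Pe'\cong\bigoplus_{i=1}^{9}\mathcal{O}_{X'}(a_i)$ with $a_1\ge\dots\ge a_9$, and note that the left $A'$-action is a $k'$-algebra homomorphism $\rho\colon A'\to H^0(X',\mathcal{E}nd(\Pe'))$ with $\rho(1)=\mathrm{id}$. Because $H^0(\mathbb{P}^1,\mathcal{O}(n))\neq 0$ only when $n\ge 0$, every global endomorphism carries the summand $\mathcal{O}_{X'}(a_i)$ into $\bigoplus_{a_j\ge a_i}\mathcal{O}_{X'}(a_j)$; hence each subbundle $\Pe'_{\ge c}=\bigoplus_{a_i\ge c}\mathcal{O}_{X'}(a_i)$ is a left $A'$-submodule of $\Pe'$. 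At the generic point $\Pe'$ becomes the regular module $A'\otimes_{k'}k'(X')$ over the division algebra $A'\otimes_{k'}k'(X')$, which is simple, so each $\Pe'_{\ge c}$ is $0$ or all of $\Pe'$. This forces all $a_i$ to coincide, say $a_i=a$; then $\Pe'\otimes\mathcal{O}_{X'}(-a)$ is a trivial bundle whose $A'$-action is through constant matrices $H^0(\mathcal{E}nd(\mathcal{O}_{X'}^9))=M_9(k')$, giving a left $A'$-module of $k'$-dimension $9=\dim_{k'}A'$, necessarily the regular module $A'$. Therefore $\Pe'\cong(A'\otimes\mathcal{O}_{X'})\otimes\mathcal{O}_{X'}(a)$.

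Finally the normalisation $N_\A(\Pe)\cong\sts$ base-changes to $N_{A'\otimes\mathcal{O}_{X'}}(\Pe')\cong\mathcal{O}_{X'}(3a)$ --- the norm being a cubic form --- so $\mathcal{O}_{X'}(3a)\cong\mathcal{O}_{X'}$ yields $a=0$ and $\Pe'\cong A'\otimes\mathcal{O}_{X'}$. Thus $\J\otimes\mathcal{O}_{X'}\cong\mathcal{J}(A',N')\otimes_{k'}\mathcal{O}_{X'}$ is a constant, hence globally free, Albert algebra over $X'$. To descend freeness to $X$ I would use Petersson's classification of bundles on a nonrational genus-zero curve [P]: the indecomposables are the $\lb(mP_0)$ and the rank-two bundles $\lb(mP_0)\otimes\E_0$, which base-change to $\mathcal{O}_{X'}(2m)$ and to $\mathcal{O}_{X'}(2m+1)^2$ respectively. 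Only $\sts=\lb(0\cdot P_0)$ becomes trivial on $X'$, so the freeness of $\J\otimes\mathcal{O}_{X'}$ forces every indecomposable summand of $\J$ to be $\sts$; hence $\J$ is globally free and, by [Pu2, Lemma 2], defined over $k$. I expect the one genuinely delicate point to be the filtration step: one must verify that the degree-filtration pieces $\Pe'_{\ge c}$ are honest $\mathcal{O}_{X'}$-subbundles preserved by $\rho$, so that their triviality can be tested at the generic point against the simplicity of the regular module over the division algebra $A'\otimes_{k'}k'(X')$.
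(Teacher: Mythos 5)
Your proof is correct, but it takes a genuinely different route from the paper's. The paper base-changes to a \emph{cubic} splitting field $l$ of $A$, so that $Y=X\times_k l$ stays nonrational while $\A\otimes\mathcal{O}_Y\cong {\rm Mat}_3(\mathcal{O}_Y)$; it then reads off the two possible $\sts$-module shapes of $\Pe$ from the classification of rank-one left ${\rm Mat}_3$-modules over a genus-zero curve, computes $H^0(X,\E nd_X(\Pe))$ in each case, and uses the block-triangularity coming from $H^0(X,\lb(nP_0))=0$ for $n<0$ to project the structure map $\varphi\colon A\to H^0(X,\E nd_X(\Pe))$ onto a corner, producing a forbidden homomorphism $A\to{\rm Mat}_3(k)$ unless all twisting integers vanish. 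You instead base-change to a \emph{quadratic} splitting field $k'$ of $D_0$, so the curve becomes $\mathbb{P}^1_{k'}$ while $A'=A\otimes_k k'$ remains division (coprimality of $2$ and $3$), and you replace the corner-projection contradiction by the observation that the degree filtration $\Pe'_{\ge c}$ of the Grothendieck decomposition is $\rho(A')$-stable and must therefore be $0$ or everything by simplicity of the generic fibre over the division algebra $A'\otimes k'(X')$; the norm condition then kills the remaining twist, and Krull--Schmidt descends triviality to $X$. Both arguments exploit the same underlying tension (a degree-$3$ division algebra cannot act on a sum of line bundles of unequal degrees on a genus-zero curve), but yours is more self-contained --- it avoids the citation of the module classification behind Example 2(a) and, as you note, the characteristic-zero input [Kn1, VII.(3.1.)] of Theorem 2, since your filtration argument reproves that classification directly --- at the cost of needing the (standard, and correct) facts that $A\otimes_k k'$ and $A'\otimes_{k'}k'(X')$ remain division. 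Two cosmetic points: the reduction you attribute to ``Corollary 3'' is really the pattern of Corollary 1 / Lemma 1(ii), and the final descent is cleanest phrased via Krull--Schmidt applied to $\Pe$ itself (only $\sts$ among the indecomposables $\lb(mP_0)$, $\lb(mP_0)\otimes\E_0$ becomes trivial on $X'$), exactly as you indicate.
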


\begin{proof} Let $l$ be a cubic splitting field of the algebra $A$, put $Y=X\times_k l$.
 Then $Y$ is still nonrational. For $\J=\mathcal{J}(\A,\Pe,N)$ we have
  $\J\otimes\mathcal{O}_{Y}= \mathcal{J}(\A\otimes \mathcal{O}_{Y},\Pe\otimes \mathcal{O}_{Y},N\otimes \mathcal{O}_{Y})$
  and $\A\otimes \mathcal{O}_{Y}\cong {\rm Mat}_3(\mathcal{O}_{Y})$. Now
 $ \Pe\otimes \mathcal{O}_{Y}$ is isomorphic to

 \smallskip
\[
\left [\begin {array}{ccc}
\lb(-s_1P_0)\otimes\E_0^\vee & \lb((2s_1+1)P_0)\\
\lb(-s_1P_0)\otimes\E_0^\vee &  \lb((2s_1+1)P_0)  \\
\lb(-s_1P_0)\otimes\E_0^\vee  &  \lb((2s_1+1)P_0)  \\
\end {array}\right ]
\]

\smallskip or
\[
\left [\begin {array}{ccc}
\lb(-l_1P_0) & \lb(-l_2P_0)  & \lb((l_1+l_2)P_0)\\

\lb(-l_1P_0) & \lb(-l_2P_0) & \lb((l_1+l_2)P_0)  \\

\lb(-l_1P_0)  &  \lb(-l_2P_0)  &  \lb((l_1+l_2)P_0)  \\
\end {array}\right ],
\]
hence
$$\Pe\cong (\lb(-s_1P_0)\otimes\E_0^\vee)^3 \oplus \lb((2s_1+1)P_0)^3$$
or
$$\Pe\cong \lb(-l_1P_0)^3\oplus \lb(-l_2P_0)^3 \oplus \lb((l_1+l_2)P_0)^3$$
as $\sts$-module.
Suppose the first. The left $\A$-module structure of $\Pe$ amounts to a homomorphism $\A\to \E nd_X(\Pe)$
of $\sts$-algebras. We have

\smallskip
\[ \E nd_X(\Pe)\cong
\left [\begin {array}{ccc}
{\rm Mat}_3(\E nd_X(\E_0)) &  {\rm Mat}_3(\lb(-(3s_1+1)P_0)\otimes \E_0^\vee) \\
{\rm Mat}_3(\lb((3s_1+1)P_0)\otimes \E_0) &  {\rm Mat}_3(\sts) \\
\end {array}\right ].
\]

\smallskip
Now $(\lb((3s_1+1)P_0)\otimes \E_0)\otimes \mathcal{O}_{X'}\cong   \mathcal{O}_{X'}((3s_1+1)2+1)^2$ for $X'=X\otimes_k k'$,
$k'$ a quadratic splitting field of $D_0$, so if $(3s_1+1)2+1=6s_1+3<0$ then
$$H^0(X,\lb((3s_1+1)P_0)\otimes \E_0)=0$$ and
by passing to the global sections, the above homomorphism induces a homomorphism of $k$-algebras of the type
 \[\varphi:A\to
\left [\begin {array}{ccc}
A &  {\rm Mat}_3(*) \\
0 &  {\rm Mat}_3(k) \\
\end {array}\right ].
\]
Following $\varphi$ with the projection to the lower right-hand corner of the block-matrices yields a homomorphism
$A\to  {\rm Mat}_3(k)$, a contradiction. The same argument works for $6s_1+3>0$. Thus $6s_1+3=0$ which is a contradiction, because
$s_1$ is an integer. We conclude that $\Pe$ can never have this form. Thus
$$\Pe\cong \lb(-l_1P_0)^3\oplus \lb(-l_2P_0)^3 \oplus \lb((l_1+l_2)P_0)^3$$
as $\sts$-module. The left $\A$-module structure of $\Pe$ again amounts to a homomorphism $\A\to \E nd_X(\Pe)$
of $\sts$-algebras. We have

\smallskip
\[\E nd_X(\Pe)\cong
\left [\begin {array}{ccc}
 {\rm Mat}_3(\sts)  &  {\rm Mat}_3(\lb(-l_1+l_2)P_0) &   {\rm Mat}_3(\lb(-(2l_1+l_2)P_0) \\
{\rm Mat}_3(\lb(l_1-l_2)P_0) &  {\rm Mat}_3(\sts)  &  {\rm Mat}_3(\lb(-(2l_2+l_1)P_0) \\
 {\rm Mat}_3(\lb((2l_1+l_2)P_0)  &  {\rm Mat}_3(\lb((2l_2+l_1)P_0)  & {\rm Mat}_3(\sts) \\
\end {array}\right ].
\]

Say, $l_1-l_2<0$, then

\begin{align*}
& H^0(X,\E nd_X(\Pe))\cong
\\ &
\left [\begin {array}{ccc}
 {\rm Mat}_3(k)  &  H^0(X,{\rm Mat}_3(\lb(-l_1+l_2)P_0)) &   H^0({\rm Mat}_3(\lb(-(2l_1+l_2)P_0)) \\
0 &  {\rm Mat}_3(k)  &  H^0({\rm Mat}_3(\lb(-(2l_2+l_1)P_0)) \\
 H^0({\rm Mat}_3(\lb((2l_1+l_2)P_0))  &  H^0({\rm Mat}_3(\lb((2l_2+l_1)P_0))  & {\rm Mat}_3(k) \\
\end {array}\right ].
\end{align*}
Following $\varphi$ with the projection to the upper left-hand corner of the block-matrices yields a homomorphism
$A\to  {\rm Mat}_3(k)$, a contradiction. The same argument works for $l_1-l_2>0$. Thus $l_1-l_2=0$, and
\begin{align*}
& H^0(X,\E nd_X(\Pe))\cong
\\ &
\left [\begin {array}{ccc}
 {\rm Mat}_3(k)  &   {\rm Mat}_3(k) &   H^0(X,{\rm Mat}_3(\lb(-3l_1P_0)) \\
{\rm Mat}_3(k) &  {\rm Mat}_3(k)  &  H^0({\rm Mat}_3(\lb(-3l_1P_0)) \\
 H^0(X,{\rm Mat}_3(\lb(3l_1P_0))  &  H^0(X,{\rm Mat}_3(\lb(3l_1P_0))  & {\rm Mat}_3(k) \\
\end {array}\right ].
\end{align*}
We conclude that
 $$\Pe\cong \lb(-l_1P_0)^3\oplus \lb(-l_1P_0)^3 \oplus \lb(2l_1P_0)^3.$$
 Try to assume that  $l_1<0$, then $H^0(X,{\rm Mat}_3(\lb(3l_1P_0))=0$ and
\begin{align*} &
 H^0(X,\E nd_X(\Pe))\cong
 \\ &
\left [\begin {array}{ccc}
 {\rm Mat}_3(k)  &   {\rm Mat}_3(k) &   H^0(X,{\rm Mat}_3(\lb(-3l_1P_0)) \\
 {\rm Mat}_3(k) &  {\rm Mat}_3(k)  &  H^0({\rm Mat}_3(\lb(-3l_1P_0)) \\
 H^0(X,{\rm Mat}_3(\lb(3l_1P_0))  &  0  & {\rm Mat}_3(k) \\
\end {array}\right ].
\end{align*}
The same argument yields a contradiction. The same works with $l_1>0$, so $l_1=0$ and we get
 $$\Pe\cong \sts^9$$
 is globally free. Hence $\J$ is defined over $k$.
\end{proof}

\begin{corollary}  Let $X$ be a  nonrational curve over a field $k$ of characteristic zero.
Let $\J$ be an Albert algebra over $X$ which contains $\A^+$ as a subalgebra, where $\A=A\otimes\sts$
 is an Azumaya algebra of rank 9 over $X$ which is defined over $k$. Then $\J$ is defined over $k$
  or $\J=\mathcal{J}(\A,\Pe,N)$ as in Example 2 (a).
\end{corollary}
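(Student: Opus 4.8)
The plan is to reduce to the two Wedderburn types of $A$ and then quote what has already been proved. First I would use the general structure result for first Tits constructions recalled above: since $\J$ contains $\A^+$ as a subalgebra and $\A$ is Azumaya, there exist $\Pe\in{\rm Pic}_l\,\A$ with $N_\A(\Pe)\cong\sts$ and a cubic norm $N$ on $\Pe$ such that $\J\cong\mathcal{J}(\A,\Pe,N)$. Thus the entire problem is to pin down the admissible $\Pe$, and this is controlled by the $k$-algebra $A$.

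Next I would observe that $A$, being a central simple $k$-algebra of degree $3$, is by Wedderburn either a central division algebra of degree $3$ or isomorphic to ${\rm Mat}_3(k)$; no intermediate case occurs, since $A\cong{\rm Mat}_n(D)$ with $(n\cdot\deg D)^2=9$ forces $n\cdot\deg D=3$. In the division case I would simply invoke Theorem 3, which already states that every first Tits construction $\mathcal{J}(\A,\Pe,N)$ with such an $\A$ is defined over $k$; this gives the first alternative with no further work.

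The content lies in the split case $A\cong{\rm Mat}_3(k)$, where $\A\cong{\rm Mat}_3(\sts)=\E nd_X(\sts^3)$, so that $\A=\E nd_X(\E)$ with $\E=\sts^3$. Here I would invoke the module description underlying Example 2 (a) ([Ach1, 4.4]): every admissible $\Pe$ is of the form $\Pe=\h om_X(\F,\E)$ for a locally free $\sts$-module $\F$ of constant rank $3$, and the normalization $N_\A(\Pe)\cong\sts$ is equivalent to the existence of an isomorphism $\alpha:{\rm det}\,\E\to{\rm det}\,\F$. Since the indecomposable bundles over the nonrational genus-zero curve $X$ are exactly the line bundles $\lb(mP_0)$ and the rank-two bundles $\lb(mP_0)\otimes\E_0$, the Krull-Schmidt theorem leaves only $\F\cong\lb(l_1P_0)\oplus\lb(l_2P_0)\oplus\lb(l_3P_0)$ or $\F\cong\lb(s_1P_0)\otimes\E_0\oplus\lb(s_2P_0)$. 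These are precisely the two shapes of $\F$ allowed in Example 2 (a) for a decomposable $\E$, so $\mathcal{J}(\A,\Pe,N)$ is the Albert algebra displayed there in cases (iii) and (iv), which is the second alternative.

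I expect the only real difficulty to be the bookkeeping in the split case, namely confirming that $N_\A(\Pe)\cong\sts$ is equivalent to ${\rm det}\,\E\cong{\rm det}\,\F$ and that no $\Pe$ outside the family $\h om_X(\F,\E)$ can occur. Both points are already contained in the rank-one classification recorded in Example 2 (a), so in the end the corollary is a clean combination of Theorem 3 for the division case and Example 2 (a) for the split case, joined by the Wedderburn dichotomy for $A$.
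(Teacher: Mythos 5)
Your proposal is correct and matches the argument the paper leaves implicit: the corollary is stated without proof, and the intended reasoning is precisely your Wedderburn dichotomy for the degree-$3$ algebra $A$, quoting Theorem 3 when $A$ is a division algebra and Example 2 (a) with $\E=\sts^3$ (so $\A\cong\E nd_X(\sts^3)$) when $A$ is split. Your verification that the admissible $\Pe$ are exactly $\h om_X(\F,\E)$ with ${\rm det}\,\F\cong{\rm det}\,\E$, and that Krull--Schmidt over a genus-zero curve limits $\F$ to the two shapes listed there, is the right bookkeeping and nothing further is needed.
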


\begin{corollary} Let $X$ be a  nonrational curve over a field $k$ of characteristic zero.
Let $\J$ be an Albert algebra over $X$
which contains $\A^+$ as a subalgebra, where $\A$ is an Azumaya algebra of constant rank 9 over $X$
 such that $\A(\xi)$ is a division algebra.
Then $\J$ is defined over $k$.
\end{corollary}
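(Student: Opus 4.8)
The plan is to reduce the statement to Theorem 3 by showing that the hypothesis on the generic fibre forces $\A$ into the division type that is defined over the base field. First, since $\J$ is an Albert algebra containing $\A^+$ with $\A$ an Azumaya algebra of constant rank $9$, the embedding statement at the end of the first Tits construction subsection produces a locally free left $\A$-module $\Pe$ of rank one and a norm $N$ with $\J\cong\mathcal{J}(\A,\Pe,N)$. So it suffices to prove that $\A\cong A\otimes_k\sts$ with $A$ a central simple division algebra over $k$ of degree $3$; Theorem 3 then immediately gives that $\J$ is defined over $k$.

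The key observation is that the hypothesis ``$\A(\xi)$ a division algebra'' excludes the split type. Indeed, if $\A\cong\E nd_X(\E)$ for a locally free $\sts$-module $\E$ of constant rank $3$, then localizing at the generic point $\xi$ yields $\A(\xi)\cong\E nd_{k(\xi)}(\E(\xi))\cong{\rm Mat}_3(k(\xi))$, which is not a division algebra. Hence $\A$ is not of this form, and its Brauer class in ${\rm Br}(X)$, which has order dividing $3$ since $\A$ has degree $3$, is nontrivial.

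Next I would identify this class with one coming from $k$. Over a nonrational curve $X$ of genus zero the pullback ${\rm Br}(k)\to{\rm Br}(X)$ is surjective with kernel generated by the class $[D_0]$ of the associated quaternion division algebra; as $[D_0]$ is $2$-torsion, it does not meet the $3$-torsion, so this map is an isomorphism on $3$-primary parts and ${\rm Br}(X)[3]\cong{\rm Br}(k)[3]$. Thus $[\A]$ is the image of $[A]$ for some central simple $k$-algebra $A$ of degree $3$, and because $\A(\xi)$ is division, $A$ itself is a division algebra. Now $\A$ and $A\otimes_k\sts$ share the same Brauer class over $X$ and both have constant rank $9$, so they are Morita equivalent through a rank-one right $A\otimes_k\sts$-module $\mathcal{M}$ with $\A\cong\E nd_{A\otimes_k\sts}(\mathcal{M})$. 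Since $A$ is a division algebra, the argument used in Proposition 1(ii)---that over a curve of genus zero the sheaf of a division algebra has trivial Picard group---forces $\mathcal{M}\cong A\otimes_k\sts$, whence $\A\cong A\otimes_k\sts$ as required.

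The main obstacle is precisely this last step: pinning down the global isomorphism type of $\A$, not merely its Brauer class, over a curve that need not carry a rational point. The Brauer-group computation determines $\A$ only up to Morita equivalence, and one must invoke the triviality of ${\rm Pic}_r\,(A\otimes_k\sts)$ for division $A$---the same mechanism exploited in Proposition 1(ii), now for a degree-$3$ division algebra---to upgrade this to an honest isomorphism defined over $k$. Once $\A\cong A\otimes_k\sts$ is in hand, Theorem 3 closes the argument with no further input.
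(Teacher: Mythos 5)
Your reduction to Theorem 3 is sound, and the Brauer-group computation for the conic (surjectivity of ${\rm Br}(k)\to{\rm Br}(X)$ with kernel generated by the $2$-torsion class $[D_0]$, hence an isomorphism on $3$-torsion) correctly yields $[\A]=[A\otimes_k\sts]$ in ${\rm Br}(X)$ for some central division $k$-algebra $A$ of degree $3$. The genuine gap is exactly where you locate it, and the mechanism you invoke does not close it. Passing from Brauer equivalence of two rank-$9$ Azumaya algebras to an isomorphism requires knowing that every locally free right $A\otimes_k\sts$-module of rank one is a line-bundle twist of the free module; by Morita theory this is equivalent to asserting that every Azumaya algebra of constant rank $9$ in the class of $A\otimes_k\sts$ is isomorphic to $A\otimes_k\sts$. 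The paper flags precisely this as unresolved in the remark following the corollary (``It does not seem clear if, for a nonrational curve, there are Azumaya algebras of constant rank 9 other than those defined over $k$ or those of the type considered in Example 2 (a)''), so you are assuming the open classification. Moreover, the ``mechanism of Proposition 1(ii)'' does not transfer: that argument rests on [P, 4.5], a statement about composition algebras (the only rank-one module of norm one over a quaternion division algebra sheaf is the free one), and no degree-$3$ analogue is cited or proved anywhere; the laborious global-sections computation in the proof of Theorem 3 exists precisely because such a classification is unavailable.

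The paper's own proof avoids the issue entirely and is worth contrasting with your plan. It argues by contradiction: if $\J$ is not defined over $k$, then Theorem 3 (combined with your observation that $\A(\xi)$ being division rules out $\A\cong A\otimes_k\sts$ with $A$ split) shows $\A$ is not defined over $k$; base-changing to $X'=X\times_k k'$ for a quadratic splitting field $k'$ of $D_0$, where $X'\cong\mathbb{P}^1_{k'}$ and the rank-$9$ Azumaya algebras are classified [Kn2], forces $\A\otimes\mathcal{O}_{X'}\cong\E nd_{X'}(\E')$, hence $\A(\xi)\otimes_K K'\cong{\rm Mat}_3(K')$. But a quadratic extension $K'/K$ cannot split a division algebra of index $3$. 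This index argument is the essential ingredient your proposal is missing; even within your Brauer-theoretic framing you would need it, or the open classification of rank-one $A\otimes_k\sts$-modules, to upgrade the equality of Brauer classes to the isomorphism $\A\cong A\otimes_k\sts$.
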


\begin{proof} Let $k'$ be a quadratic splitting field of the quaternion algebra associated to $X$, put $X'=X\times_k k'$.
Let $K$ be the function field of $X$ and $K'=K\otimes_k k'$ the one of $X'$.
If $\J$ is not defined over $k$, then $\A$ is not defined over $k$ (Theorem 3) and
 $\A\otimes \mathcal{O}_{X'}\cong \E nd_X(\E')$ with $\E'$ not globally free.
Our assumption that
 $\A(\xi)$ is a division algebra excludes the case that $\A\cong \E nd_X(\E)$  for some $\E$ of rank 3.
Thus $\A\not\cong \E nd_X(\E)$ and
 $\A\otimes \mathcal{O}_{X'}\cong \E nd_X(\E')$ implies that  $\A(\xi)\otimes_{K} K'\cong {\rm Mat}_3(K')$.
 Since $K'$ is a quadratic field extension of $K$ and hence cannot be a splitting field of the central simple
 division algebra $\A(\xi)$, this is a contradiction.
\end{proof}

It does not seem clear if, for a nonrational curve, there are Azumaya algebras of constant rank 9 other than those defined over $k$ or those of the
type considered in Example 2 (a). If not, the above results would completely classify the Albert algebras
over a nonrational curve over a field of characteristic zero which are first Tits constructions.
The underlying module structure of the Albert algebra $\mathcal{H}_3(\mathcal{C},\Gamma)$ in Example 1 (7)
is not covered in the cases of Example 2 (a). Thus it cannot be a first Tits construction starting with one of the Azumaya
algebras we considered so far. It might be possible to obtain it as a Tits process.

\section{Albert algebras over curves of genus zero which are obtained by the Tits process}

Unless otherwise specified, we keep the notations of Section 3.
\subsection{}  Let $X=\mathbb{P}_k^1$ and $k$ be a field of characteristic 0.
 Let $K$ be a quadratic field extension of $k$ with ${\rm Gal}(K/k)=\langle\omega\rangle$ and put $X'=X\times_k K$.
To simplify notation, we write $\sts(m)$ instead of $\lb(mP_0)$.

\begin{theorem} Let $(B,*)$ be a central simple associative algebra of degree 3 over $k$
with involution of the second kind. Suppose that $K={\rm Cent}\,B$ and that $B$ is a division algebra over $K$.
Let $*_\B=*\otimes \mathcal{O}_{X'}$.

Suppose $\J$ is an Albert algebra over $X$ containing $\h(\B,*_\B)\cong \h(B,*)\otimes_k\sts$ as a subalgebra.
Then $\J\cong\mathcal{J}(\B, \h (\B,*_\B),\Pe,N,*)$ is defined over $k$.
\end{theorem}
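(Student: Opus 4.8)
The plan is to follow the same structural strategy used in Theorem 2 and Theorem 3: reduce the problem to showing that $\Pe$ is globally free as an $\sts$-module, since a Tits process $\mathcal{J}(\B,\h(\B,*_\B),\Pe,N,*)$ whose underlying module is globally free is automatically defined over $k$ by [Pu2, Lemma 1]. Since $\J$ contains $\h(\B,*_\B)\cong \h(B,*)\otimes_k\sts$ as a subalgebra, by the last displayed assertion in the Tits process section there exist an $\h(\B,*_\B)$-admissible left $\B$-module $\Pe$ of rank $1$ with $N_\B(\Pe)\cong\sts'$ and an admissible pair $(N,*)$ with $\J\cong\mathcal{J}(\B,\h(\B,*_\B),\Pe,N,*)$. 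So everything reduces to determining the possible module structures of $\Pe\in{\rm Pic}_l\,\B$.

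First I would analyze ${\rm Pic}_l\,\B$ where $\B=B\otimes_k\sts'$ with $B$ a division algebra of degree $3$ over $K$ and $\sts'=\sts\otimes_k K=\mathcal{O}_{X'}$, so $X'=\mathbb{P}^1_K$. Because $B$ is defined over $K$ and ${\rm Pic}\,X'\cong\mathbb{Z}$ via $\sts(m)$, the classification [Kn1, VII.(3.1.)] used in Theorem 2 applies: every rank-one left $\B$-module has the form $\Pe\cong B\otimes_K\sts'(m)$ for some $m\in\mathbb{Z}$, i.e. $\Pe\cong (B\otimes_k\sts)\otimes\sts'(m)$ as a module over $\mathcal{O}_{X'}$. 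The norm condition $N_\B(\Pe)\cong\sts'$ then forces, exactly as in the proof of Theorem 2, the relation $\sts'\cong N_\B(\Pe)\cong N_B(B)\otimes\sts'(3m)\cong\sts'(3m)$, whence $3m=0$ and $m=0$. Thus $\Pe\cong\B=B\otimes_k\sts$ as a left $\B$-module, which is in particular globally free of rank $9$ as an $\sts$-module (after descent from $\mathcal{O}_{X'}$).

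The main obstacle I anticipate is the descent bookkeeping caused by the involution of the second kind: $\B$ and $\Pe$ live over $\mathcal{O}_{X'}=\sts'$, which is a rank-two extension of $\sts$, not over $\sts$ itself, and ${\rm Pic}_l\,\B$ must be handled together with the semilinear action of $\omega={\rm Gal}(K/k)$. The admissibility hypothesis on $\Pe$ encodes a compatibility $\Pe^{*_\B}\cong\Pe^\vee$ between $\Pe$ and its $\omega$-conjugate, and I must check this is consistent with $m=0$ (it is, since $\Pe\cong\B$ is manifestly selfdual and $\omega$-stable). Concretely I would verify that the involution $*$ on $\Pe\cong\B$ can be taken to be $*_\B$ itself, so that the admissible pair $(N,*)$ is the one induced from $(B,*)$ over $k$.

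Once $\Pe\cong\B$ is established, the conclusion follows immediately. The underlying $\sts$-module of $\mathcal{J}(\B,\h(\B,*_\B),\Pe,N,*)=\h(\B,*_\B)\oplus\Pe$ is $\big(\h(B,*)\otimes_k\sts\big)\oplus\big(B\otimes_k\sts\big)$, which is globally free; since all the structure maps $N_\B$, $\sharp_\B$, $N$, and the involution descend from $k$, the whole Tits process is obtained by base change from the classical Tits construction $\mathcal{J}(B,\h(B,*),B,N,*)$ over $k$. Hence $\J$ is defined over $k$, as claimed.
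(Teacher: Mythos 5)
Your proposal is correct and follows essentially the same strategy as the paper: both arguments base change to $X'=\mathbb{P}^1_K$, show that the relevant rank-one module is forced to be trivial by the norm condition (so that the underlying $\sts$-module of $\J$ is globally free), and conclude via [Pu2, Lemma 1]. The only difference is packaging: the paper recognizes $\J\otimes\mathcal{O}_{X'}$ as a first Tits construction starting with $\B^+$ (via [Pu2, Lemma 4] and [Pu2, Proposition 3]) and then cites Theorem 2 wholesale, whereas you stay with the Tits-process presentation and inline the same Picard-group computation ($\Pe\cong B\otimes_K\mathcal{O}_{X'}(m)$ with $3m=0$) directly on the admissible module $\Pe$; this spares you the descent bookkeeping for the involution that you flag, since global freeness alone suffices for the final step.
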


\begin{proof} With $(B,*)$  a central simple algebra of degree 3 over $k$
with involution of the second kind [KMRT, p.~20], $\B=B\otimes_K\mathcal{O}_{X'}$ is an Azumaya algebra of rank 9 over $X'$.
The Albert algebra $\J\otimes \mathcal{O}_{X'}$ contains the subalgebra
$\B^+\cong B^+\otimes \mathcal{O}_{X'}$ [Pu2, Lemma 4], thus is a first Tits construction
starting with $\B^+$ [Pu2, Proposition 3] and therefore defined over $K$ (Theorem 2). As such it must be globally
free as $\mathcal{O}_{X'}$-module.
Therefore $\J$ is globally free as $\sts$-module and thus defined over $k$ [Pu2, Lemma 1].
\end{proof}

\begin{lemma}
If the vector bundle
$$\E=\mathcal{O}_{X'}(m_1)\oplus\mathcal{O}_{X'}(m_2)\oplus\mathcal{O}_{X'}(m_3)$$
admits a regular hermitian form $h$, then
$$(\E,h)\cong\langle a\rangle\oplus(\mathcal{O}_{X'}(m)\oplus\mathcal{O}_{X'}(-m),\mathbb{H}),$$
or $\E$ is globally free and $(\E,h)\cong\langle a,b,c\rangle$ with $a,b,c\in K^\times$.
\end{lemma}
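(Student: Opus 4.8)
The plan is to read the underlying bundle off from regularity, dispose of the globally free case by reduction to a hermitian form over a field, and treat the remaining case by an orthogonal splitting governed by a degree count on $X'=\mathbb{P}^1_K$.

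First I would use that a regular hermitian form is exactly an isomorphism $h\colon\E\xrightarrow{\sim}{}^\omega\check{\E}$, where $\check{\E}=\mathcal{H}om(\E,\mathcal{O}_{X'})$ and ${}^\omega$ denotes the conjugate bundle under $\omega$. Since each $\mathcal{O}_{X'}(m_i)$ is defined over $k$, we have ${}^\omega\check{\E}\cong\bigoplus_i\mathcal{O}_{X'}(-m_i)$, so $\bigoplus_i\mathcal{O}_{X'}(m_i)\cong\bigoplus_i\mathcal{O}_{X'}(-m_i)$. By the Krull--Schmidt theorem on $X'$ this says that the multiset $\{m_1,m_2,m_3\}$ is invariant under negation; for three integers this forces, after reordering, either $m_1=m_2=m_3=0$ or $\{m_1,m_2,m_3\}=\{m,0,-m\}$ with $m>0$.

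In the globally free case $\E\cong\mathcal{O}_{X'}^3$, passing to global sections turns $h$ into a regular hermitian form on the $K$-space $H^0(X',\mathcal{O}_{X'})^3\cong K^3$, for the involution $\omega$ with fixed field $k$. As the characteristic is $0$, such a form diagonalizes, giving $(\E,h)\cong\langle a,b,c\rangle$ with $\omega$-fixed diagonal entries, i.e. $a,b,c\in k^\times\subseteq K^\times$. In the remaining case I would write $\E=E_1\oplus E_2\oplus E_3$ with $E_1=\mathcal{O}_{X'}(m)$, $E_2=\mathcal{O}_{X'}$, $E_3=\mathcal{O}_{X'}(-m)$, and record that the $(j,i)$-block of $h$ is a section of $\mathcal{H}om(E_i,\check{E}_j)\cong\mathcal{O}_{X'}(-m_i-m_j)$. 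Degree reasons kill the blocks of negative degree, so $h$ has the shape
$$h=\begin{pmatrix}0&0&c\\0&a&d\\\bar c&\tilde d&e\end{pmatrix},$$
with $c$ a constant, $a=\bar a\in k$, and $d,\tilde d\in H^0(\mathcal{O}_{X'}(m))$, $e\in H^0(\mathcal{O}_{X'}(2m))$. Since $\det\E\cong\mathcal{O}_{X'}\cong\det{}^\omega\check{\E}$, the induced map $\det h$ is a constant, equal up to sign to $a\,c\bar c$; regularity therefore forces $a\in k^\times$ and $c\in K^\times$.

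As $a$ is then a nonvanishing constant, $h|_{E_2}\cong\langle a\rangle$ is regular, so $E_2$ splits off orthogonally as $\E=\langle a\rangle\perp W$ with $W=E_2^\perp$ a rank-two direct summand. By Krull--Schmidt $W\cong\mathcal{O}_{X'}(m)\oplus\mathcal{O}_{X'}(-m)$, and rerunning the degree count on $W$ (where $m>0$ forces the self-pairing of the $\mathcal{O}_{X'}(m)$ summand, which would lie in $H^0(\mathcal{O}_{X'}(-2m))$, to vanish) shows that $\mathcal{O}_{X'}(m)$ is a totally isotropic line of half the rank, so $W$ carries the hyperbolic form $\mathbb{H}$. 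This yields $(\E,h)\cong\langle a\rangle\oplus(\mathcal{O}_{X'}(m)\oplus\mathcal{O}_{X'}(-m),\mathbb{H})$. The hard part will be the bookkeeping of the last two paragraphs: identifying correctly the Hom-bundle in which each block of $h$ lives so that the cohomological vanishing pins down the anti-triangular shape, and then verifying that the orthogonal complement $W$ is genuinely a sub-bundle and direct summand on which the induced form is again regular. Once the shape and the non-vanishing of the two constant entries $a$ and $c$ are in hand, the orthogonal splitting and the identification of $W$ with $\mathbb{H}$ are routine.
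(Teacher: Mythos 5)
Your proposal is correct and follows essentially the same route as the paper: the key step in both is to apply Krull--Schmidt to the isomorphism $\E\cong{}^\omega\check{\E}$ forced by regularity, together with the observation that every $\mathcal{O}_{X'}(m)$ is already defined over $X$ (so ${}^\omega\mathcal{O}_{X'}(m)\cong\mathcal{O}_{X'}(m)$), which pins the multiset $\{m_1,m_2,m_3\}$ down to $\{0,0,0\}$ or $\{m,0,-m\}$. The paper stops there and leaves the identification of the form implicit, whereas you carry out the degree/block bookkeeping and the hyperbolic splitting explicitly; those added details are sound.
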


\begin{proof} Since $\E\cong\,^\omega\E^\vee$, the Theorem of Krull-Schmidt implies that either
$\mathcal{O}_{X'}(m_i)\cong \, ^\omega\mathcal{O}_{X'}(m_i)^\vee$ for all $i$ or that $\mathcal{O}_{X'}(m_1)\cong \, ^\omega
\mathcal{O}_{X'}(m_1)^\vee$
and $\mathcal{O}_{X'}(m_2)\cong^\omega\mathcal{O}_{X'}(m_3)^\vee$. Now
 $\mathcal{O}_{X'}(m)\cong \, ^\omega\mathcal{O}_{X'}(m)$ for all $m\in\mathbb{Z}$ since the bundle
$\mathcal{O}_{X'}(m)$ is already defined over $X$ and thus $\mathcal{O}_{X'}(m)\cong \, ^\omega\mathcal{O}_{X'}(m)^\vee$
iff $m=0$.
\end{proof}

This leads to the following result:

\begin{theorem} The Azumaya algebra
 $$\B=\E nd_{X'} (\mathcal{O}_{X'}\oplus\mathcal{O}_{X'}(m)\oplus\mathcal{O}_{X'}(-m))$$
 of rank 9 over $X'$ permits an involution $*_\B$, which is adjoint to the hermitian form $h$ defined on
 $\mathcal{O}_{X'}\oplus\mathcal{O}_{X'}(m)\oplus\mathcal{O}_{X'}(-m)$.
 The Tits process $\mathcal{J}(\B, \h (\B,*_\B),\Pe,N,*)$ has the following underlying $\sts$-module structure:
$$ \h (\B,*_\B)\cong \mathcal{O}_{X'}^3\oplus \mathcal{O}_{X'}(m)^2\oplus \mathcal{O}_{X'}(-m)^2\oplus
\mathcal{O}_{X'}(2m)\oplus \mathcal{O}_{X'}(-2m)$$
as $\sts$-module and
$$\begin{array}{l}
\Pe\cong \mathcal{O}_{X}(n_1)\oplus \sts(n_2)\oplus \sts(-(n_1+n_2))\\
\oplus \sts(-m+n_1)\oplus \sts(-m+n_2)\oplus \sts(-m-(n_1+n_2))\\
\oplus \sts(m+n_1)\oplus \sts(m+n_2)\oplus \sts(m-(n_1+n_2))\\
\oplus
\sts(-n_1)\oplus \sts(-n_2)\oplus \sts(n_1+n_2)\\
\oplus \sts(m-n_1)\oplus \sts(m-n_2)\oplus \sts(m+n_1+n_2)\\
\oplus \sts(-m-n_1)\oplus \sts(-m-n_2)\oplus \sts(-m+n_1+n_2).\\
\end{array}$$
as $\sts$-module.
\end{theorem}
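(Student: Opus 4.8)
The plan is to treat the three assertions separately, reducing each to the splitting of bundles on $X'=\mathbb{P}^1_K$ and to restriction of scalars along the projection $\pi\colon X'\to X$. For the involution, put $\E=\mathcal{O}_{X'}\oplus\mathcal{O}_{X'}(m)\oplus\mathcal{O}_{X'}(-m)$; then $\E\cong{}^\omega\E^\vee$, so the preceding Lemma furnishes a regular hermitian form $h\cong\langle a\rangle\perp\mathbb{H}$ on $\E$, with $\langle a\rangle$ on $\mathcal{O}_{X'}$ and the hyperbolic plane $\mathbb{H}$ on $\mathcal{O}_{X'}(m)\oplus\mathcal{O}_{X'}(-m)$. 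The involution $*_\B$ of $\B=\E nd_{X'}(\E)$ adjoint to $h$ is then the one asserted, and it is of the second kind since $h$ is $\omega$-hermitian, so $*_\B|_{\mathcal{O}_{X'}}=\omega$.

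For $\h(\B,*_\B)$ I would decompose $\B=\bigoplus_{i,j=1}^{3}\h om(L_j,L_i)$ into its nine homogeneous blocks, with $L_1=\mathcal{O}_{X'}$, $L_2=\mathcal{O}_{X'}(m)$, $L_3=\mathcal{O}_{X'}(-m)$, so the $(i,j)$-block is $\mathcal{O}_{X'}(\deg L_i-\deg L_j)$. Since $h$ pairs $L_2$ with $L_3$ and fixes $L_1$, the involution sends the $(i,j)$-block to the $(\sigma(j),\sigma(i))$-block for $\sigma=(2\ 3)$; thus the blocks $(1,1),(2,3),(3,2)$, of degrees $0,2m,-2m$, are $*_\B$-stable and the remaining six are interchanged in three pairs of equal degrees $0,m,-m$. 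On a stable block of degree $d$ the fixed elements form the descent $\sts(d)$, and each interchanged pair contributes through its graph $\{(v,v^{*_\B})\}$ one copy of $\mathcal{O}_{X'}(d)$, which as an $\sts$-module equals $\sts(d)^2$. Collecting degrees yields $\sts^{3}\oplus\sts(m)^{2}\oplus\sts(-m)^{2}\oplus\sts(2m)\oplus\sts(-2m)$, of $\sts$-rank $9$, the asserted structure.

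The heart of the matter is $\Pe$. Every rank-one left $\B$-module is $\Pe\cong\h om_{X'}(\F,\E)=\E\otimes\F^\vee$ for some rank-$3$ bundle $\F$ on $X'$, with $N_\B(\Pe)\cong\det\E\otimes\det\F^\vee$; as $\det\E\cong\mathcal{O}_{X'}$, the condition $N_\B(\Pe)\cong\mathcal{O}_{X'}$ forces $\deg\F=0$, say $\F\cong\mathcal{O}_{X'}(n_1)\oplus\mathcal{O}_{X'}(n_2)\oplus\mathcal{O}_{X'}(-(n_1+n_2))$, and a trivialisation of $\det\F$ gives the norm $N$. Admissibility of $(N,*)$ requires a nondegenerate $*_\B$-sesquilinear form on $\Pe$, i.e. $\Pe^{*_\B}\cong\Pe^\vee$; transporting $*_\B$ through the Morita equivalence and using $h$ to identify $\E$ with $\E^\vee$ turns this into $\F^\vee\cong\F$, so $\F$ carries a hermitian form and its weights $\{n_1,n_2,-(n_1+n_2)\}$ are stable under negation. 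Granting this, restriction of scalars computes the $\sts$-module: by the projection formula $\pi_*\Pe\cong(\sts\oplus\sts(m)\oplus\sts(-m))\otimes\pi_*\F^\vee$, and since $\pi_*\mathcal{O}_{X'}(c)\cong\sts(c)^2$ with the weights of $\F$ closed under negation, expanding against $\{0,m,-m\}$ reproduces the eighteen line bundles listed, split in the statement into the $\{n_1,n_2,-(n_1+n_2)\}$- and $\{-n_1,-n_2,n_1+n_2\}$-halves.

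I expect the self-duality step to be the principal obstacle: one must push the second-kind involution through the Morita correspondence carefully enough to see that admissibility is equivalent to $\F\cong\F^\vee$—and not to the vacuous ${}^\omega\F\cong\F$—since it is precisely this constraint that pairs each weight $n_i$ with $-n_i$ and so endows $\pi_*\Pe$ with the stated multiplicities. The remaining admissibility identities $\langle w,w^*\rangle\in\A$ and $N_\B(\langle w,w^*\rangle)=N(w)N(w)^{*_\B}$, for the form built from $h$ on $\E$ and the hermitian form on $\F$, are then a direct check governed by the universal properties of the norm and adjoint recorded in Section 1.5.
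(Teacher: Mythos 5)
Your argument takes a genuinely different route from the paper's on both computations. For $\h(\B,*_\B)$ the paper never decomposes $\B$ into blocks: it observes that $\h(\B,*_\B)\otimes\mathcal{O}_{X'}\cong\B$ as $\mathcal{O}_{X'}$-modules and reads off the $\sts$-structure by descent, since every line bundle on $X'=\mathbb{P}^1_K$ comes from $X$; your count of stable blocks versus interchanged pairs is a more explicit version of the same descent and gives the correct multiplicities. For $\Pe$ the difference is substantive. The paper base-changes the whole algebra: $\mathcal{J}(\B,\h(\B,*_\B),\Pe,N,*)\otimes\mathcal{O}_{X'}$ contains $\B^+$, hence is a first Tits construction $\mathcal{J}(\B,\Pe_0,N_0)$ with $\Pe_0\cong\h om_{X'}(\G,\E)$ and ${\rm det}\,\G\cong\mathcal{O}_{X'}$, so that $\Pe\otimes_{\sts}\mathcal{O}_{X'}\cong\Pe_0\oplus\Pe_0^\vee$ and the eighteen listed degrees are exactly those of $\Pe_0\oplus\Pe_0^\vee$, descended using [Pu2, Proposition 3]. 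Your projection formula instead yields each of the nine degrees of $\E\otimes\F^\vee$ with multiplicity two, and for general $n_1,n_2$ that multiset is \emph{not} the one in the statement; the two coincide precisely when the weights of $\F$ are stable under negation. Hence the step you yourself flag as the principal obstacle --- that admissibility forces $\F\cong\F^\vee$ through the Morita correspondence --- is not a deferrable refinement in your setup: without it your computation does not produce the stated list, whereas the paper's route never needs it, because ${}^\omega\Pe\cong\overline{\Pe}\cong\Pe^\vee$ already delivers the symmetric eighteen-term sum directly. If you do carry out that Morita step, you in fact obtain something sharper than the theorem: since ${}^\omega\F\cong\F$ holds automatically here, $\F\cong\F^\vee$ forces $\{n_1,n_2,-(n_1+n_2)\}=\{0,n,-n\}$ up to order, which narrows the admissible cases. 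That is compatible with the paper, which (as it says explicitly after Theorem 9) works only with necessary conditions on the module structure of $\Pe$ and does not claim that every listed case occurs.
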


\begin{proof} We have $\h(\mathcal{O}_{X'},*_\B)=\sts$. $\B$, $\mathcal{O}_X'=\mathcal{O}_{X'}$, $*_\B$
 are suitable for the Tits process $\mathcal{J}(\B, \h (\B,*_\B),\Pe,N,*)$.
 The involution $*_\B$ is given by $\sigma_h$ and

\[ \h (\B,*_\B)\otimes \mathcal{O}_{X'} \cong
\left [\begin {array}{ccc}
\mathcal{O}_{X'} & \mathcal{O}_{X'}(-m) & \mathcal{O}_{X'}(m)\\
\mathcal{O}_{X'}(-m) &  \mathcal{O}_{X'} & \mathcal{O}_{X'}(2m)  \\
\mathcal{O}_{X'}(-m) & \mathcal{O}_{X'}(-2m) &  \mathcal{O}_{X'} \\
\end {array}\right ]
\]
implies the $\sts$-module structure of $\h (\B,*_\B)$. Since
$\mathcal{J}(\B, \h (\B,*_\B),\Pe,N,*)\otimes  \mathcal{O}_{X'} $ contains $\B^+$ it is a first Tits construction
$\mathcal{J}(\B,\Pe_0,N_0)$ where
$$\Pe_0\cong\h om_{\mathcal{O}_{X'}}(\G,\mathcal{O}_{X'}\oplus \mathcal{O}_{X'}(m)\oplus \mathcal{O}_{X'}(-m))$$
for a vector bundle $\G$ of rank 3 with ${\rm det}\,\G\cong \mathcal{O}_{X'}$; i.e.,
$$\G\cong \mathcal{O}_{X'}(n_1)\oplus \mathcal{O}_{X'}(n_2)\oplus \mathcal{O}_{X'}(-(n_1+n_2))$$
($n_i\in\mathbb{Z}$). By [Pu2, Proposition 3], we obtain the assertion.
\end{proof}

\begin{remark} If $\B$ is an Azumaya algebra over $X'$ of rank 9 then either $\B\cong B_0\otimes\mathcal{O}_{X'}$ for some
central simple division algebra over $k$ or
$$\B\cong\E nd _{X'} (\mathcal{O}_{X'}(m_1)\oplus\mathcal{O}_{X'}(m_2)\oplus\mathcal{O}_{X'}(m_3))$$
 with $m_i\in\mathbb{Z}$ arbitrary [Kn1, VII.(3.1.1)].
 The case that $\B$ is defined over $k$ and admits an involution $*_\B$ is covered in Theorem 5.
 The case that $\B$ is not defined over $k$ and $\mathcal{O}_{X'}(m_1)\oplus\mathcal{O}_{X'}(m_2)\oplus\mathcal{O}_{X'}
 (m_3)$ admits a regular hermitian form $*_\B$ is covered in Theorem 6. Suppose
 $$\B\cong\E nd _{X'} (\mathcal{O}_{X'}(m_1)\oplus\mathcal{O}_{X'}(m_2)\oplus\mathcal{O}_{X'}(m_3))$$
 admits an involution of the desired type which is not the adjoint involution of some $h$. Then
 \begin{align*}\h (\B,*_\B)\otimes \mathcal{O}_{X'}\cong\B^+ \cong
  \left [\begin {array}{ccc}
\mathcal{O}_{X'} & \mathcal{O}_{X'}(a) & \mathcal{O}_{X'}(b)\\
\mathcal{O}_{X'}(-a) &  \mathcal{O}_{X'}& \mathcal{O}_{X'}(b-a)  \\
\mathcal{O}_{X'}(-b) & \mathcal{O}_{X'}(a-b) &  \mathcal{O}_{X'} \\
\end {array}\right ]
\end{align*}
($a=m_1-m_2$, $b=m_1-m_3$) implies that
 \[\h (\B,*_\B)\cong \mathcal{O}_{X'}^3\oplus  \mathcal{O}_{X'}(a)\oplus  \mathcal{O}_{X'}(-a)
 \oplus  \mathcal{O}_{X'}(b)\oplus  \mathcal{O}_{X'}(-b)\oplus  \mathcal{O}_{X'}(a-b)\oplus  \mathcal{O}_{X'}(b-a).\]

\end{remark}

\subsection{} Now let $X$ be a nonrational curve over a field $k$.
 Let $k'$ be a separable quadratic field extension of $k$ with ${\rm Gal}(k'/k)=\langle\omega\rangle$
 which is  a splitting field of the quaternion division algebra associated to $X$. Let $X'=X\times_kk'$.
Then $$\B=\E nd_{X'} (\mathcal{O}_{X'}\oplus\mathcal{O}_{X'}(m)\oplus\mathcal{O}_{X'}(-m))$$
 is an Azumaya algebra of rank 9 over $X'$ permitting an involution $*_\B$ (adjoint to the hermitian form defined on
 $\mathcal{O}_{X'}\oplus\mathcal{O}_{X'}(m)\oplus\mathcal{O}_{X'}(-m)$) such that $\h(\mathcal{O}_{X'},*_\B)=\sts$
and $\B$, $\mathcal{O}_X'=\mathcal{O}_{X'}$ and $*_\B$ are suitable for the Tits process $\mathcal{J}(\B, \h (\B,*_\B),\Pe,N,*)$.

\begin{theorem}
 Let $m=2n$ be even. Then
$$ \h (\B,*_\B)\cong \mathcal{O}_{X}^3\oplus \lb(nP_0)^2\oplus \lb(-nP_0)^2\oplus \lb(2nP_0)\oplus \lb(-2nP_0).$$
For a Tits process $\mathcal{J}(\B, \h (\B,*_\B),\Pe,N,*)$, the $\sts$-module structure of $\Pe$ can be computed for
the following cases:\\
(1) If $n_1=2r_1$ and $n_2=2r_2$ are even then
$$\begin{array}{l}
\Pe\cong
 \lb(r_1P_0)\oplus  \lb(r_2P_0)\oplus  \lb(-(r_1+r_2)P_0)\\
\oplus  \lb((-n+r_1)P_0)\oplus  \lb((-n+r_2)P_0)\oplus  \lb((-n-(r_1+r_2))P_0)\\
\oplus  \lb((n+r_1)P_0)\oplus  \lb((n+r_2)P_0)\oplus  \lb((n-(r_1+r_2))P_0)\\
\oplus
 \lb(-r_1P_0)\oplus  \lb(-r_2P_0)\oplus  \lb((r_1+r_2)P_0)\\
\oplus  \lb((n-r_1)P_0)\oplus  \lb((n-r_2)P_0)\oplus  \lb((n+(r_1+r_2))P_0)\\
\oplus  \lb((-n-r_1)P_0)\oplus  \lb((-n-r_2)P_0)\oplus  \lb((n+(r_1-r_2))P_0).
\end{array}$$

\noindent (2) If  $n_1=2r_1+1$ odd and $n_2=2r_2$ even then
$$\begin{array}{l}
\Pe\cong
 tr_{k'/k}(\mathcal{O}_{X'}(1))\otimes  \lb(r_1P_0)\oplus  \lb(r_2P_0)\oplus   tr_{k'/k}(\mathcal{O}_{X'}(-1))\otimes
  \lb(-(r_1+r_2)P_0)\\
\oplus  tr_{k'/k}(\mathcal{O}_{X'}(1))\otimes  \lb((-n+r_1)P_0)\oplus  \lb((-n+r_2)P_0)\oplus \\
 tr_{k'/k}(\mathcal{O}_{X'}(-1))
\otimes  \lb((-n-(r_1+r_2))P_0)\\
\oplus   tr_{k'/k}(\mathcal{O}_{X'}(1))\otimes \lb((n+r_1)P_0)\oplus  \lb((n+r_2)P_0)\oplus \\
 tr_{k'/k}(\mathcal{O}_{X'}(-1))
\otimes  \lb((n-(r_1+r_2))P_0)\\
\oplus \lb(-r_2P_0)\oplus  \lb((r_2-n)P_0)\oplus  \lb((-n-r_2)P_0).
\end{array}$$
(3) If $n_1=2r_1+1$ and $n_2=2r_2+1$ both are odd then
$$\begin{array}{l}
\Pe\cong
 tr_{k'/k}(\mathcal{O}_{X'}(1))\otimes  \lb(r_1P_0)\oplus  tr_{k'/k}(\mathcal{O}_{X'}(1))\otimes \lb(r_2P_0)\oplus
  \lb(-(r_1+r_2+1)P_0)\\
\oplus  tr_{k'/k}(\mathcal{O}_{X'}(1))\otimes  \lb((-n+r_1)P_0)\oplus  tr_{k'/k}(\mathcal{O}_{X'}(1))\otimes
 \lb((-n+r_2)P_0)\oplus \\  \lb((-n-(r_1+r_2+1))P_0)
\oplus \\  tr_{k'/k}(\mathcal{O}_{X'}(1))\otimes \lb((n+r_1)P_0)\oplus
 tr_{k'/k}(\mathcal{O}_{X'}(1))\otimes
 \lb((n+r_2)P_0)\oplus \lb((n-(r_1+r_2+1))P_0)\\
\oplus \lb((r_1+r_2+1)P_0)\oplus \lb((n+r_1+r_2+1)P_0)\oplus \\ \lb((-nr_1+r_2+1)P_0).
\end{array}$$
\end{theorem}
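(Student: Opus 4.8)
The plan is to follow the proof of Theorem 6 verbatim up to the base change and then to carry out the descent from $X'$ to $X$ that is forced by the nonrationality of $X$. I would first determine the $\sts$-module structure of $\h(\B,*_\B)$. As in Theorem 6 the hermitian form on $\mathcal{O}_{X'}\oplus\mathcal{O}_{X'}(m)\oplus\mathcal{O}_{X'}(-m)$ splits as $\langle a\rangle\perp\mathbb{H}$, so $*_\B=\sigma_h$ gives
\[
\h(\B,*_\B)\otimes\mathcal{O}_{X'}\cong\mathcal{O}_{X'}^3\oplus\mathcal{O}_{X'}(m)^2\oplus\mathcal{O}_{X'}(-m)^2\oplus\mathcal{O}_{X'}(2m)\oplus\mathcal{O}_{X'}(-2m).
\]
Since $m=2n$, every twist occurring here is even, hence each summand is the pullback of a line bundle already defined over $X$ (recall $\mathcal{O}_{X'}(2t)\cong\lb(tP_0)\otimes\mathcal{O}_{X'}$). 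As $\h(\B,*_\B)$ is an $\sts$-module whose base change to $X'$ is the sum above, Krull--Schmidt forces it to descend termwise, which yields the asserted structure $\mathcal{O}_X^3\oplus\lb(nP_0)^2\oplus\lb(-nP_0)^2\oplus\lb(2nP_0)\oplus\lb(-2nP_0)$.

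Next, exactly as in Theorem 6, tensoring the Tits process with $k'$ splits $\sts'=\mathcal{O}_{X'}$ and realises $\mathcal{J}(\B,\h(\B,*_\B),\Pe,N,*)\otimes\mathcal{O}_{X'}$ as a first Tits construction $\mathcal{J}(\B,\Pe_0,N_0)$, with $\Pe_0\cong\h om_{\mathcal{O}_{X'}}(\G,\mathcal{O}_{X'}\oplus\mathcal{O}_{X'}(m)\oplus\mathcal{O}_{X'}(-m))$ and $\G\cong\mathcal{O}_{X'}(n_1)\oplus\mathcal{O}_{X'}(n_2)\oplus\mathcal{O}_{X'}(-(n_1+n_2))$, the condition $N_\B(\Pe)\cong\sts'$ forcing $\det\G\cong\mathcal{O}_{X'}$. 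By [Pu2, Proposition 3] this identifies the eighteen twists occurring in (the base change of) $\Pe$ as the integers $h-g$ together with $g-h$, where $h\in\{0,\pm 2n\}$ and $g\in\{n_1,n_2,-(n_1+n_2)\}$.

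It then remains to descend this list of twists along $X'\to X$ by means of Krull--Schmidt over $X$. The only indecomposable bundles available are the line bundles $\lb(tP_0)$, with $\lb(tP_0)\otimes\mathcal{O}_{X'}\cong\mathcal{O}_{X'}(2t)$, and the rank-two bundles $\mathcal{E}_0\otimes\lb(sP_0)=tr_{k'/k}(\mathcal{O}_{X'}(2s+1))$, with $(\mathcal{E}_0\otimes\lb(sP_0))\otimes\mathcal{O}_{X'}\cong\mathcal{O}_{X'}(2s+1)^2$. Because $m=2n$ is even, the parity of each twist $h-g$ is governed solely by the parities of $n_1$ and $n_2$; I would therefore split into the three cases (i) $n_1,n_2$ even, (ii) $n_1$ odd and $n_2$ even, (iii) $n_1,n_2$ odd. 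In case (i) all eighteen twists are even, so every summand descends to a line bundle and one reads off the first displayed structure; in cases (ii) and (iii) the odd twists must be grouped so as to reassemble into the factors $tr_{k'/k}(\mathcal{O}_{X'}(\pm 1))\otimes\lb(\,\cdot\,P_0)$, the even ones still descending to line bundles, giving the remaining two structures.

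The hard part is precisely this last grouping. One must follow the Galois-semilinear involution $*\colon\Pe\to\overline{\Pe^\vee}$ through the identification of $\Pe$ with $\Pe_0\oplus\Pe_0^\vee$ and show that it glues the odd-twist summands of $\Pe_0$ with the correct summands of $\Pe_0^\vee$, so that each such pair descends to a single $\mathcal{E}_0\otimes\lb(sP_0)$ rather than to a pair of line bundles. Keeping the hyperbolic pairing of $h$ aligned with the duality $\Pe_0\leftrightarrow\Pe_0^\vee$ while tracking the twists through the three parity cases is where the bookkeeping becomes delicate, and is the step I expect to consume most of the work.
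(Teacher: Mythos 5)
Your proposal follows the paper's own proof (which treats Theorems 7 and 8 together) essentially verbatim: base change to $X'$, identification of the base-changed Tits process with a first Tits construction $\mathcal{J}(\B,\Pe_0,N_0)$ so as to read off the eighteen $\mathcal{O}_{X'}$-summands of $\Pe\otimes\mathcal{O}_{X'}$, and then a parity-driven Krull--Schmidt descent to $X$ in which even twists give line bundles and odd twists reassemble into $tr_{k'/k}(\mathcal{O}_{X'}(\pm 1))\otimes\lb(\cdot\,P_0)$. The paper compresses the final step into ``a case-by-case study yields the assertion'' (citing [AEJ1] and the behaviour of $^\omega\mathcal{O}_{X'}(m)$ for odd $m$), so the bookkeeping you flag as the delicate part is exactly what the published proof also leaves to the reader.
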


\begin{theorem}  Let $m=2n+1$ be odd.  Then
$$\begin{array}{l}
 \h (\B,*_\B)\cong  \mathcal{O}_{X}^3\oplus \lb(nP_0)\otimes tr_{k'/k}(\mathcal{O}_{X'}(1))\oplus
\lb(-nP_0)\otimes tr_{k'/k}(\mathcal{O}_{X'}(-1))\oplus\\
 \lb((2n+1)P_0)\oplus \lb(-(2n+1)P_0)
 \end{array}$$
is the underlying $\sts$-module structure. For a Tits process $\mathcal{J}(\B, \h (\B,*_\B),\Pe,N,*)$, the $\sts$-module
structure of $\Pe$ can be computed for the following cases:\\
 (1) If $n_1=2r_1$ and $n_2=2r_2$ are even then
\begin{align*}
 \Pe\cong
 \lb(r_1P_0)\oplus  \lb(r_2P_0)\oplus  \lb(-(r_1+r_2)P_0) & \\
\oplus  \lb(-r_1P_0)\oplus  \lb(-r_2P_0)\oplus  \lb((r_1+r_2)P_0) &\\
\oplus  tr_{k'/k}(\mathcal{O}_{X'}(-1))\otimes \lb((-n+r_1)P_0)\oplus &\\ tr_{k'/k}(\mathcal{O}_{X'}(-1))\otimes \lb((-n+r_2)P_0)
\oplus  tr_{k'/k}(\mathcal{O}_{X'}(-1))\otimes \lb((-n-r_1-r_2)P_0) &\\
 \oplus  tr_{k'/k}(\mathcal{O}_{X'}(1))\otimes \lb((n+r_1)P_0) \oplus &\\  tr_{k'/k}(\mathcal{O}_{X'}(1))
 \otimes \lb((n+r_2)P_0)
  \oplus  tr_{k'/k}(\mathcal{O}_{X'}(1))\otimes \lb((n-r_1-r_2)P_0)
\end{align*}
\noindent (2) If w.l.o.g. $n_1=2r_1+1$ is odd and $n_2=2r_2$ even then
$$\begin{array}{l}
\Pe\cong
 tr_{k'/k}(\mathcal{O}_{X'}(1))\otimes \lb(r_1P_0)\oplus  \lb(r_2P_0)\oplus  tr_{k'/k}(\mathcal{O}_{X'}(-1))\otimes
 \lb((r_2-r_1)P_0)\\
\oplus  \lb((-n+r_1)P_0)\oplus  tr_{k'/k}(\mathcal{O}_{X'}(-1))\otimes \lb((-n+r_2)P_0)\oplus  \lb((-(n+r_1+r_2+1))P_0)\\
\oplus  \lb((n+r_1+1)P_0)\oplus  tr_{k'/k}(\mathcal{O}_{X'}(1))\otimes \lb((n+r_2)P_0)\oplus \\
 tr_{k'/k}(\mathcal{O}_{X'}(-1)) \otimes \lb((n-(r_1+r_2))P_0).
\end{array}$$
\noindent (3) If  $n_1=2r_1+1$ and $n_2=2r_2+1$ are all odd then
$$\begin{array}{l}
\Pe\cong
 tr_{k'/k}(\mathcal{O}_{X'}(1))\otimes \lb(r_1P_0) \oplus   tr_{k'/k}(\mathcal{O}_{X'}(1))\otimes \lb(r_2P_0)\oplus
  \lb(-(r_1+r_2+1)P_0)\\
\oplus  \lb((-n+r_1)P_0)\oplus  \lb((-n+r_2)P_0)\oplus  tr_{k'/k}(\mathcal{O}_{X'}(-1))\otimes \lb((-n-(r_1+r_2+1))P_0)\\
\oplus  \lb((n+r_1+1)P_0)\oplus  \lb((n+r_2+1)P_0)\oplus  tr_{k'/k}(\mathcal{O}_{X'}(1))\otimes  \lb((n-(r_1+r_2+1))P_0)\\
\oplus
  \lb(-r_2P_0)\oplus  \lb((n-r_1)P_0)\oplus  \lb((n-r_2)P_0)\oplus
  \lb((-n-r_1-1)P_0)\oplus  \lb((-n-r_2-1)P_0).
\end{array}$$
\end{theorem}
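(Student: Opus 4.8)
The plan is to follow the proof of Theorem 6 (the even case) almost verbatim, the only genuinely new feature being that $m=2n+1$ is odd, so that after descent from $X'$ to $X$ the odd-degree line bundles on $X'$ can no longer descend individually but only in pairs, as trace bundles. First I would pin down $\h(\B,*_\B)$. Base changing to $X'\cong\mathbb{P}^1_{k'}$ splits $\B$ as $\E nd_{X'}(\mathcal{O}_{X'}\oplus\mathcal{O}_{X'}(m)\oplus\mathcal{O}_{X'}(-m))$, so that $\h(\B,*_\B)\otimes\mathcal{O}_{X'}\cong\B^+$ has as its $\sts$-summands the diagonal $\mathcal{O}_{X'}^3$ together with the off-diagonal $\mathcal{O}_{X'}(m)^2$, $\mathcal{O}_{X'}(-m)^2$, $\mathcal{O}_{X'}(2m)$, $\mathcal{O}_{X'}(-2m)$. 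With $m=2n+1$ the even-degree summands $\mathcal{O}_{X'}(\pm 2m)$ descend to $\lb(\pm(2n+1)P_0)$, the odd-degree pairs $\mathcal{O}_{X'}(\pm m)^2$ descend to $\lb(nP_0)\otimes tr_{k'/k}(\mathcal{O}_{X'}(1))$ and $\lb(-nP_0)\otimes tr_{k'/k}(\mathcal{O}_{X'}(-1))$, and the diagonal descends to $\sts^3$; this is the asserted structure of $\h(\B,*_\B)$.

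For the $\Pe$-part I would argue as in the proof of Theorem 5. Since $\mathcal{J}(\B,\h(\B,*_\B),\Pe,N,*)\otimes\mathcal{O}_{X'}$ contains $\B^+$, it is a first Tits construction $\mathcal{J}(\B,\Pe_0,N_0)$ over $X'$; comparing ranks, and using $\h(\B,*_\B)\otimes\mathcal{O}_{X'}\cong\B^+$, gives $\Pe\otimes\mathcal{O}_{X'}\cong\Pe_0\oplus\Pe_0^\vee$. As $X'\cong\mathbb{P}^1_{k'}$, the bundle $\Pe_0\cong\h om_{\mathcal{O}_{X'}}(\G,\mathcal{O}_{X'}\oplus\mathcal{O}_{X'}(m)\oplus\mathcal{O}_{X'}(-m))$ with $\G\cong\mathcal{O}_{X'}(n_1)\oplus\mathcal{O}_{X'}(n_2)\oplus\mathcal{O}_{X'}(-(n_1+n_2))$, the third exponent being forced by $\det\G\cong\mathcal{O}_{X'}$, and [Pu2, Proposition 3] then writes $\Pe\otimes\mathcal{O}_{X'}$ as an explicit orthogonal sum of the eighteen line bundles $\mathcal{O}_{X'}(\pm(w_j-n_i))$ with $w_j\in\{0,m,-m\}$.

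It remains to descend this $X'$-bundle along $X'/X$. Here I would use the Theorem of Krull--Schmidt together with the classification of indecomposable selfdual $\sts$-bundles recalled in Section 3: $\mathcal{O}_{X'}(2t)$ is the pullback of $\lb(tP_0)$ and descends alone, whereas $\mathcal{O}_{X'}(2t+1)$ is Galois stable but not defined over $X$, so a pair of copies descends to $\lb(tP_0)\otimes\E_0=tr_{k'/k}(\mathcal{O}_{X'}(2t+1))$. Since $m=2n+1$ is odd, the parity of each exponent $w_j-n_i$ is governed by the parities of $n_1$ and $n_2$; splitting into the three cases ($n_1,n_2$ both even, one odd, or both odd) and collecting the even exponents into line bundles and the odd ones into trace bundles produces the three asserted module structures.

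The main obstacle is precisely this last descent step. In contrast with the even case, where all exponents are even and every summand descends by itself, here the odd-degree summands must be reassembled into trace bundles, and one has to check that in each parity case they occur with the multiplicities that make this pairing compatible with the $\sts$-structure of $\Pe$ (equivalently, that the $\G$ giving rise to $\Pe$ is indeed defined over $X$). Keeping track of the resulting $\pm n$ and $\pm(n\pm r_i)$ shifts across all three cases is delicate and error-prone, but once the pairing is fixed the verification is routine.
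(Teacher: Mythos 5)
Your proposal follows the same route as the paper, which proves Theorems 7 and 8 together: base change to $X'\cong\mathbb{P}^1_{k'}$, identify $\mathcal{J}(\B,\h(\B,*_\B),\Pe,N,*)\otimes\mathcal{O}_{X'}$ as a first Tits construction $\mathcal{J}(\B,\Pe_0,N_0)$ with $\Pe_0\cong\h om_{\mathcal{O}_{X'}}(\G,\mathcal{O}_{X'}\oplus\mathcal{O}_{X'}(m)\oplus\mathcal{O}_{X'}(-m))$ and $\G$ a sum of line bundles of trivial determinant, list the eighteen summands of $\Pe\otimes\mathcal{O}_{X'}$, and then descend by Krull--Schmidt and [AEJ1] with a case-by-case parity analysis in $n_1,n_2$ that reassembles the odd-degree summands into trace bundles. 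This matches the paper's argument, including the identification of the descent step as the only genuinely new ingredient compared to the even case.
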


We prove Theorems 7 and 8 together:

\begin{proof}   Since
$\mathcal{J}(\B, \h (\B,*_\B),\Pe,N,*)\otimes  \mathcal{O}_{X'} $ contains $\B^+$, it is a first Tits construction
$\mathcal{J}(\B,\Pe_0,N_0)$, where
$$\Pe_0\cong\h om_{\mathcal{O}_{X'}}(\G,\mathcal{O}_{X'}\oplus \mathcal{O}_{X'}(m)\oplus \mathcal{O}_{X'}(-m))$$
for a vector bundle $\G$ of rank 3 with ${\rm det}\,\G\cong \mathcal{O}_{X'}$; i.e.,
$$\G\cong \mathcal{O}_{X'}(n_1)\oplus \mathcal{O}_{X'}(n_2)\oplus \mathcal{O}_{X'}(-(n_1+n_2))$$
($n_i\in\mathbb{Z}$).
The proof of Theorem 6 gives the $\mathcal{O}_{X'}$-structure of $\B$ and $\Pe\otimes\mathcal{O}_{X'}$
which imply the assertion using [AEJ1] since we have
$$\begin{array}{l}
\Pe\otimes \mathcal{O}_{X'}\cong\\
 \mathcal{O}_{X'}(n_1)\oplus  \mathcal{O}_{X'}(n_2)\oplus  \mathcal{O}_{X'}(-(n_1+n_2))\\
\oplus  \mathcal{O}_{X'}(-m+n_1)\oplus  \mathcal{O}_{X'}(-m+n_2)\oplus  \mathcal{O}_{X'}(-m-(n_1+n_2))\\
\oplus  \mathcal{O}_{X'}(m+n_1)\oplus  \mathcal{O}_{X'}(m+n_2)\oplus  \mathcal{O}_{X'}(m-(n_1+n_2))\\
\oplus
 \mathcal{O}_{X'}(-n_1)\oplus  \mathcal{O}_{X'}(-n_2)\oplus  \mathcal{O}_{X'}(n_1+n_2)\\
\oplus  \mathcal{O}_{X'}(m-n_1)\oplus  \mathcal{O}_{X'}(m-n_2)\oplus  \mathcal{O}_{X'}(m+n_1+n_2)\\
\oplus  \mathcal{O}_{X'}(-m-n_1)\oplus  \mathcal{O}_{X'}(-m-n_2)\oplus  \mathcal{O}_{X'}(-m+n_1+n_2).\\
\end{array}$$
By [Pu2, Proposition 3], $^\omega \mathcal{O}_{X'}(m)\cong \mathcal{O}_{X'}(-m)$ if $m$ is odd.
  A case-by-case study yields the assertion.
\end{proof}

It is not clear if all these cases occur.

\begin{theorem} Assume that, in the situation of  Theorem 7,
 $k'$ is not  a splitting field of the quaternion division algebra associated to $X$. Then
$$\B=\E nd_{X'} (\mathcal{O}_{X'}\oplus\lb(mP_0)\oplus\lb(-mP_0))$$
 is an Azumaya algebra of rank 9 over $X'$ permitting an involution $*_\B$ adjoint to the hermitian form defined on
 $\mathcal{O}_{X'}\oplus\lb(mP_0)\oplus\lb(-mP_0)$.  The Tits process $\mathcal{J}(\B, \h (\B,*_\B),\Pe,N,*)$
 has the following underlying $\sts$-module structure:
$$ \h (\B,*_\B)\cong \mathcal{O}_{X}^3\oplus \lb(mP_0)^2\oplus \lb(-mP_0)^2\oplus \lb(2mP_0)\oplus \lb(-2mP_0)$$
as $\sts$-module and either
$$\begin{array}{l}
\Pe\cong \lb(n_1P_0)\oplus\lb(n_2P_0)\oplus\lb(-(n_1+n_2)P_0)\\
\oplus\lb((-m+n_1)P_0)\oplus\lb((-m+n_2)P_0)\oplus\lb(-m-(n_1+n_2)P_0)\\
\oplus\lb((m+n_1)P_0)\oplus\lb((m+n_2)P_0)\oplus\lb((m-(n_1+n_2))P_0)\\
\oplus
\lb(-n_1P_0)\oplus\lb(-n_2P_0)\oplus\lb((n_1+n_2)P_0)\\
\oplus\lb((m-n_1)P_0)\oplus\lb((m-n_2)P_0)\oplus\lb((m+n_1+n_2)P_0)\\
\oplus\lb((-m-n_1)P_0)\oplus\lb((-m-n_2)P_0)\oplus\lb((-m+n_1+n_2)P_0).
\end{array}$$
for arbitrarily chosen $n_i\in\mathbb{Z}$, or
$$\begin{array}{l}
\Pe\cong \lb((2n+1)P_0)\oplus\lb(-nP_0)\otimes tr_{l/k}(\mathcal{O}(-1))\oplus\\
\lb((2n+m+1)P_0)\oplus \lb((m-n)P_0)\otimes tr_{l/k}(\mathcal{O}(-1))\oplus\\
\lb((2n-m+1)P_0)\oplus \lb((-m-n)P_0)\otimes tr_{l/k}(\mathcal{O}(-1)) \oplus\\
 \lb(-(2n+1)P_0)\oplus\lb(nP_0)\otimes tr_{l'/k'}(\mathcal{O}(1))\oplus\\
\lb(-(2n+m+1)P_0)\oplus \lb(-(m-n)P_0)\otimes tr_{l/k}(\mathcal{O}(1))\oplus\\
\lb(-(2n-m+1)P_0)\oplus \lb(-(-m-n)P_0)\otimes tr_{l/k}(\mathcal{O}(1))
\end{array}$$
for arbitrarily chosen $n\in\mathbb{Z}$, where $l$ is a separable quadratic splitting field of $D_0$.
\end{theorem}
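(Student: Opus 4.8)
The plan is to run the proof of Theorems 7 and 8 closely, up to one decisive difference: since $k'$ does not split $D_0$, the curve $X'=X\times_kk'$ is itself nonrational of genus zero, so a rank $3$ bundle over $X'$ need no longer decompose into line bundles. I first read off $\h(\B,*_\B)$. The bundle $V=\mathcal{O}_{X'}\oplus\lb(mP_0)\oplus\lb(-mP_0)$ is pulled back from $X$, and the hermitian form $h$ is $\langle a\rangle$ on $\mathcal{O}_{X'}$ orthogonal to the hyperbolic plane on $\lb(mP_0)\oplus\lb(-mP_0)$; hence $\B=\E nd_{X'}(V)$ with $*_\B=\sigma_h$ is defined over $X$ and $\h(\mathcal{O}_{X'},*_\B)=\sts$. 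Writing out the self-adjoint endomorphisms from the matrix form of $\B^+$ exactly as in the proof of Theorem 6, the three diagonal blocks contribute $\mathcal{O}_X^3$ and the off-diagonal $\h om$-bundles pair up under $*_\B$ into $\lb(mP_0)^2\oplus\lb(-mP_0)^2\oplus\lb(2mP_0)\oplus\lb(-2mP_0)$, which is the asserted $\sts$-structure.

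Next I would base change to $X'$. As in Theorems 7 and 8, $\mathcal{J}(\B,\h(\B,*_\B),\Pe,N,*)\otimes\mathcal{O}_{X'}$ contains $\B^+$, hence by [Pu2, Proposition 3] it is a first Tits construction $\mathcal{J}(\B,\Pe_0,N_0)$ with $\Pe_0\cong\h om_{\mathcal{O}_{X'}}(\G,V)$ for a rank $3$ bundle $\G$ over $X'$ satisfying $\det\G\cong\mathcal{O}_{X'}$. Here the new feature enters: over the nonrational $X'$ there are indecomposable bundles of rank $2$ but none of higher rank, so by [AEJ1] the bundle $\G$ is Krull--Schmidt-equivalent either to a sum of three line bundles $\lb(n_1P_0)\oplus\lb(n_2P_0)\oplus\lb(-(n_1+n_2)P_0)$, or to $\lb(sP_0)\oplus[\lb(tP_0)\otimes\E_0']$, where $\E_0'=\E_0\otimes\mathcal{O}_{X'}$ is the pulled-back rank $2$ indecomposable (so that $\E nd_{X'}(\E_0')\cong(D_0\otimes_kk')\otimes\mathcal{O}_{X'}$ is a division bundle, whence $\E_0'$ is indeed indecomposable), the condition $\det\G\cong\mathcal{O}_{X'}$ leaving in the second case a single free integer $n$. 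These two possibilities produce exactly the two cases of the assertion, the first reproducing, formally, the module structure of Theorem 6.

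I would then compute and descend $\Pe_0=\h om_{\mathcal{O}_{X'}}(\G,V)$. In the first case all summands are line bundles pulled back from $X$, and descent along $\omega$ gives the first module structure just as in Theorem 7. In the second case the summands $\h om_{\mathcal{O}_{X'}}(\lb(tP_0)\otimes\E_0',V)$ are twists of $\E_0'$ and $\check{\E_0'}$; using that $\E_0'$ is already defined over $X$, together with the projection formula and the identifications $\E_0=tr_{l/k}(\mathcal{O}(1))$, $\check{\E_0}=tr_{l/k}(\mathcal{O}(-1))$ (with $l$ a separable quadratic splitting field of $D_0$), each such summand rewrites over $X$ as a line-bundle twist $\lb(\cdot P_0)\otimes tr_{l/k}(\mathcal{O}(\pm1))$. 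Arranging the conjugate-dual pairs forced by $*$ then matches the $18$ summands against the second claimed structure.

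The main obstacle is precisely this last bookkeeping in the second case: one must identify correctly which summands of $\Pe_0$ are the rank $2$ indecomposables over the nonrational $X'$, rewrite them over $X$ via the projection formula and duality as the trace bundles $\lb(\cdot P_0)\otimes tr_{l/k}(\mathcal{O}(\pm1))$, and distribute the $18$ summands into the $*$-conjugate-dual pairs demanded by the Tits process. Pinning down the Galois action ${}^{\omega}$ on the line bundles (the even-degree ones are pulled back from $X$) and on the rank $2$ indecomposables of $X'$ via [AEJ1] is what turns this into a careful, rather than automatic, case-by-case computation.
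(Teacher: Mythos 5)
Your proposal is correct and follows essentially the same route as the paper: base change to $X'$, recognize the Tits process as a first Tits construction $\mathcal{J}(\B,\Pe_0,N_0)$ with $\Pe_0\cong\h om_{\mathcal{O}_{X'}}(\G,V)$, split into the two Krull--Schmidt cases for the rank $3$ bundle $\G$ over the still-nonrational $X'$ (three line bundles, or a line bundle plus a twist of the rank $2$ indecomposable, which the paper writes as $tr_{l'/k'}(\mathcal{O}_{X_{l'}}(1))$ and you identify with $\E_0\otimes\mathcal{O}_{X'}$ --- the same bundle), and then descend $\Pe_0\oplus\Pe_0^\vee$ to $X$ via [AEJ1] and the fact that every line bundle on $X'$ is already defined over $X$. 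The only differences are cosmetic (your explicit identification of $\E_0'$ with the pullback of $\E_0$ if anything makes the final descent step more transparent than the paper's citation).
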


\begin{proof}  We have $\h(\mathcal{O}_{X'},*_\B)=\sts$
and $\B$, $\mathcal{O}_X'=\mathcal{O}_{X'}$ and $*_\B$
 are suitable for the Tits process $\mathcal{J}(\B, \h (\B,*_\B),\Pe,N,*)$. Since
$\mathcal{J}(\B, \h (\B,*_\B),\Pe,N,*)\otimes  \mathcal{O}_{X'} $ contains $\B^+$ it is a first Tits construction
$\mathcal{J}(\B,\Pe_0,N_0)$ where
$$\Pe_0\cong\h om_{\mathcal{O}_{X'}}(\G,\mathcal{O}_{X'}\oplus \lb(m)\oplus \lb(-m))$$
for a vector bundle $\G$ of rank 3 with ${\rm det}\,\G\cong \mathcal{O}_{X'}$; i.e.,
$$\G\cong \lb(n_1P_0)\oplus \lb(n_2P_0)\oplus \lb(-(n_1+n_2)P_0)$$
or
$$\G\cong \lb(-(2n+1)P_0)\oplus \lb(nP_0)\otimes tr_{l'/k'}(\mathcal{O}_{X_{l'}}(1))$$
for a quadratic field extension $l'/k'$ which splits the quaternion division algebra associated with $X'$
($n_i\in\mathbb{Z}$).
In the first case, the $\mathcal{O}_{X'}$-structure of $\B$ and $\Pe\otimes\mathcal{O}_{X'}$ can be computed as in the proof
of Theorem 8 and in the second, we get
$$\begin{array}{l}
\Pe_0\cong \lb((2n+1)P_0)\oplus\lb(-nP_0)\otimes tr_{l'/k'}(\mathcal{O}(-1))\oplus\\
\lb((2n+m+1)P_0)\oplus \lb((m-n)P_0)\otimes tr_{l'/k'}(\mathcal{O}(-1))\oplus\\
\lb((2n-m+1)P_0)\oplus \lb((-m-n)P_0)\otimes tr_{l'/k'}(\mathcal{O}(-1))
\end{array}$$
and $\Pe\otimes\mathcal{O}_{X'}\cong\Pe_0\oplus\Pe_0^\vee$.
These imply the assertion using [AEJ1] and the fact that every line bundle over $X'$ is already defined over $X$ here.
\end{proof}

The cases treated in the above theorem are the only ones where the hermitian form $h:\E\to^\omega\E^\vee$
on $\E$ is not defined over $k$ and  $\B=\E nd_{\mathcal{O}_{X'}}(\E)$, if $k$
has characteristic 0.
It is not clear, however, whether all cases appear, since we only worked with necessary conditions when restricting the
module structure of $\Pe$.

\section{Elliptic curves}

The advantage of working over elliptic curves instead of curves of genus zero is that there are
also bundles of degree higher than 2 which are indecomposable which contribute to
 more interesting examples of Jordan algebras. The vector bundles are well-understood, we will use
the results and terminology from Atiyah [At] and Arason, Elman and Jacob [AEJ1,2,3].

Let $k$ be a field of characterisitc not 2 or 3. An elliptic curve $X/k$ can be described by a
Weierstra{\ss} equation of the form
$$y^2 = x^3 + b_2 x^2 + b_1 x + b_0 \qquad (b_i \in k)$$
with the infinite point as base point $O$. Let $q(x) = x^3 + b_2 x^2+ b_1 x + b_0$ be the defining polynomial in $k[x]$. The $k$-rational
points of order 2 on $X$ are the points $(a, 0)$, where $a \in k$ is a root of $q(x)$.
 Let $K = k(X) = k(x, \sqrt{q(x)})$ be the function field of $X$. We
distinguish three different cases (cf. [AEJ3]).

\smallskip
{\it Case I.} $X$ has three $k$-rational points of order 2 which is
equivalent to $_2 \Pic (X) \cong {\Bbb Z}_2 \times {\Bbb Z}_2$. Write $q(x) =
(x-a_1) (x-a_2) (x-a_3)$ and $_2 \Pic (X) = \{ \mathcal{O}_X, \mathcal{L}_1, \mathcal{L}_2, \mathcal{L}_3 \}$
where $\mathcal{L}_i$ corresponds with the point $(a_i, 0)$ for $i = 1,2,3$.

\smallskip
{\it Case II.} $X$ has one $k$-rational point of order 2 which is equivalent to
$_2 \Pic (X) \cong {\Bbb Z}_2$. Write $q(x) = (x-a_1) q_1 (x)$ and $_2 \Pic (X) =
\{ \mathcal{O}_X, \mathcal{L}_1 \}$ with $\mathcal{L}_1$ corresponding with $(a_1, 0)$.
Define $\ell_2  = k (a_2)$ with $a_2$ a root of $q_1, K_2  = K \otimes_k \ell_2$.

\smallskip\
{\it Case III.} $X$ has no $k$-rational point of order 2 which is equivalent
to $_2 \Pic (X) = \{ \mathcal{O}_X \}$. Define $\ell_1  = k (a_1)$ with
$a_1$ a root of the irreducible polynomial $q(x)$, $K_1 = K \otimes_k
\ell_1$ and let $\Delta (q) = (a_1 -a_2)^2 (a_1 - a_3)^2 (a_2 - a_3)^2$ be the discriminant of $q$.

\smallskip
Correspondingly, $X/k$ is called {\it of type I, II} or {\it III}.
We denote the absolutely indecomposable vector bundle of rank $r$ and degree 0 with nontrivial global
sections (which is uniquely determined up to isomorphism [At, Theorem 5]) by $\F_r$. Let $\overline{k}$ be an algebraic
closure of $k$ and let $\overline{X}=X\times_k \overline{k}$.

For simplicity, we assume from now on that $k$ has characteristic zero.
 Then let $\N_i$ denote a line bundle of order 3 on $X$.
There is a nondegenerate cubic form on $\N_i$, which is uniquely determined up to an invertible factor in $k$
[Pu2, Lemma 1]. Now
$_3 \Pic (\overline{X}) = \{\N_i\,|\, 0\leq 1\leq 8\}$ where $\N_0= \mathcal{O}_{\overline{X}} $ [At, Lemma 22].
Hence $_3 \Pic (X) = \{\N_i\,|\, 0\leq 1\leq m\}$ for some even integer $m$, $ 0\leq m\leq 8$, where $\N_0= \mathcal{O}_X $.

Let $l/k$ be a finite field extension. For a vector bundle $\mathcal{N}$ on
$Y  = X \times_k \ell$, the direct image $f_* \mathcal{N}$ of $\mathcal{N}$ under the
projection morphism $f \colon Y \to X$ is a vector bundle on $X$ denoted by
$tr_{l/k} (\mathcal{N})$.

\begin{lemma} Let $l/k$ be a quadratic field extension with ${\rm Gal}(l/k)=\langle \omega\rangle$.
 Let $X_l=X\times_k l$.\\
 (i) If $X$ has type I or III, or type II and $l\not\cong l_2$, then
  every  Tits process over $X$ starting with $\B=\mathcal{O}_{X_l}$ and $*_\B=\omega$ is defined over $k$.\\
  (ii) Let $X$ be of type II and $l\cong l_2$. If there is a line bundle $\N_i$ over $X_l$ of order 3
  which is not defined over $X$ and satisfies $^\omega\N_i\cong\N_i^\vee$, then there is a Tits process
$\J=\mathcal{J}(\mathcal{O}_{X_l},\sts,\N_i,N,*)\cong\sts\oplus\N_i$ which is not defined over $X$. Otherwise every Tits process
 starting with $\B=\mathcal{O}_{X_l}$ and $*_\B=\omega$ is defined over $X$.
\end{lemma}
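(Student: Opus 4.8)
The plan is to analyze Tits processes $\mathcal{J}(\mathcal{O}_{X_l},\sts,\Pe,N,*)$ where $\Pe$ ranges over line bundles in $\mathrm{Pic}\,X_l$ that are $\sts$-admissible, reducing the question in each case to whether such an admissible $\Pe$ can fail to be defined over $X$. By the $\A$-admissibility condition spelled out before Theorem 1, an involution $*:\Pe\to\overline{\Pe^\vee}$ requires $^\omega\Pe\cong\Pe^\vee$, and the cubic norm condition $N_\B(\Pe)\cong\sts'=\mathcal{O}_{X_l}$ forces $\Pe$ to have degree zero. Combined with the general fact (used in Lemma 1(ii) and again in Lemma 4) that $\mathcal{J}(\mathcal{O}_{X_l},\sts,\Pe,N,*)$ is defined over $X$ precisely when $\Pe$ is defined over $X$, the whole statement becomes a question about which degree-zero line bundles on $X_l$ satisfy $^\omega\Pe\cong\Pe^\vee$ without descending to $X$.

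For part (i), I would argue that in types I and III, and in type II with $l\not\cong l_2$, the extension $X_l\to X$ does not introduce any new line bundles beyond those already defined over $X$: the relevant piece of $\mathrm{Pic}\,X_l$ is fixed by $\omega$, so $^\omega\Pe\cong\Pe$. Then $^\omega\Pe\cong\Pe^\vee$ collapses to $\Pe\cong\Pe^\vee$, which on an elliptic curve forces $\Pe$ to be $2$-torsion; but a nontrivial admissible $\Pe$ here would also need to carry a cubic norm, and $2$-torsion line bundles admit no nonzero cubic form unless $\Pe\cong\mathcal{O}_{X_l}$. Hence $\Pe\cong\mathcal{O}_{X_l}$, it is defined over $X$, and so is the resulting Tits process.

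For part (ii), with $X$ of type II and $l\cong l_2$, the extension $X_l\to X$ genuinely enlarges the Picard group, so $^\omega$ can act nontrivially and the equation $^\omega\Pe\cong\Pe^\vee$ can have solutions not defined over $X$. The plan is: if there exists an order-$3$ line bundle $\N_i$ on $X_l$, not defined over $X$, with $^\omega\N_i\cong\N_i^\vee$, then $\N_i$ has degree zero, is $3$-torsion, hence carries a nondegenerate cubic norm (unique up to a scalar, as recorded in the elliptic-curve preliminaries), and is therefore $\sts$-admissible; the corresponding Tits process $\mathcal{J}(\mathcal{O}_{X_l},\sts,\N_i,N,*)\cong\sts\oplus\N_i$ is then \emph{not} defined over $X$ because $\N_i$ is not. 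Conversely, if no such $\N_i$ exists, I would show every admissible $\Pe$ must be defined over $X$: admissibility forces $\Pe$ to have degree zero with a cubic norm, hence $\Pe$ is $3$-torsion, and the involution condition $^\omega\Pe\cong\Pe^\vee$ restricts it to either descend to $X$ or to be exactly the excluded type, so only the descending ones survive.

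The main obstacle is the bookkeeping of the $\omega$-action on $_3\mathrm{Pic}(X_l)$ in the type II, $l\cong l_2$ case: one must verify carefully which $3$-torsion bundles become available over $X_l$ but not over $X$, and among those which satisfy $^\omega\N_i\cong\N_i^\vee$, since $^\omega\N_i\cong\N_i^\vee$ is equivalent to $N_{X_l/X}(\N_i)$ being trivial and this must be checked against the explicit structure of $_3\mathrm{Pic}$ recorded via $_3\mathrm{Pic}(\overline{X})=\{\N_i\}$. I would lean on [At, Lemma 22] and [AEJ3] for the Galois action on torsion bundles, and on the uniqueness of the cubic norm on order-$3$ bundles to guarantee that the norm datum $N$ exists exactly when the torsion and descent conditions are met.
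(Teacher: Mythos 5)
Your proposal follows essentially the same route as the paper: admissibility forces $\Pe$ to be $3$-torsion (via the norm condition $N_\B(\Pe)\cong\mathcal{O}_{X_l}$) and to satisfy $^\omega\Pe\cong\Pe^\vee$ (via the involution), and the type of $X_l$ then decides whether $\Pe$ descends to $X$, in which case self-duality plus $3$-torsion forces $\Pe\cong\mathcal{O}_{X_l}$. The only points to tidy are the final appeal to [Pu2, Lemma 1] to pass from ``globally free'' to ``defined over $k$'' in part (i), and the observation that the conditional phrasing of part (ii) spares you the bookkeeping of the $\omega$-action on $_3{\rm Pic}(X_l)$ that you flag as the main obstacle.
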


\begin{proof} (i) If $X$ is of type III, then so is $X_l$ and thus each $\Pe\in {\rm Pic}\,\mathcal{O}_{X_l}$ is defined
over $X$ which implies $^\omega\Pe\cong\Pe$.
 If $\Pe\in{\rm Pic}\,\mathcal{O}_{X_l}$ is $\sts$-admissible, then $^\omega\Pe\cong\Pe^\vee$  and hence $\Pe\cong\Pe^\vee$. Therefore $\Pe\cong\mathcal{O}_{X_l}$ and so the Tits process
 $\J=\mathcal{J}(\mathcal{O}_{X_l},\sts,\Pe,N,*)$ is globally free as $\sts$-module, implying that it is defined over $k$
  [Pu2, Lemma 1].

 If $X$ is of type I then so is $X_l$ and thus each $\Pe\in {\rm Pic}\,\mathcal{O}_{X_l}$ is defined over $X$.
If $X$ is of type II and $l\not\cong l_2$, then so is $X_l$ and thus each $\Pe\in {\rm Pic}\,\mathcal{O}_{X_l}$ is defined over $X$.
Again this implies that in both cases, the Tits process
 $\J=\mathcal{J}(\mathcal{O}_{X_l},\sts,\Pe,N,*)$ is defined over $k$.
\\ (ii) Since $l\cong l_2$, $X_l$ is of type I. Now
  $N_{\mathcal{O}_{X_l}}(\lb\otimes \mathcal{O}_{X_l})\cong \lb^3\cong \mathcal{O}_{X_l}$ for a
  line bundle $\lb$ over $X_l$,
 if and only if $\lb\cong\N_i$ for some $i$. Let $\Pe\in {\rm Pic}\,\mathcal{O}_{X_l}$ be
 $\sts$-admissible, then $\Pe\cong \N_i$ and $^\omega \N_i\cong\N_i^\vee$.
If $\N_i$ is already defined over $X$, this implies $\N_i\cong \mathcal{O}_{X_l}$. (If $\N_i$ is nontrivial and defined over $X$,
this would yield the contradiction that $ \N_i\cong\N_i^\vee$.)
\end{proof}

\begin{lemma}  Every first Tits construction starting
with $\mathcal{O}_X$ is isomorphic to $\mathcal{J}(\sts,\N_i, N_i)$ with $N_i:\N_i\to \sts$ a nondegenerate
cubic form on $\N_i$.
\end{lemma}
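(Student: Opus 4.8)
The plan is to unwind the definition of the general first Tits construction in the special case $\A=\sts$ and to pin down the only data that can occur. By definition, a first Tits construction starting with $\sts$ has the form $\mathcal{J}(\sts,\Pe,N)$, where $\Pe\in\Pic_l\,\sts=\Pic X$ is a line bundle with $N_\sts(\Pe)\cong\sts$ and $N$ is a cubic norm on $\Pe$. Since $\sts$ is commutative of rank one, its cubic norm is the cube map, so the morphism it induces on Picard groups sends $\Pe$ to $\Pe^{\otimes 3}$; thus the admissibility condition $N_\sts(\Pe)\cong\sts$ is exactly $\Pe^{\otimes 3}\cong\sts$. (This is the direct analogue of the computation $\sts\cong N(\Pe)\cong\lb(3mP_0)$ used in Lemma~3, now read over an elliptic curve.)

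First I would observe that $\Pe^{\otimes 3}\cong\sts$ forces $\Pe$ to be a line bundle of order dividing $3$, i.e. $\Pe\in{}_3\Pic X$. By the description of ${}_3\Pic X$ recorded above, $\Pe\cong\N_i$ for some $i$ with $0\le i\le m$. Next I would produce the norm: a multiplicative cubic map $N\colon\N_i\to\sts$ is the same datum as a cubic form on the line bundle $\N_i$, hence a morphism $\N_i^{\otimes 3}\to\sts$; because $\N_i^{\otimes 3}\cong\sts$ and $H^0(X,\sts)=k$, such forms constitute a one-dimensional $k$-space, and the nondegenerate ones are precisely the nonzero scalar multiples of a fixed generator. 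This is the content of [Pu2, Lemma~1], which furnishes a nondegenerate cubic form $N_i$ on $\N_i$, unique up to an invertible factor in $k$. Taking $N=N_i$ (after fixing an isomorphism $\Pe\cong\N_i$) identifies $\mathcal{J}(\sts,\Pe,N)$ with $\mathcal{J}(\sts,\N_i,N_i)$, which is the assertion.

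I expect the proof to be short, and the only substantive point is the translation of the norm condition $N_\sts(\Pe)\cong\sts$ into the $3$-torsion condition $\Pe\in{}_3\Pic X$, together with the algebro-geometric input about the structure of ${}_3\Pic X$ on an elliptic curve. This is precisely where the genus-one situation departs from genus zero: in the latter $\Pic X\cong\mathbb{Z}$ is torsion-free, so only $\Pe\cong\sts$ survives and the construction is forced to be defined over $k$ (cf. Lemma~2(i)), whereas here the nontrivial order-three bundles $\N_i$ genuinely contribute and account for all first Tits constructions over $\sts$.
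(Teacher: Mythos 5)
Your argument is correct and is exactly the one the paper intends: the paper in fact states this lemma without proof, but your reduction of the admissibility condition $N_{\sts}(\Pe)\cong\sts$ to $\Pe\in{}_3\Pic X$ is precisely the computation carried out in the genus-zero analogue (the proof of Lemma 2(i)), and the existence and essential uniqueness of the nondegenerate cubic form $N_i$ on $\N_i$ is the fact the paper itself quotes from [Pu2, Lemma 1] just before stating the lemma. Your closing comparison with the torsion-free Picard group in genus zero correctly identifies the only point where the two situations diverge.
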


By the Theorem of Krull-Schmidt, $\mathcal{J}(\sts,\N_i, N_i)\not\cong\mathcal{J}(\sts,\N_j, N_j)$ if $\N_i\not
\cong\N_j$ and $\N_i\not\cong\N_j^\vee$.
Hence if $m=2$ there are at least two non-isomorphic such constructions, if $m=4$ at least three,
if $m=6$ at least four and if $m=8$ there are at least five non-isomorphic ones.

We thus have found examples of commutative associative algebras $\A=\J (\sts, \lb,  N)$ over $X$, which are not defined over $k$,
where $\A(P)$ is a cubic  \'{e}tale algebra over $k(P)$ for all $P\in {\rm Spec}\,X$.

\begin{remark} (analogously to [AEJ3, p.~11]) To a rational point $P\in X$ there corresponds an absolutely indecomposable
vector bundle $\E_{(P)}$ of rank 3 and degree 1 such that ${\rm det}\,\E_{(P)}\cong \M_{(P)}$,
 where $\M_{(P)}\in\Omega(1,1)$
 is the line bundle corresponding to $P\in X$ (cf. [At], [Ti]). There is an exact sequence
$$0\to\sts\to \E_{(P)}\to \M_{(P)}\to 0$$
of $\sts$-modules. Let $\A_{(P)}=\E nd(\E_{(P)})$, then the norm of $\A_{(P)}$ is a nondegenerate cubic form
on $\A_{(P)}$. $\overline{\A_{(P)}}$ is the direct sum of 9 line bundles over $\overline{X}$ of order dividing
 3. $\sts$ embeds into $\A_{(P)}$ canonically. Let $\N$ be a line bundle of order 3, then
 $\N\otimes \E_{(P)}$ is an absolutely indecomposable vector bundle of rank 3 and degree 1 with
 ${\rm det}(\N\otimes \E_{(P)})\cong {\rm det}\,\E_{(P)}$. Hence $\N\otimes \E_{(P)}\cong \E_{(P)}$.
 Tensoring with $\E_{(P)}^\vee$ and identifying $\E_{(P)}\otimes \E_{(P)}^\vee$ and $\A_{(P)}$ we get an
 isomorphism $\N\otimes \E_{(P)}\cong \E_{(P)}$ and in particular, an embedding of $\N=\N\otimes_X\sts\subset
 \N\otimes\A_{(P)}$ into $\A_{(P)}$. This induces a cubic form on $\N$.

 It would be nice to be able to compute this cubic form along similar lines as it was done in
 [AEJ3, 3.1] for a quadratic form.
\end{remark}

\section{ \'Etale First Tits constructions}

We look at some cases where we can compute the \'etale First Tits construction.

\begin{lemma} Let $\A=\sts\times\sts\times\sts$. The \'etale first Tits construction
$\mathcal{J}(\A,\Pe,N)$ is a Jordan algebra over $X$ of rank 9 with underlying module structure
$$\sts^3\oplus \lb\oplus \mathcal{M}\oplus (\lb^\vee\otimes\mathcal{M}^\vee)\oplus
\lb^\vee\oplus \mathcal{M}^\vee\oplus (\lb\otimes\mathcal{M}),$$
for some line bundles $\lb, \mathcal{M}$ over $X$.
\end{lemma}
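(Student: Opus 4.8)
The plan is to read the underlying module structure directly off the definition of the first Tits construction and then classify the rank-one left $\A$-modules of trivial norm, exactly as was done for curves of genus zero in Lemma 3. By the definition of the (generalized) first Tits construction in Section 1.6, the underlying $\sts$-module of $\mathcal{J}(\A,\Pe,N)$ is $\A\oplus\Pe\oplus\Pe^\vee$. Since $\A=\sts\times\sts\times\sts\cong\sts^3$ as an $\sts$-module, the first summand already accounts for the $\sts^3$ in the assertion, and the total $\sts$-rank is $3+3+3=9$. It therefore suffices to determine the module structures of $\Pe$ and $\Pe^\vee$.

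Next I would describe the rank-one left $\A$-modules. Because $\A$ is a product of three copies of $\sts$, the category of $\A$-modules decomposes accordingly, so every locally free left $\A$-module of rank one is of the form $\Pe\cong\lb_1\oplus\lb_2\oplus\lb_3$ for line bundles $\lb_i$ over $X$ ([Ach1, 4.1], see also [Pu2, Example 8]). This component description also makes the norm transparent: $N_\A$ is the product of the three projections, so $N_\A(\Pe)\cong\lb_1\otimes\lb_2\otimes\lb_3$, and the requirement $N_\A(\Pe)\cong\sts$ becomes $\lb_1\otimes\lb_2\otimes\lb_3\cong\sts$. Writing $\lb=\lb_1$ and $\M=\lb_2$, this forces $\lb_3\cong\lb^\vee\otimes\M^\vee$, whence $\Pe\cong\lb\oplus\M\oplus(\lb^\vee\otimes\M^\vee)$. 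Dualizing componentwise, $\Pe^\vee=\h om_\A(\Pe,\A)$ becomes $\lb^\vee\oplus\M^\vee\oplus(\lb\otimes\M)$ as an $\sts$-module, and assembling $\sts^3\oplus\Pe\oplus\Pe^\vee$ yields precisely the claimed structure. To see that a cubic norm $N$ genuinely exists on such a $\Pe$, so that the construction is defined, one fixes an isomorphism $\alpha\colon\lb\otimes\M\otimes(\lb^\vee\otimes\M^\vee)\overset{\sim}{\longrightarrow}\sts$ (available because this tensor product is trivial) and sets $N(x,y,z)=\alpha(x\otimes y\otimes z)$, just as in the proof of Lemma 3.

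The point worth emphasizing is that there is essentially no obstacle here: the argument is formally identical to that of Lemma 3, the only difference being that $\Pic X$ is now an arbitrary abelian group rather than $\mathbb{Z}$, so the line bundles $\lb,\M$ can no longer be normalized to the form $\lb(m_iP_0)$ and must be left abstract. The assertions that $\mathcal{J}(\A,\Pe,N)$ is a Jordan algebra of rank $9$ are automatic from the general machinery of Section 1.6, which guarantees that every first Tits construction produces a Jordan algebra; one need only observe, as above, that the ranks of the three summands add up to $9$.
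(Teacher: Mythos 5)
Your proposal is correct and follows essentially the same route as the paper's proof: classify the rank-one left $\A$-modules of trivial norm as $\lb\oplus\M\oplus(\lb^\vee\otimes\M^\vee)$ (the paper cites [Pu2, Example 8] for this, which you rederive directly from the decomposition of $\A$ as a product), then exhibit the norm via an isomorphism $\alpha\colon\lb\otimes\M\otimes(\lb^\vee\otimes\M^\vee)\to\sts$ and read off the module structure of $\A\oplus\Pe\oplus\Pe^\vee$. The extra details you supply (the componentwise dualization and the rank count) are correct and consistent with what the paper leaves implicit.
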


\begin{proof} Every left $\A$-module $\Pe$ of rank one with $N_\A(\Pe)\cong\sts$ satisfies
$$\Pe\cong \lb\oplus \mathcal{M}\oplus (\lb^\vee\otimes\mathcal{M}^\vee)$$
for some line bundles $\lb, \mathcal{M}$ over $X$ [Pu2, Example 8].
Choose an isomorphism $\alpha:\lb\otimes \mathcal{M}\otimes (\lb^\vee\otimes\mathcal{M}^\vee)\to\sts$, then
$N(x,y,z)=\alpha(x\otimes y\otimes z)$
defines a norm on $\Pe$ and $\mathcal{J}(\A,\Pe,N)$ has the underlying module structure claimed in the assertion.
\end{proof}

\begin{proposition}  Let $k'/k$ be a  quadratic field extension of $k$ with ${\rm Gal}(k'/k)=\langle \sigma\rangle$.
Let $\torus=k'\otimes \sts$.
For $\A=\sts\times \torus$, every left $\A$-module $\Pe$ of rank one with $N_\A(\Pe)\cong\sts$ is
 isomorphic to
$$\lb^{\vee}\otimes\lb^\vee\oplus \torus\otimes \lb$$
as an $\sts$-module, for any line bundle $\lb$ over $X$. In particular, the  \'etale first Tits construction
$\J(\A,\Pe,N)$ has the direct sum of line bundles
$$ \sts^3\oplus \lb^{\vee}\otimes\lb^\vee\oplus \lb\oplus \lb\oplus \lb\otimes\lb\oplus \lb^\vee\oplus \lb^\vee$$
 as underlying $\sts$-module.
\end{proposition}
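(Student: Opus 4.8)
The plan is to follow the pattern of Lemma 7 and of the proof of Lemma 6: decompose a rank-one left $\A$-module according to the splitting $\A=\sts\times\torus$, impose the norm condition to pin down the $\sts$-factor, and then read off the $\sts$-module structures of $\Pe$, $\Pe^\vee$ and $\J(\A,\Pe,N)=\A\oplus\Pe\oplus\Pe^\vee$.

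First I would use $\check{H}^1(X,\A^\times)=\Pic X\oplus\Pic\torus$, exactly as in the proof of Lemma 6, to write any rank-one left $\A$-module as $\Pe\cong\lb'\oplus\F$ with $\lb'$ an invertible $\sts$-module and $\F\in\Pic\torus$, the norm splitting as $N_\A(\Pe)\cong\lb'\otimes N_\torus(\F)$. Hence $N_\A(\Pe)\cong\sts$ is equivalent to $\lb'\cong N_\torus(\F)^\vee$. Taking $\F=\torus\otimes\lb$ for a line bundle $\lb$ over $X$ and invoking the computation $N_\torus(\torus\otimes\lb)\cong\lb^2$ recorded in the proof of Lemma 6, one obtains $\lb'\cong(\lb^2)^\vee=\lb^\vee\otimes\lb^\vee$, so that $\Pe\cong(\lb^\vee\otimes\lb^\vee)\oplus(\torus\otimes\lb)$. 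A cubic norm $N$ on $\Pe$ is then produced as in Lemma 7, from a trivialization of $N_\A(\Pe)\cong\sts$.

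It remains to compute the $\sts$-module structures. Since $k'/k$ has $k$-dimension $2$, the quadratic \'etale algebra $\torus=k'\otimes\sts$ is free of rank $2$ as an $\sts$-module, whence $\torus\otimes\lb\cong\lb\oplus\lb$ and $\A=\sts\times\torus\cong\sts^3$. This gives $\Pe\cong(\lb^\vee\otimes\lb^\vee)\oplus\lb\oplus\lb$ and, dually, $\Pe^\vee\cong(\lb\otimes\lb)\oplus\lb^\vee\oplus\lb^\vee$. Summing the three summands of $\J=\A\oplus\Pe\oplus\Pe^\vee$ then yields the asserted structure $\sts^3\oplus(\lb^\vee\otimes\lb^\vee)\oplus\lb\oplus\lb\oplus(\lb\otimes\lb)\oplus\lb^\vee\oplus\lb^\vee$.

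The main obstacle is the scope of the word ``every''. Over a curve of genus zero the corresponding cleanness in Lemma 6 rested on the fact that, outside the quaternion-splitting situation, $\Pic X'$ coincides with the image of $\Pic X$, so that every $\F\in\Pic\torus$ is a pullback $\torus\otimes\lb$. Over an elliptic curve $\Pic\torus=\Pic X'$ is genuinely larger than the pullbacks from $\Pic X$, so the delicate point is to restrict to those $\F$ that are defined over $X$; precisely for such $\F$ the identification $\F\cong\torus\otimes\lb$ holds and the computation above goes through, producing one admissible module $\Pe$ for each line bundle $\lb$ over $X$.
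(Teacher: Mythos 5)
Your reduction and computation track the paper's own proof closely: the splitting $\Pe\cong\lb'\oplus\F$ with $\F\in{\rm Pic}\,\torus$ and $\lb'\in{\rm Pic}\,X$, the norm condition $\lb'\cong N_\torus(\F)^\vee$, and the evaluation $N_\torus(\torus\otimes\lb)\cong\lb\otimes\lb$ are exactly the steps taken there (the paper cites [Pu2, Lemma 5] for the splitting where you invoke $\check{H}^1(X,\A^\times)={\rm Pic}\,X\oplus{\rm Pic}\,\torus$; this is the same reduction). The trouble is that you stop precisely where the content of the proposition begins. The statement is universal --- \emph{every} rank-one left $\A$-module $\Pe$ with $N_\A(\Pe)\cong\sts$ has the displayed form --- so one must show that every $\F\in{\rm Pic}\,\torus$ occurring in such a $\Pe$ is a pullback $\torus\otimes\lb$ of a line bundle from $X$. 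You correctly single this out as ``the delicate point'' and then leave it open; as written, your argument only produces one admissible $\Pe$ for each $\lb$, i.e.\ the existence half, not the exhaustiveness that the proposition asserts.

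The paper closes this gap by a case distinction on the type of $X$ (I, II, III in the sense of Section 5). For $X$ of type I or III, or of type II with $k'\not\cong l_2$, it asserts that the canonical map ${\rm Pic}\,X\to{\rm Pic}\,X'$, $\lb\mapsto\lb\otimes\mathcal{O}_{X'}$, is bijective, so that every $\F\in{\rm Pic}\,\torus\cong{\rm Pic}\,X'$ is automatically of the form $\torus\otimes\lb$. In the remaining case (type II and $k'\cong l_2$) it base-changes to $X'$, applies the previous case there to get $\Pe\otimes\mathcal{O}_{X'}\cong\lb'^{\vee}\otimes\lb'^{\vee}\oplus\lb'\oplus\lb'$ for a line bundle $\lb'$ on $X'$, and then uses the forced $\sts$-module decomposition $\Pe\cong\lb\oplus\Pe_0$ together with Krull--Schmidt to exclude the possibility that $\lb'$ is not defined over $X$. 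Note also that your parenthetical claim that ${\rm Pic}\,\torus={\rm Pic}\,X'$ is ``genuinely larger'' than the image of ${\rm Pic}\,X$ stands in direct tension with the bijectivity the paper invokes in the generic case; whichever way that issue resolves, it cannot be waved at, because it \emph{is} the proof of the ``every'' in the statement.
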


\begin{proof} By [Pu2, Lemma 5], every left $\A$-module $\Pe$
of rank one satisfies $\Pe\cong \lb\oplus\Pe_0$ with $\Pe_0\in {\rm Pic}\torus$ and a line bundle $\lb\in {\rm Pic}\,X$, and
 $ N_\A(\Pe)\cong \sts$ iff $\lb^\vee\cong N_\torus(\Pe_0)$, where $N_\torus$ denotes the norm on $ \torus$.
   Let $X'=X\times_kk'$. Identify ${\rm Pic}\,X'$ and ${\rm Pic}\,\torus$.
 First, consider the case that $X$ has type I or III, or that $X$ has type II and $k'$ is not isomorphic to $l_2$.
Then the canonical map $${\rm Pic}\,X\to {\rm Pic}\,X', \,\,\lb\to\lb\otimes \mathcal{O}_{X'}$$
 is bijective. Hence every $\Pe_0\in {\rm Pic}\,\torus$ is of the form $\Pe_0
\cong (k'\otimes \sts)\otimes \lb$. We get
$$N_\torus(\Pe_0)\cong N_\torus(\torus\otimes \lb)\cong\lb\otimes\lb.$$
 Hence each $\Pe$, which can be used for the first Tits construction, is isomorphic to
 $\Pe\cong \lb^{\vee}\otimes\lb^\vee\oplus \torus\otimes \lb$.

Now let $X$ have type II and $k'\cong l_2$. Then $X'$ is of type II and, by the above,
$$\Pe\otimes\mathcal{O}_{X'}\cong \lb'^{\vee}\otimes\lb'^\vee\oplus\lb'\oplus\lb'$$
 for a line bundle $\lb'$ over $X'$. If this line bundle is defined over $X$, then
 $\Pe\cong \lb^{\vee}\otimes\lb^\vee\oplus \torus\otimes \lb$ for some line bundle $\lb.$ If it is not defined
 over $X$, we obtain a contradiction, since we know that we must have $\Pe\cong \lb\oplus\Pe_0$
 as $\sts$-module for some line bundle $\lb$ over $X$.
\end{proof}

\begin{example}
Let $\A=\sts\times\torus$ for some  quadratic \'etale algebra $\torus$ over $X$ which is not defined over $k$.
 Then $X$ has type I or II.
\\ (i) Suppose $X$ has type I and $\torus={\rm Cay}(\sts,\mathcal{L}_i,N_{\lb_i})$ for $i=1,2$ or 3.
 For every line bundle $\M$ over $X$, $\M\otimes\torus \in {\rm Pic}\torus$ and $N_\torus (\M\otimes\torus )
 \cong \M^2$. Thus $\Pe\cong\M^{\vee 2}\oplus \M\otimes\torus$ is a left $\A$-module  of rank one such that
$ N_\A(\Pe)\cong \M^{\vee 2}\otimes N_\torus (\M\otimes\torus )\cong  \sts$ and hence carries a cubic norm $N$.
Therefore
$$\J(\A,\Pe,N)\cong \sts\oplus\sts\oplus \lb_i\oplus \M^{\vee 2}\oplus \M\oplus \M\otimes\lb_i \oplus
\M^{ 2}\oplus \M^\vee\oplus \M^\vee\otimes\lb_i$$
as $\sts$-module.
\\ (ii) Suppose $X$ has type $II$ and $\torus={\rm Cay}(\sts,\mathcal{L}_1,N_{\lb_1})$.
Now
$tr_{l_2/k}(\lb_2) \in {\rm Pic}\torus$ as well [Pu1, 3.3]. For every
line bundle $\M$ over $X$, we have  $\M\otimes tr_{l_2/k}(\lb_2) \in {\rm Pic}\torus$
  and $N_\torus(\M\otimes tr_{l_2/k}(\lb_2))\cong \M^2$.  Thus $\M^{\vee 2}\oplus tr_{
l_2/k}(\M\otimes\lb_2)$ is a left $\A$-module $\Pe$ of rank one such that
$ N_\A(\Pe)\cong \M^{\vee 2}\otimes N_\torus (tr_{ l_2/k}(\M\otimes\lb_2)\cong  \sts$ and hence carries a cubic norm $N$.
Therefore
$$\J(\A,\Pe,N)\cong  \sts\oplus\sts\oplus \lb_3\oplus \M^{\vee 2}\oplus  tr_{l_2/k}(\M\otimes\lb_2) \oplus \M^{ 2}\oplus
 tr_{l_2/k}(\M^\vee\otimes\lb_2)$$
for some line bundle $\M$.
\end{example}

\begin{proposition} Let $k'/k$ be a cubic Galois field extension of $k$ and ${\rm Gal}(k'/k)=\{id,\sigma_1,\sigma_2\}$.
Let $\A=k'\otimes \sts$. \\
(i) Let $X$ be of type I or II, or let $X$ have type III and $k'$ be not isomorphic
to $l_1$. Every left $\A$-module $\Pe$ of rank one with $N_\A(\Pe)\cong\sts$ is isomorphic to
 $\A\otimes \N_i$.\\
 (ii) Let $X$ have type III and $k'\cong l_1$. Every left
 $\A$-module $\Pe$ of rank one with $N_\A(\Pe)\cong\sts$ is a direct sum of line bundles isomorphic to
$\A\otimes \N_i$, or indecomposable and isomorphic to $tr_{l_1/k}(\N_i')$, with $\N_i'$ a line bundle of order 3 over $X'$,
which is not defined over $X$ and satisfies $\N_i'\cong^{\sigma_i} \N_i'$ for $i=1,2$.
\\ In particular, the  \'etale first Tits construction $\J(\A,\Pe,N)$ has the direct sum of line bundles
$$\sts^3\oplus \N_i^3\oplus [\N_i^\vee]^3$$
as underlying module structure, or
$$\sts^3\oplus tr_{l_1/k}(\N_i')\oplus tr_{l_1/k}(\N_i'^\vee), $$
if $X$ has type III and $k'\cong l_1$, with $\N_i'$ as in (ii).
\end{proposition}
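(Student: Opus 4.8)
The plan is to follow the pattern of Lemma 4, working on the cubic Galois cover $f\colon X'=X\times_k k'\to X$ and translating the classification into a statement about line bundles on $X'$. First I would identify $\Pic\,\A\cong\Pic\,X'$, so that a rank-one left $\A$-module $\Pe$ is the same datum as a line bundle $\Pe'$ on $X'$. Since $X=X'/G$ for $G={\rm Gal}(k'/k)=\{id,\sigma_1,\sigma_2\}$, one has $f^*N_\A(\Pe')\cong\Pe'\otimes{}^{\sigma_1}\Pe'\otimes{}^{\sigma_2}\Pe'$, and so the admissibility requirement $N_\A(\Pe)\cong\sts$ forces $\Pe'\otimes{}^{\sigma_1}\Pe'\otimes{}^{\sigma_2}\Pe'\cong\mathcal{O}_{X'}$; that is, $\Pe'$ lies in the kernel of the norm element of $\mathbb{Z}[G]$ acting on $\Pic\,X'$. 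For a bundle defined over $X$, $\Pe'\cong f^*\lb$, the projection formula gives $N_\A(\A\otimes\lb)\cong\lb^3$, so admissibility reads $\lb^3\cong\sts$; hence $\lb=\N_i$ has order dividing $3$ and $\Pe\cong\A\otimes\N_i$.

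For part (i) I would show that in these cases every admissible $\Pe'$ is already defined over $X$. This is where the structure theory of $_3\Pic$ under scalar extension enters: by [At, Lemma 22] and [AEJ1,2,3], for $X$ of type I or II, or of type III with $k'\not\cong l_1$, the extension $k'$ adjoins no new line bundle of order $3$, so that the relevant classes of $\Pic\,X'$ all descend and the norm-kernel reduces to the order-$3$ bundles found above. Together with the previous step this forces $\Pe\cong\A\otimes\N_i$, and feeding this into the first Tits construction $\J=\A\oplus\Pe\oplus\Pe^\vee$ yields the stated decomposition $\sts^3\oplus\N_i^3\oplus[\N_i^\vee]^3$.

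For part (ii), with $X$ of type III and $k'\cong l_1$, the extension splits the irreducible cubic $q$, so $X'$ becomes of type I and carries line bundles $\N_i'$ of order $3$ that are not defined over $X$. I would single out the $G$-invariant ones, $\N_i'\cong{}^{\sigma_i}\N_i'$: for these $\Pe'\otimes{}^{\sigma_1}\Pe'\otimes{}^{\sigma_2}\Pe'\cong(\N_i')^{3}\cong\mathcal{O}_{X'}$, so they are admissible, yet they fail to descend to $X$ because of the cohomological (Brauer-type) obstruction attached to the cyclic extension $k'/k$ --- which is precisely what makes $f_*\N_i'=tr_{l_1/k}(\N_i')$ an indecomposable $\A$-module of rank one rather than a split $\A\otimes\N_i$. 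An appeal to the Theorem of Krull--Schmidt then shows that an admissible $\Pe$ which is not defined over $X$ must be one of these, and $\J=\A\oplus\Pe\oplus\Pe^\vee$ gives $\sts^3\oplus tr_{l_1/k}(\N_i')\oplus tr_{l_1/k}((\N_i')^\vee)$.

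The hard part is the global control of the norm-kernel, i.e. describing $\{\Pe'\in\Pic\,X'\mid \Pe'\otimes{}^{\sigma_1}\Pe'\otimes{}^{\sigma_2}\Pe'\cong\mathcal{O}_{X'}\}$ as a $\mathbb{Z}[G]$-module and checking that in the stated cases it contributes only the bundles listed. Concretely one must verify that the image of $(\sigma_1-1)$ and the Tate cohomology $\hat H^{-1}(G,\Pic\,X')$ produce no further admissible classes beyond the order-$3$ bundles and, in case (ii), the exceptional $\N_i'$; this separation of genuine Galois descent from mere $G$-invariance, and the determination of exactly when new order-$3$ bundles appear over $X'$, is the content that has to be read off from the explicit description of $_3\Pic$ and of the Galois action in [At] and [AEJ1,2,3]. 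The remaining steps are the bookkeeping of the first paragraph and the mechanical passage to $\J=\A\oplus\Pe\oplus\Pe^\vee$.
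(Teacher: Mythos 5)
Your overall strategy coincides with the paper's: identify ${\rm Pic}\,\A$ with ${\rm Pic}\,X'$, compute $N_\A(\A\otimes\lb)\cong\lb^3$ for bundles defined over $X$, and use Krull--Schmidt for the classes that do not descend. But the decisive step of part (i) is missing. The paper's proof rests on a single input: in the stated cases the restriction map ${\rm Pic}\,X\to{\rm Pic}\,X'$ is \emph{bijective}, so every rank-one left $\A$-module is of the form $\A\otimes\lb$, and only then does the norm condition force $\lb$ to have order $3$. You replace this with the claim that ``$k'$ adjoins no new line bundle of order $3$,'' which is not the statement you need: an admissible $\Pe'\in{\rm Pic}\,X'$ that does not descend is not a priori $3$-torsion --- its order-$3$ property is \emph{derived} only after one knows it descends (resp.\ is Galois-invariant). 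Indeed, as your own closing paragraph implicitly concedes, every class in the image of $\sigma_1-1$ on ${\rm Pic}\,X'$ satisfies $\Pe'\otimes{}^{\sigma_1}\Pe'\otimes{}^{\sigma_2}\Pe'\cong\mathcal{O}_{X'}$ automatically, and ruling such classes out is exactly what the surjectivity of ${\rm Pic}\,X\to{\rm Pic}\,X'$ does in the paper. You correctly flag this as ``the hard part'' and then defer it wholesale to [At] and [AEJ1,2,3] without identifying which statement there does the job; that is a genuine gap, not bookkeeping.

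Part (ii) has a smaller version of the same problem. You assert that ``an appeal to Krull--Schmidt shows that an admissible $\Pe$ not defined over $X$ must be one of these,'' but the actual argument --- comparing the decomposition $\Pe\otimes\mathcal{O}_{X'}\cong\N_i'\oplus{}^{\sigma_1}\N_i'\oplus{}^{\sigma_2}\N_i'$ forced by Galois descent with the decomposition into order-$3$ line bundles, and extracting from Krull--Schmidt both $\N_i'\cong{}^{\sigma_i}\N_i'$ and $(\N_i')^3\cong\mathcal{O}_{X'}$ --- is precisely the content of the paper's proof of (ii) and is not carried out in your sketch. A further small point: that $X'$ becomes of type I when $k'\cong l_1$ is a statement about $2$-torsion and by itself says nothing about new $3$-torsion line bundles, so your assertion that the exceptional $\N_i'$ actually exist is unjustified; the paper deliberately leaves open whether case (ii) ever occurs, and the proposition requires only the classification, not existence.
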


\begin{proof} Let $X'=X\times_kk'$. Identify ${\rm Pic}\,X'$ and ${\rm Pic}\,\A$.\\
(i) The map
$${\rm Pic}\,X\to {\rm Pic}\,X', \,\, \lb\to\lb\otimes \mathcal{O}_{X'}.$$
 is bijective. Therefore every $\Pe\in {\rm Pic}_l\,\A$ is of the form $\Pe
 \cong (k'\otimes \sts)\otimes \lb$.
If $N$ is the norm of $k'\otimes_k \sts$, then
$$\sts\cong N(\Pe)\cong N((k'\otimes \sts)\otimes \lb)\cong\lb^3$$
implies that $\lb$ has order $3$. Hence each $\Pe$ which can be used for the first Tits construction
satisfies $\Pe\cong \A\otimes \N_i$.
\\ (ii) By the above,
$$\Pe\otimes\mathcal{O}_{X'}\cong \N_i'\oplus \N_i'\oplus \N_i'$$
 for a line bundle $\N_i'$ over $X'$ of order 3. If this line bundle is defined over $X$, then
 $\Pe\cong \N_i\oplus \N_i\oplus \N_i $ for a line bundle $\N_i$ of order 3 over $X$.
If it is not defined over $X$, then $\Pe\cong tr_{l_1/k}(\N_i')$ is indecomposable. Moreover, then
$ \N_i'\oplus \N_i'\oplus \N_i' \cong  \N_i'\oplus^{\sigma_1} \N_i'\oplus^{\sigma_2} \N_i'$ implies that
$\N_i'\cong^{\sigma_i} \N_i'$ for $i=1,2$, using Krull-Schmidt.
\end{proof}

It is not clear if and when case (ii) can happen. The well-known situation for line bundles of order 2 over $X$ is an indication
that, if those of order 3 behave similarly, the condition that $\N_i'\cong^{\sigma_i} \N_i'$ for $i=1,2$, which is necessary
for the line bundle $\N_i'$ which is not supposed to be defined over $X$, might never be true.

\begin{example} Let $\A=\mathcal{J}(\sts,\N_i,N_i)$ with non-trivial $\N_i$ be a commutative associative
 Jordan algebra over $X$ of rank 3. For every $0\leq j\leq m$,
the first Tits construction $\mathcal{J}(\A,\N_j, N)$ yields a Jordan algebra over $X$ of rank 9
for a suitable cubic form $N:\N_i\otimes\A\to\sts$. Therefore the underlying $\sts$-module structure
of such a construction is a direct sum of line bundles of order 3:
$$(\sts\oplus \N_i\oplus \N_i^2)\oplus (\N_j\oplus \N_i\otimes\N_j\oplus \N_i^2\otimes\N_j)\oplus
(\N_j^2\oplus \N_i^2\otimes\N_j^2\oplus \N_i\otimes\N_j^2).$$
\end{example}

\begin{remark}
Let $\E$ be a commutative associative $\sts$-algebra of constant rank 3 such that $\E(P)$ is a cubic
 \'etale $k(P)$-algebra for all $P\in X$. Let $\xi$ be the generic point of the elliptic curve $X$.
\\ (i) If the two  \'etale first Tits constructions $\J=\mathcal{J}(\E,\Pe,\beta N)$ and
$\J'=\mathcal{J}(\E,\Pe,\beta'N)$ are isomorphic over $X$, then $\J_\xi\cong\J'_\xi$ implies
$(\Pe,N)_\xi\cong (\E_\xi,\alpha N_{\E_\xi})$ and so $\beta \equiv {\beta'}{\rm mod}
\, N_{\E_\xi}(\E_\xi^\times)$ or $\beta \equiv {\beta'}^2{\rm mod} \, N_{\E_\xi}(\E_\xi^\times)$ by [P-T, 4.3].
\\ (ii) If the two  \'etale first Tits constructions $\J=\mathcal{J}(\E,\Pe,N)$ and
$\J'=\mathcal{J}(\E,\Pe',N')$ are isomorphic over $X$, then $\J_\xi\cong\J'_\xi$ implies
$(\Pe,N)_\xi\cong (\E_\xi,\alpha)$ and $ (\Pe',N')_\xi\cong (\E_\xi,\alpha')$ with $\alpha \equiv {\alpha'}{\rm mod}
\, N_{\E_\xi}(\E_\xi^\times)$ or $\alpha \equiv {\alpha'}^2{\rm mod}
\, N_{\E_\xi}(\E_\xi^\times)$.
\end{remark}

\section{Albert algebras over elliptic curves of the kind $\h_3(\comp)$}

Although we do not have a full classifiacation of the octonion algebras over an elliptic curve yet, the information
we have so far serves us to give a class of examples of Albert algebras of the kind $\h_3(\comp)$.
 Obviously, the underlying $\sts$-module structure of the Albert algebra $\h_3(\comp)$ is given by
$$\sts^3\oplus \comp^3. $$
If $\comp$ is a split octonion algebra over $X$, then $\h_3(\comp)$ is a first Tits construction starting with the Azumaya
algebra ${\rm Mat}_3(\sts)$ [Pu2, Proposition 4]. Using [Pu1, Proposition 4.1] we obtain:

\begin{theorem} Let $\comp$ be a split octonion algebra. Then the underlying $\sts$-module structure of
the Albert algebra $\h_3(\comp)$ is given by one of the following:\\
(i) $\sts^{27}$ (and $\h_3(\comp)$ is defined over $k$), if $\comp\cong {\rm Zor}\,k\otimes\sts$;
\\ (ii) $$\sts^9\oplus [\lb\oplus \N\oplus \lb^\vee\otimes \N^\vee]^3\oplus [\lb^\vee\oplus \N^\vee\oplus \lb\otimes \N]^3$$
for line bundles $\lb$ and $\N$ over $X$ which are not both isomorphic to $\sts$, if $\comp\cong {\rm Zor}(\lb\oplus \N\oplus \lb^\vee\otimes \N^\vee)$;
\\ (iii) $$\sts^9\oplus [ \E \oplus {\rm det}\,\E^\vee]^3\oplus [ \E^\vee \oplus {\rm det}\,\E]^3$$
 for an absolutely
indecomposable $\E\in\Omega (2,d)$, if $\comp\cong {\rm Zor}(\E\oplus {\rm det}\,\E^\vee)$.
 Given two absolutely indecomposable $\E,$ $\E'\in \Omega (2,d)$, the corresponding Albert
algebras are not isomorphic, if $\E\not\cong\E'$ and $\E\not\cong\E'^\vee$.
\\ (iv) $$\sts^9\oplus [tr_{l/k}(\N)\oplus ({\rm det}\,tr_{l/k}(\N))^\vee]^3
\oplus [tr_{l/k}(\N)^\vee\oplus ({\rm det}\,tr_{l/k}(\N))]^3,$$
if $\comp\cong {\rm Zor}(tr_{l/k}(\N)\oplus ({\rm det}\,tr_{l/k}(\N))^\vee)$. Here, $l/k$ is a quadratic
field extension and $\N$ a line bundle over $X_l=X\times_kl$ not defined over $X$.
 Given two  indecomposable vector bundles $tr_{l/k}(\N)$ and $tr_{l'/k}(\N')$, the corresponding Albert
algebras are not isomorphic, if $tr_{l/k}(\N)\not\cong tr_{l'/k}(\N')$ and $tr_{l/k}(\N)\not\cong tr_{l'/k}(\N')^\vee$.
\\ (v)  $$\sts^9\oplus [\N_i\otimes\F_3]^3 \oplus [\N_i^\vee\otimes\F_3]^3,$$ if $\comp\cong {\rm Zor}(\N_i\otimes\F_3,\alpha)$.
For two non-isomorphic line bundles $\N_i$ and $\N_j$ of order 3 with $\N_i\not\cong\N_j^\vee$,
the corresponding Albert algebras are not isomorphic.
\\ (vi)  $$\sts^9\oplus [tr_{l/k}(\N)]^3 \oplus [tr_{l/k}(\N)^\vee]^3,$$ if $\comp\cong {\rm Zor}(tr_{l/k}(\N))$.
Here, $l/k$ is a cubic field extension and $\N$ a line bundle over $X_l=X\times_kl$ not defined over $X$.
Given two  indecomposable vector bundles $tr_{l/k}(\N)$ and $tr_{l'/k}(\N')$, the corresponding Albert
algebras are not isomorphic, if $tr_{l/k}(\N)\not\cong tr_{l'/k}(\N')$ and $tr_{l/k}(\N)\not\cong tr_{l'/k}(\N')^\vee$.
\\ The Albert algebras in cases (i) to (vi) are mutually non-isomorphic.
\end{theorem}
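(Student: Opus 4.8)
The plan is to derive everything from the underlying $\sts$-module structures recorded in (i)--(vi). An isomorphism of the Jordan algebras $\h_3(\comp)$ is in particular an isomorphism of $\sts$-modules, and over the elliptic curve $X$ the Theorem of Krull--Schmidt guarantees that the multiset of isomorphism classes of indecomposable direct summands is an invariant of a vector bundle. Since the non-isomorphism of two algebras belonging to the \emph{same} case is already asserted in the statements of (iii)--(vi), it suffices to show that the six module structures cannot coincide, i.e. that algebras belonging to two different cases are never isomorphic.

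First I would separate the cases by the maximal rank of an indecomposable summand. In (i) and (ii) every summand is a line bundle; in (iii) the summands $\E,\E^\vee$ and in (iv) the summands $tr_{l/k}(\N), tr_{l/k}(\N)^\vee$ are indecomposable of rank $2$; in (v) the summands $\N_i\otimes\F_3,\N_i^\vee\otimes\F_3$ and in (vi) the summands $tr_{l/k}(\N),tr_{l/k}(\N)^\vee$ are indecomposable of rank $3$. As the largest rank of an indecomposable summand is a Krull--Schmidt invariant, this separates the three groups (i),(ii); (iii),(iv); (v),(vi).

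It remains to distinguish the two cases inside each group. For (i) versus (ii) I would use global freeness: the module in (i) is $\sts^{27}$, whereas in (ii) at least one of $\lb,\N$ is nontrivial, so a nontrivial line bundle occurs as a summand and the module is not free. For (iii) versus (iv) and (v) versus (vi) the discriminating feature is \emph{absolute} indecomposability, which I would detect by base change to $\overline{X}=X\times_k\overline{k}$. In (iii) the bundle $\E\in\Omega(2,d)$ stays indecomposable over $\overline{X}$, while in (iv) the bundle $tr_{l/k}(\N)$ splits over the quadratic extension $l$ (into $\N\oplus{}^{\omega}\N$), hence into line bundles over $\overline{X}$; similarly in (v) the bundle $\N_i\otimes\F_3$ is absolutely indecomposable of rank $3$, being $\F_3$ tensored by a line bundle, whereas in (vi) the bundle $tr_{l/k}(\N)$ with $l/k$ cubic decomposes over $\overline{X}$ into line bundles. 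Thus the number of rank $2$ (respectively rank $3$) indecomposable summands surviving over $\overline{X}$ is positive in (iii) (respectively (v)) and zero in (iv) (respectively (vi)).

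The inputs I would invoke are standard consequences of Atiyah's classification and [AEJ1]: tensoring an (absolutely) indecomposable bundle by a line bundle preserves (absolute) indecomposability, and the trace $tr_{l/k}$ of a bundle coming from a nontrivial field extension becomes decomposable after base change to that extension. I expect the main obstacle to be making the base-change step precise: one must check that the underlying $\sts$-module of $\h_3(\comp)$ base-changes to the underlying $\mathcal{O}_{\overline{X}}$-module of $\h_3(\comp\otimes\overline{k})$, which is immediate from $\h_3(\comp)\cong\sts^3\oplus\comp^3$, and that Krull--Schmidt applies over $\overline{X}$, so that absolute indecomposability of the top-rank summands is genuinely detected. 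Once this is secured, the comparison of the above invariants is routine.
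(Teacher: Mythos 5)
Your Krull--Schmidt argument for the mutual non-isomorphism of the six cases is correct and is exactly the tool the paper relies on (the introduction states explicitly that Krull--Schmidt over curves of genus zero and one is what makes such algebras easy to distinguish): separating the groups by the maximal rank of an indecomposable summand, then distinguishing within each group by global freeness resp.\ by absolute indecomposability detected after base change to $\overline{X}$, all goes through, since an algebra isomorphism is in particular a module isomorphism and base change respects the direct sum decomposition.

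The one genuine gap is that you prove only the final assertions of the theorem while taking the six module structures as given, whereas that list is the theorem's primary content. Establishing it rests on two inputs you never invoke: the classification of split octonion algebras over an elliptic curve in [Pu1, Proposition 4.1], which says every split $\comp$ is ${\rm Zor}(\mathcal{W})$ for a rank-3 bundle $\mathcal{W}$ with trivial determinant and enumerates the possible $\mathcal{W}$ (yielding exactly cases (i)--(vi)), together with the module identities $\h_3(\comp)\cong\sts^3\oplus\comp^3$ and ${\rm Zor}(\mathcal{W})\cong\sts^2\oplus\mathcal{W}\oplus\mathcal{W}^\vee$, which give $\sts^9\oplus\mathcal{W}^3\oplus\mathcal{W}^{\vee3}$. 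The paper's own proof consists of precisely this citation plus the displayed identity $\h_3(\comp)\cong\sts^3\oplus\comp^3$; once you add that step, your argument completes the theorem. A minor further point: the within-case non-isomorphism claims in (iii)--(vi) are part of what is to be proved, not something ``already asserted'' that you may assume --- but they follow from the same comparison of multisets of indecomposable summands, e.g.\ in (iii) an isomorphism would force $\E\cong\E'$ or $\E\cong\E'^\vee$ by Krull--Schmidt, so no new idea is required.
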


Using [Pu1, Proposition 4.1] we obtain:

\begin{theorem} Let $\comp$ be a Cayley-Dickson doubling of the quaternion algebra $\E nd_X(\F_2)$. Then the underlying $\sts$-module structure of
the Albert algebra $\h_3(\comp)$ is given by one of the following:\\
(i) $$\sts^9\oplus [\F_3\oplus \F_3]^3$$
\noindent (ii) $$\sts^6\oplus [\F_3\oplus\lb_i\oplus\lb_i\otimes \F_3]^3$$
with $1\leq i\leq 3$. For two non-isomorphic line bundles $\lb_i,$ $\lb_j$, $i\not=j$, the
corresponding Albert algebras are not isomorphic.
\\ (iii) $$\sts^6\oplus [\F_3\oplus \lb\otimes\F_2\oplus  \lb^\vee\otimes\F_2]^3$$ for any line bundle $\lb$ over $X$;
\\ (iv) $$\sts^6\oplus [\F_3\oplus tr_{l/k}(\N\otimes\F_2)]^3,$$ where $l/k$ is a quadratic field extension
with ${\rm Gal}(l/k)=\langle\sigma\rangle$ and
$\N$ a line bundle over $X_l=X\times_kl$ not defined over $X$, which is not self-dual and satisfies $\N^\vee\cong^\sigma\N$.
\\ The Albert algebras in cases (i) to (iv) are mutually non-isomorphic. The ones in cases
(ii) to (iv) are not isomorphic to those listed in Theorem 9.
\end{theorem}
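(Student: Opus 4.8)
The plan is to transport the classification of Cayley--Dickson doublings of $\E nd_X(\F_2)$ from [Pu1, Proposition 4.1] to the Albert algebra by means of the identity $\h_3(\comp)\cong\sts^3\oplus\comp^3$, and then to separate the resulting algebras with the Theorem of Krull--Schmidt. The first step is to fix the $\sts$-module structure of the quaternion algebra $\D=\E nd_X(\F_2)$ itself. Over an elliptic curve in characteristic zero $\F_2$ is self-dual, $\F_2\cong\F_2^\vee$, and Atiyah's tensor-product rules give the rank-two Clebsch--Gordan decomposition $\F_2\otimes\F_2\cong\sts\oplus\F_3$ (with $\F_1\cong\sts$). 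Hence $\D=\F_2\otimes\F_2^\vee\cong\sts\oplus\F_3$ as an $\sts$-module, and every doubling has the shape $\comp={\rm Cay}(\D,\Pe,N_\Pe)=\D\oplus\Pe$ for a rank-one right $\D$-module $\Pe$.

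Next I would run through the four families of admissible doubling data $\Pe$ produced by [Pu1, Proposition 4.1]. By Morita equivalence a rank-one right $\E nd_X(\F_2)$-module is of the form $\F_2\otimes\G$ for a rank-two bundle $\G$, and the bundles $\G$ satisfying the norm and self-duality constraints yield, after applying Atiyah's rules and the projection formula $tr_{l/k}(\N\otimes\F_2)\cong\F_2\otimes tr_{l/k}(\N)$ once more, the four module structures $\Pe\cong\sts\oplus\F_3$, $\Pe\cong\lb_i\oplus\lb_i\otimes\F_3$ with $\lb_i$ of order two (so $X$ is of type I), $\Pe\cong(\lb\oplus\lb^\vee)\otimes\F_2$, and $\Pe\cong tr_{l/k}(\N\otimes\F_2)$. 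Substituting $\comp=\D\oplus\Pe$ into $\h_3(\comp)\cong\sts^3\oplus\comp^3$ and collecting the trivial summands contributed by the three copies of $\D\cong\sts\oplus\F_3$ (and, in case (i), by $\Pe$) gives exactly the module structures (i)--(iv); along the way I would record that $\lb_i\otimes\F_3$, $\lb\otimes\F_2$ and $tr_{l/k}(\N\otimes\F_2)$ remain indecomposable, of ranks $3$, $2$ and $4$ respectively.

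The distinctness assertions I would settle purely through Krull--Schmidt, which holds over $X$, so that the multiset of indecomposable summands is an isomorphism invariant. The multiplicity of $\sts$ equals $9$ in case (i) but only $6$ in cases (ii)--(iv); since every Albert algebra in Theorem 9 carries $\sts$ with multiplicity $9$ or $27$, this single invariant both separates (i) from (ii)--(iv) and shows that (ii)--(iv) cannot occur in Theorem 9. To separate (ii), (iii) and (iv) from one another I would compare their remaining indecomposable summands: only (iv) has an indecomposable summand of rank $4$, namely $tr_{l/k}(\N\otimes\F_2)$; only (ii) has a non-trivial line-bundle summand, namely $\lb_i$; and only (iii) carries rank-two indecomposables $\lb\otimes\F_2$ and $\lb^\vee\otimes\F_2$. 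Finally, within case (ii) the summand $\lb_i$ is itself the invariant distinguishing $\lb_i\not\cong\lb_j$ for $i\neq j$.

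The main obstacle is not the arithmetic of $\h_3(\comp)\cong\sts^3\oplus\comp^3$ but the control of the indecomposable decompositions underneath it: one must certify, via Atiyah's classification and the behaviour of $tr_{l/k}$ under the base change $X_l\to X$, that $\lb_i\otimes\F_3$, $\lb\otimes\F_2$ and especially $tr_{l/k}(\N\otimes\F_2)$ are indecomposable of the stated ranks and that no spurious copy of $\sts$ splits off. Since the entire non-isomorphism argument rests on the multiplicity of $\sts$ and on these summand ranks, it is exactly this indecomposability bookkeeping that has to be carried out carefully; once it is in place, the remaining verifications are routine.
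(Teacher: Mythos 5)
Your proposal is correct and follows essentially the same route as the paper, which proves Theorem 10 simply by invoking the classification of Cayley--Dickson doublings of $\E nd_X(\F_2)$ from [Pu1, Proposition 4.1], substituting into the module identity $\h_3(\comp)\cong\sts^3\oplus\comp^3$ with $\E nd_X(\F_2)\cong\sts\oplus\F_3$, and separating the resulting algebras via the Theorem of Krull--Schmidt (comparing the multiplicity of $\sts$ and the ranks of the indecomposable summands). Your explicit bookkeeping of the indecomposable summands and of the multiplicity of $\sts$ (9 versus 6, versus at least 9 in Theorem 9) is exactly the intended argument, which the paper leaves implicit.
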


Let $\comp$ be a Cayley-Dickson doubling of the quaternion algebra $\E nd_X(\E)$ for
an absolutely indecomposable vector bundle $\E\in \Omega(2,1)$.
Using [Pu1, Proposition 4.1] we obtain:

\begin{theorem} Let $X$ have type I. Then the underlying $\sts$-module structure of
the Albert algebra $\h_3(\comp)$ is given by one of the following:\\
(i)   $$\sts^9\oplus \lb_1^6\oplus\lb_2^6\oplus\lb_3^6,$$
\noindent (ii) $$\sts^6\oplus [ \lb_1\oplus\lb_2\oplus\lb_3\oplus \lb\otimes\E\oplus  \lb^\vee\otimes\E^\vee]^3,$$
for any line bundle $\lb$ over $X$,
\\ (iii) $$\sts^6\oplus [ \lb_1\oplus\lb_2\oplus\lb_3\oplus tr_{l/k}(\N\otimes (\E^\vee\otimes\mathcal{O}_{X_l}))]^3.$$
Here, $l/k$ is a quadratic field extension with ${\rm Gal}(l/k)=\langle\sigma\rangle$ and $\N$ a line bundle over
$X_l=X\times_kl$ not defined over $X$, which is not self-dual and satisfies $\N\cong ^\sigma\N\cong ({\rm det}\,\E
\otimes\mathcal{O}_{X_l})$.\\
 The Albert algebras in case (iii) are mutually non-isomorphic to those in case (i) and (ii). Within (i) and (ii) there is some
obvious overlap.
 The algebras are all not isomorphic to those listed in Theorem 10 and also not isomorphic to those listed
in Theorem 9, cases (iii) to (vi).
\end{theorem}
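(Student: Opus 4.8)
The plan is to pass to the underlying $\sts$-module structure throughout and let the Theorem of Krull--Schmidt do the separating. As recorded at the start of this section, $\h_3(\comp)$ has underlying module $\sts^3\oplus\comp^3$, so everything reduces to determining $\comp$ as an $\sts$-module and then forming $\sts^3\oplus\comp^3$. Since $\comp$ is a Cayley--Dickson doubling of $\D=\E nd_X(\E)$, one has $\comp\cong\D\oplus\Pe$ as an $\sts$-module, with $\Pe$ the doubling module of rank $4$; thus I first need the module $\D$ and then the list of admissible $\Pe$.

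First I would compute $\D=\E nd_X(\E)$. For $\E\in\Omega(2,1)$ there is a splitting $\E nd_X(\E)\cong\sts\oplus\text{ad}(\E)$ into the scalar line and the trace-free part $\text{ad}(\E)$, a bundle of rank $3$ and degree $0$. Combining Atiyah's classification of bundles on $X$ with the hypothesis that $X$ has type I, so that $_2\Pic(X)=\{\sts,\lb_1,\lb_2,\lb_3\}$, this trace-free part splits completely as $\text{ad}(\E)\cong\lb_1\oplus\lb_2\oplus\lb_3$. Hence $\D\cong\sts\oplus\lb_1\oplus\lb_2\oplus\lb_3$ as an $\sts$-module.

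Next I would invoke [Pu1, Proposition 4.1] to enumerate the doubling modules $\Pe$ over a type I curve. They occur in three families: $\Pe\cong\D$; $\Pe\cong\lb\otimes\E\oplus\lb^\vee\otimes\E^\vee$ for an arbitrary line bundle $\lb$; and $\Pe\cong tr_{l/k}(\N\otimes(\E^\vee\otimes\mathcal{O}_{X_l}))$ for a quadratic extension $l/k$ and a line bundle $\N$ over $X_l$, not defined over $X$, not self-dual, with $^\sigma\N\cong\N\cong\det\E\otimes\mathcal{O}_{X_l}$ (these last conditions being exactly what makes $\Pe$ admissible and its transfer indecomposable over $X$). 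Writing $\comp=\D\oplus\Pe$ and forming $\sts^3\oplus\comp^3$ in each of the three families produces precisely the module structures (i), (ii) and (iii).

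Finally I would prove the non-isomorphism statements by comparing, via Krull--Schmidt, the multisets of indecomposable summands over $X$ (by rank, and where needed degree, torsion order and multiplicity). Case (iii) contains the rank-$4$ indecomposable $tr_{l/k}(\N\otimes(\E^\vee\otimes\mathcal{O}_{X_l}))$, impossible in (i) (summands of rank $1$) or (ii) (summands of rank $\le 2$); this separates (iii) from (i) and (ii). No summand in (i)--(iii) has rank $3$, whereas every algebra of Theorem 10 carries an absolutely indecomposable $\F_3$-summand of rank $3$, and the algebras of Theorem 9 (v), (vi) carry a rank-$3$ summand ($\N_i\otimes\F_3$, respectively a cubic transfer $tr_{l/k}(\N)$); hence none of (i)--(iii) is isomorphic to an algebra of Theorem 10 or of Theorem 9 (v), (vi). For Theorem 9 (iii), (iv) I would use that there the trivial bundle $\sts$ occurs with multiplicity at least $9$, whereas in cases (ii) and (iii) it occurs with multiplicity $6$; and case (i), although it also has $\sts$-multiplicity $9$, consists solely of line bundles while the algebras of Theorem 9 (iii), (iv) contain a rank-$2$ summand ($\E\in\Omega(2,d)$, respectively a quadratic transfer). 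The main obstacle is precisely this bookkeeping: checking that the transfer in (iii) stays indecomposable of rank $4$ under the stated conditions on $\N$, and carrying out the multiplicity and rank comparisons carefully enough to exclude every accidental coincidence, while preserving the intended overlaps inside (i), (ii) and with the split algebras of Theorem 9 (i), (ii).
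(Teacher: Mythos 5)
Your proposal follows essentially the same route as the paper, which states Theorem 11 as a direct consequence of [Pu1, Proposition 4.1] without writing out the computation: you decompose $\comp\cong\E nd_X(\E)\oplus\Pe$, use $\E nd_X(\E)\cong\sts\oplus\lb_1\oplus\lb_2\oplus\lb_3$ in type I together with the three families of doubling modules from [Pu1, Proposition 4.1], form $\sts^3\oplus\comp^3$, and separate the resulting algebras by comparing indecomposable summands via Krull--Schmidt, exactly as intended. The only point left implicit in both your write-up and the paper is the indecomposability of the rank-$4$ transfer in case (iii), which rests on the conditions imposed on $\N$ and is inherited from [Pu1].
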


\begin{theorem} Let $X$ have type II. Then the underlying $\sts$-module structure of
the Albert algebra $\h_3(\comp)$ is given by one of the following:\\
 (i) $$\sts^9\oplus [ \lb_1^2\oplus tr_{l_2/k}(\lb_2)^2]^3,$$
\noindent (ii) $$\sts^9\oplus [ \lb_1\oplus tr_{l_2/k}(\lb_2)\oplus \lb\otimes\E\oplus \lb^\vee\otimes\E^\vee]^3,$$
for any line bundle $\lb$ over $X$,
\\ (iii) $$\sts^9\oplus [ \lb_1\oplus tr_{l_2/k}(\lb_2)\oplus tr_{l/k}(\N\otimes( \E^\vee\otimes\mathcal{O}_{X_{l_2}}))]^3.$$
Here, $l/k$ is a quadratic field extension with ${\rm Gal}(l/k)=\langle\sigma\rangle$ and $\N$ a line bundle over
$X_l=X\times_kl$ not defined over $X$, which is not self-dual and satisfies $\N\cong ^\sigma\N\cong ({\rm det}\,\E
\otimes\mathcal{O}_{X_l})$.
\\ The Albert algebras in cases (i) to (iii) are mutually non-isomorphic.
 The algebras are all not isomorphic to those listed in Theorems 10 and 11.
\end{theorem}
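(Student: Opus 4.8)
The plan is to reduce everything to the module structure of $\comp$ via the canonical $\sts$-module identification $\h_3(\comp)\cong\sts^3\oplus\comp^3$. Thus I would first determine all possible underlying $\sts$-module structures of a Cayley-Dickson doubling $\comp={\rm Cay}(\D,\Pe,N_\Pe)=\D\oplus\Pe$ of the quaternion algebra $\D=\E nd_X(\E)$, with $\E\in\Omega(2,1)$ absolutely indecomposable and $X$ of type II, and then substitute each into $\sts^3\oplus(\D\oplus\Pe)^3$. Here $\Pe$ is a rank-one $\D$-module, hence of $\sts$-rank $4$, so $\comp$ splits as the fixed quaternion summand $\D$ plus the variable doubling summand $\Pe$.

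First I would pin down the quaternion part. Using Atiyah's classification of bundles on $X$ together with the type-II description of $_2\Pic(X)$ --- a single $k$-rational point of order $2$ producing the line bundle $\lb_1$, while the remaining conjugate pair of order-$2$ points is recorded by the rank-$2$ transfer bundle $tr_{l_2/k}(\lb_2)$ --- one finds $\D=\E nd_X(\E)\cong\sts\oplus\lb_1\oplus tr_{l_2/k}(\lb_2)$ as an $\sts$-module. The entire remaining freedom then sits in $\Pe$, whose admissible shapes over a type-II curve are exactly the three produced by [Pu1, Proposition 4.1]: the split form $\Pe\cong\sts\oplus\lb_1\oplus tr_{l_2/k}(\lb_2)$, the form $\Pe\cong\lb\otimes\E\oplus\lb^\vee\otimes\E^\vee$ for a line bundle $\lb$, and the transfer form $\Pe\cong tr_{l/k}(\N\otimes\E^\vee)$ subject to the Galois-descent and self-duality constraints $\N\cong\,^\sigma\N\cong({\rm det}\,\E\otimes\mathcal{O}_{X_l})$ with $\N$ not self-dual and not defined over $X$. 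Feeding these into $\sts^3\oplus(\D\oplus\Pe)^3$ and collecting summands gives the three module structures (i), (ii) and (iii).

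The non-isomorphism statements I would settle with the Theorem of Krull-Schmidt, valid over an elliptic curve, which makes the multiset of indecomposable summands of the underlying $\sts$-module an isomorphism invariant; since non-isomorphic modules force non-isomorphic algebras, it suffices to separate the three lists by their indecomposable constituents. Case (i) consists only of line bundles and the transfer bundle $tr_{l_2/k}(\lb_2)$; case (ii) additionally contains the absolutely indecomposable rank-$2$ bundle $\lb\otimes\E$ and its dual; case (iii) contains the rank-$2$ transfer bundle $tr_{l/k}(\N\otimes\E^\vee)$, which the constraints on $\N$ prevent from being isomorphic to any $\lb\otimes\E$. The same summand count separates these algebras from those of Theorem 10, all of whose cases carry the rank-$3$ absolutely indecomposable bundle $\F_3$ absent here, and from the type-I list of Theorem 11, whose distinguishing order-$2$ line bundles $\lb_2,\lb_3$ are here fused into the single transfer summand $tr_{l_2/k}(\lb_2)$.

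The main obstacle I anticipate is case (iii). One must first check that the conditions $\N\cong\,^\sigma\N\cong({\rm det}\,\E\otimes\mathcal{O}_{X_l})$ together with non-self-duality are simultaneously satisfiable on a type-II curve, so that the case is nonvacuous, and then --- more delicately --- that the resulting rank-$2$ transfer bundle is genuinely distinct from every $\lb\otimes\E$ occurring in (ii). Since both are absolutely indecomposable of rank $2$ with the same determinant, a naive count does not suffice: I would distinguish them by base-changing to $X_l$ and comparing the Galois action on the two factors, the bundle of (iii) being non-split under descent while $\lb\otimes\E$ is defined over $X$. The remaining verification --- matching ranks, determinants and the precise exponents in (i)--(iii) --- is routine bookkeeping.
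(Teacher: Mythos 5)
Your overall route is the paper's own: the paper offers no separate argument for this theorem beyond the identification $\h_3(\comp)\cong\sts^3\oplus\comp^3$, the citation of [Pu1, Proposition 4.1] for the possible module structures of the doubling $\comp=\D\oplus\Pe$ of $\D=\E nd_X(\E)$, and an implicit appeal to Krull--Schmidt for the non-isomorphism claims. Two points in your write-up are nevertheless wrong as stated. First, the summand $tr_{l/k}(\N\otimes(\E^\vee\otimes\mathcal{O}_{X_l}))$ in case (iii) is not a ``rank-$2$ absolutely indecomposable bundle'': $\N\otimes(\E^\vee\otimes\mathcal{O}_{X_l})$ already has rank $2$ over $X_l$, so its transfer along the quadratic extension $l/k$ has rank $4$ over $X$, and it is indecomposable but not absolutely so (over $\overline{X}$ it splits into two rank-$2$ pieces). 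The delicate comparison you set up between this bundle and $\lb\otimes\E$ is therefore beside the point --- Krull--Schmidt separates (ii) from (iii) at once, since the doubling part of (ii) consists of two rank-$2$ indecomposables while that of (iii) is a single rank-$4$ indecomposable. Your proposed remedy (base change to $X_l$ and comparison of descent data) is the right instinct but is addressed to a nonexistent difficulty.

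Second, your own formula $\sts^3\oplus(\D\oplus\Pe)^3$ with $\D\cong\sts\oplus\lb_1\oplus tr_{l_2/k}(\lb_2)$ yields $\sts^6\oplus[\lb_1\oplus tr_{l_2/k}(\lb_2)\oplus\Pe]^3$ in cases (ii) and (iii), not the $\sts^9$ printed in the statement; as printed, those cases have total rank $9+3\cdot 7=30$ rather than $27$ (compare the corresponding cases of Theorems 11 and 13, which carry $\sts^6$). You should not assert that ``collecting summands gives the three module structures (i), (ii) and (iii)'' verbatim: your computation is the correct one and exposes a misprint in the target statement, and you ought to say so rather than paper over the discrepancy. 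Everything else --- the identification of $\E nd_X(\E)$ as $\sts\oplus\lb_1\oplus tr_{l_2/k}(\lb_2)$ in type II, the three admissible shapes of $\Pe$ from [Pu1, Proposition 4.1], and the Krull--Schmidt separation from Theorems 10 and 11 via the presence or absence of $\F_3$ and of the individual order-$2$ line bundles --- matches the paper.
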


\begin{theorem} Let $X$ have type III. Then the underlying $\sts$-module structure of
the Albert algebra $\h_3(\comp)$ is given by one of the following:\\
(i)   $$\sts^9\oplus [ tr_{l_1/k}(\lb_1)\oplus  tr_{l_1/k}(\lb_1)]^3,$$
\noindent (ii) $$\sts^6\oplus [ tr_{l_1/k}(\lb_1)\oplus \lb\otimes\E\oplus \lb^\vee\otimes\E^\vee]^3,$$
for any line bundle $\lb$ over $X$,
\\ (iii) $$\sts^6\oplus [ tr_{l_1/k}(\lb_1)\oplus tr_{l/k}(\N\otimes( \E^\vee\otimes\mathcal{O}_{X_{l_2}}))]^3.$$
Here, $l/k$ is a quadratic field extension with ${\rm Gal}(l/k)=\langle\sigma\rangle$ and $\N$ a line bundle over
$X_l=X\times_kl$ not defined over $X$, which is not self-dual and satisfies $\N\cong ^\sigma\N\cong ({\rm det}\,\E
\otimes\mathcal{O}_{X_l})$.
\\ The Albert algebras in cases (i) to (iii) are mutually non-isomorphic.
 The algebras are all not isomorphic to those listed in Theorems 10 and 11.
\end{theorem}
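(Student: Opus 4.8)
The plan is to follow the proof pattern that has already been used for Theorems 11 and 12, adapting it to the type III situation. First I would record the structural data: $X$ has type III, so $_2\Pic(X)=\{\sts\}$ has no nontrivial $k$-rational $2$-torsion, and the relevant line bundles of order $2$ are visible only after passing to $\ell_1=k(a_1)$. The quaternion algebra in play is $\E nd_X(\E)$ for an absolutely indecomposable $\E\in\Omega(2,1)$, and $\comp$ is a Cayley-Dickson doubling of it. The underlying $\sts$-module structure of $\h_3(\comp)$ is $\sts^3\oplus\comp^3$, so the whole computation reduces to determining the possible module structures of $\comp$ itself. For this I would invoke [Pu1, Proposition 4.1], exactly as in the earlier theorems, which classifies the Cayley-Dickson doublings ${\rm Cay}(\D,\Pe,N_\Pe)$ of $\D=\E nd_X(\E)$ in terms of the choices of $(\Pe,N_\Pe)$, and specializes the shape of $\Pe$ according to the type of $X$.

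Next I would carry out the type-III specialization of the doubling datum. The norm-one condition on the doubling module forces $N_\D(\Pe)\cong\sts$, and over a type-III curve the $2$-torsion available to build $\Pe$ lives on $X_{\ell_1}$; the nontrivial cases therefore come either from decomposable choices (producing $\lb\otimes\E$ and $\lb^\vee\otimes\E^\vee$ as in case (ii)) or from a bundle $\N$ over a quadratic extension $X_l$ that is not defined over $X$, pushed forward by $tr_{l/k}$ (case (iii)). The split part contributes the $tr_{l_1/k}(\lb_1)$ summands that distinguish type III from the other types, giving the leading term in all three cases. I would then read off each module structure of $\comp$ and tensor the bookkeeping $\sts^3\oplus\comp^3$ through, matching the three displayed formulas. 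The constraints $\N\cong{}^\sigma\N\cong({\rm det}\,\E\otimes\mathcal{O}_{X_l})$ in case (iii) arise precisely as the conditions for $\N\otimes(\E^\vee\otimes\mathcal{O}_{X_{l_2}})$ to descend to a doubling datum over $X$, again following [Pu1].

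Finally I would establish the non-isomorphism assertions. The Theorem of Krull-Schmidt holds over elliptic curves, so two Albert algebras with non-isomorphic underlying $\sts$-module structures cannot be isomorphic; the mutual non-isomorphism of cases (i)--(iii) then follows by comparing the indecomposable summands, in particular the ranks and the presence or absence of the $tr_{l/k}$ summands, which are indecomposable of rank $2$ and absolutely indecomposable or not according to whether $\N$ descends. Distinguishing these algebras from those in Theorems 10 and 11 is similarly a matter of noting that the type-III leading term $tr_{l_1/k}(\lb_1)$, which is indecomposable of rank $2$ and not defined over $X$, cannot appear in the type-II ($\lb_1\oplus tr_{l_2/k}(\lb_2)$) or $\F_2$-based module structures of those theorems. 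The main obstacle I anticipate is the careful case analysis ensuring that the descent conditions on $\N$ are both necessary and sufficient, and that no type-III doubling datum has been overlooked; this is exactly the point where one must appeal precisely to the classification in [Pu1, Proposition 4.1] rather than to a direct construction, since a self-contained verification would require reproving that classification.
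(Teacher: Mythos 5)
Your proposal matches the paper's approach: the paper gives no separate proof of this theorem, deriving it (together with Theorems 11 and 12) directly from the classification of Cayley--Dickson doublings of $\E nd_X(\E)$ in [Pu1, Proposition 4.1], reading off the module structure from $\h_3(\comp)\cong\sts^3\oplus\comp^3$ and invoking Krull--Schmidt for the non-isomorphism claims, exactly as you do. One small correction: in type III the extension $\ell_1=k(a_1)$ is cubic, so $tr_{l_1/k}(\lb_1)$ is indecomposable of rank $3$ (not $2$), though this does not affect your Krull--Schmidt comparison with Theorems 10 and 11.
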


Compared to the results in Example 1, this is a first indication that there will be many more Albert algebras over elliptic curves than over
curves of genus one.

\section{Albert algebras over elliptic curves obtained by a first Tits construction}

 For every locally free $\sts$-module $\mathcal{E}$ of constant rank 3, $\A=\E nd(\E)$ is an Azumaya algebra of rank 9.
We look at some examples, which follow directly from  [At, Theorem 8, Lemma 22], (iii) uses [AEJ3, 2.2]:

\begin{lemma} (i)  If $\E$ is absolutely indecomposable and $\E=\M\otimes\F_3$, $\M\in{\rm Pic}\,X$ a line bundle,
 then $$\A\cong \sts\oplus \F_3\oplus \F_5$$ as $\sts$-module. \\
(ii)  If $\E$ is absolutely indecomposable and $\E\in \Omega(3,d)$, ${\rm gcd}(3,d)=1$, then
$$\A\cong \sts\oplus \N_1\oplus\dots\oplus \N_m\oplus tr_{k_{1}/k}(\N_{j}')\dots\oplus tr_{k_{j}/k}(\N_j') $$
as $\sts$-module, for all line bundles $\N_1,\dots,\N_m$ of order 3 over $X$ and line bundles $\N_{i}'$ of order 3
 defined over $X_i=X\times_k k_i$ for $k_i/k$ some field extension (and not defined over $X$), $0\leq i\leq j$,
$m$ depending on $X$ and $j$ depending on $m$. \\
(iii) If $\E$ is indecomposable, but not absolutely so, then there is a  a suitable cubic field extension $l$ of $k$
and a line bundle $\N$ over $Y=X\times_k l$,  which is not defined over $X$, such that $\E=tr_{l/k}(\N)$ and $\A\cong
 tr_{l/k}(\N)\otimes tr_{l/k}(\N^{\vee})$. If $l/k$ is  Galois, then
$$\A\cong \sts^3\oplus tr_{l/k}(\N\otimes ^{\sigma_1} \N^\vee)\oplus tr_{l/k}(\N\otimes ^{\sigma_2} \N^\vee) $$
 where ${\rm Gal}(l/k)=\{id,\sigma_1,\sigma_2\}$.
 \\
(iv) If $\E=\M_1\oplus \M_2\otimes\F_2$, $\M_i\in{\rm Pic}\,X$ line bundles, then
 \begin{align*}
 \A
 & \cong
\left [\begin {array}{cc}
\sts & \E nd (\M_1, \M_2\otimes \F_2)  \\
\E nd ( \M_2\otimes \F_2,\M_1) &  \E nd ( \F_2) \\
\end {array}\right ]
\\ &
\cong \sts\oplus
\M_1\otimes \M_2^\vee\otimes\F_2\oplus \M_1^\vee\otimes \M_2\otimes\F_2\oplus \sts\oplus\F_3
 \end{align*}
 as $\sts$-module.
\\
(v) If $\E$ decomposes into the direct sum of a line bundle and an indecomposable (but not absolutely
indecomposable) vector bundle of rank 2, then there is a quadratic field extension $l/k$  with ${\rm Gal}(l/k)=\langle\sigma
\rangle$
and a line bundle $\N$ over $X_l=X\times_k l$, which is not defined over $X$, such that
 $\E=\M\oplus tr_{l/k}(\N)$ and
 \[ \A\cong
\left [\begin {array}{cc}
\sts & \h om (\M, tr_{l/k}(\N))  \\
\h om (tr_{l/k}(\N),\M) &  \E nd (tr_{l/k}(\N)) \\
\end {array}\right ]
 \]
 with
 $$\E nd (tr_{l/k}(\N))\cong  \sts^2\oplus tr_{l/k}(\N\otimes \,^{\sigma} \N^\vee).$$
 \\ (vi) If $\E$ is the direct sum of line bundles $\E=\M_1\oplus\M_2\oplus\M_3$, then
\[\A\cong
\left [\begin {array}{ccc}
\sts & \h om (\M_1, \M_2)  & \h om (\M_1,\M_3)\\
\h om (\M_2,\M_1) &  \sts & \h om (\M_2,\M_3) \\
\h om (\M_3,\M_1) & \h om (\M_3,\M_2) &  \sts \\
\end {array}\right ]
 .\]

\end{lemma}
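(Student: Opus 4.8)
The plan is to compute $\A=\E nd(\E)\cong\E\otimes\E^\vee$ directly from the decomposition of $\E$ given by Atiyah's classification, using two facts. First, each $\F_r$ is self-dual, $\F_r^\vee\cong\F_r$: both are indecomposable of rank $r$ and degree $0$ with a one-dimensional space of global sections, the latter since $h^0(\F_r^\vee)=h^1(\F_r)=h^0(\F_r)=1$ by Serre duality and Riemann--Roch on the elliptic curve $X$. Second, Atiyah's tensor-product rule [At, Theorem 8] gives, in characteristic zero, $\F_r\otimes\F_s\cong\F_{|r-s|+1}\oplus\F_{|r-s|+3}\oplus\dots\oplus\F_{r+s-1}$; in particular $\F_2\otimes\F_2\cong\sts\oplus\F_3$ and $\F_3\otimes\F_3\cong\sts\oplus\F_3\oplus\F_5$, where $\F_1=\sts$.

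For the decomposable cases (iv), (v) and (vi) I would expand $\E\otimes\E^\vee$ over $\E=\bigoplus_i\E_i$ into the block matrix with diagonal entries $\E nd(\E_i)$ and off-diagonal entries $\h om(\E_i,\E_j)\cong\E_i^\vee\otimes\E_j$. In (vi) each $\E_i$ is a line bundle, so the diagonal blocks are $\sts$ and the off-diagonal blocks are the $\h om(\M_i,\M_j)$, giving the displayed $3\times3$ matrix. In (iv) the only nontrivial diagonal block is $\E nd(\M_2\otimes\F_2)=\E nd(\F_2)\cong\sts\oplus\F_3$, while the off-diagonal blocks are $\M_1\otimes\M_2^\vee\otimes\F_2$ and its dual (using $\F_2^\vee\cong\F_2$); this assembles to the asserted form. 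Case (v) follows in the same way once the diagonal block $\E nd(tr_{l/k}(\N))$ is computed.

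The remaining cases reduce to two computations. For (i), $\E=\M\otimes\F_3$ gives $\A\cong\F_3\otimes\F_3^\vee\cong\F_3\otimes\F_3\cong\sts\oplus\F_3\oplus\F_5$ by the rule above. For the endomorphism algebra of a trace bundle $\E=tr_{l/k}(\N)=f_*\N$, with $f\colon Y=X\times_kl\to X$, the tool is the projection formula combined with Galois descent: $f_*\N\otimes f_*\N^\vee\cong f_*\bigl(\N\otimes f^*f_*\N^\vee\bigr)$ and $f^*f_*\N^\vee\cong\bigoplus_{\tau\in{\rm Gal}(l/k)}{}^\tau\N^\vee$ for $l/k$ Galois [AEJ3, 2.2]. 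Since the identity term contributes $\N\otimes\N^\vee\cong\sts_Y$ with $f_*\sts_Y\cong\sts^{[l:k]}$ (a constant field extension), one obtains $\E nd(tr_{l/k}(\N))\cong\sts^{[l:k]}\oplus\bigoplus_{\tau\neq{\rm id}}tr_{l/k}(\N\otimes{}^\tau\N^\vee)$. This is precisely the quadratic formula needed in (v) and the cubic formula in (iii).

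Case (ii) is where the genuine work lies, and I expect it to be the main obstacle. Over $\overline{X}$ the algebra $\overline{\A}$ is a direct sum of nine line bundles of order dividing $3$: tensoring an absolutely indecomposable $\E\in\Omega(3,d)$ by a line bundle of order $3$ preserves its rank, degree and determinant and so fixes $\E$ up to isomorphism (as in Remark 3), whence each such line bundle embeds into $\overline{\A}$. Descending to $k$ then amounts to following the Galois action on ${}_3\Pic(\overline{X})$, whose structure is given by [At, Lemma 22]: the order-$3$ bundles already rational over $X$ supply the summands $\N_1,\dots,\N_m$, while each Galois orbit of the remaining ones assembles into a trace bundle $tr_{k_i/k}(\N_i')$ over the field $k_i$ over which that orbit becomes rational. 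The delicate steps are ruling out any other indecomposable summand and checking that the orbit bookkeeping reproduces exactly the stated list; these are not mechanical and rely on the explicit description of ${}_3\Pic$ and its Galois action.
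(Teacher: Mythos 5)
Your proposal is correct and follows exactly the route the paper intends: the paper gives no proof of this lemma beyond the remark that it ``follows directly from [At, Theorem 8, Lemma 22], (iii) uses [AEJ3, 2.2]'', and your argument is precisely an expansion of those citations (self-duality of $\F_r$ and the Clebsch--Gordan rule for (i) and the diagonal blocks, the block decomposition of $\E nd(\bigoplus\E_i)$ for (iv)--(vi), the projection formula with $f^*f_*\cong\bigoplus_\tau{}^\tau(-)$ for the trace bundles in (iii) and (v), and the $_3\Pic$ analysis of [At, Lemma 22] together with Galois descent for (ii)). You are also right to flag (ii) as the only case requiring genuine care, and your honesty about the orbit bookkeeping there is appropriate rather than a defect.
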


For an Azumaya algebra $\A=\E nd(\E)$, the locally free left $\A$-modules $\Pe$ with $N_\A(\Pe)\cong\sts$
are of the kind
$$\Pe\cong \E\otimes\F^\vee$$
with $\F$ locally free of rank 3 and ${\rm det}\F\cong{\rm det}\E$. The cubic form $N$ and the adjoint $\sharp$
can be computed correspondingly [Pu2, Example 9].

\begin{example} We begin by considering the setup in Lemma 9 (i):
 Let  $\M\in{\rm Pic}\,X$ be a line bundle of degree 0 and put
$\E=\M\otimes\F_3$.
(If the degree of $\M$ is nonzero, the argument is more complicated, but the module structure is the same.)
Then  ${\rm det}\, \E\cong \M^3$.

 Let $\F$ be another locally free $\sts$-module of constant rank 3 such that
$ {\rm det}\, \F\cong \M^3$. We have the following possibilities for $\F$:
\begin{enumerate}
\item $\F$ is absolutely indecomposable, for instance, $\F=\N\otimes\F_3$ for some line bundle $\N $ of degree 0 such
that $\N^3\cong\M^3$.
\item If $\F$ is indecomposable, but not absolutely indecomposable, then there is a cubic field extension $l/k$ such that
$\F=tr_{l/k}(\lb)$ for some line bundle $\lb$ of degree 0 over $X_l$ such that $\overline{\lb}\otimes
\overline{^{\sigma_2}\lb}\otimes\overline{^{\sigma_3}\lb}\cong\overline{\M^3}$.
\item $\F= \N\oplus \G$ for some line bundle $\N$, and $ \G $ is absolutely indecomposable of rank 2.
\item  $\F= \N\oplus \G$ for some line bundle $\N$ and $ \G $ is indecomposable, but not absolutely indecomposable,
 of rank 2. Then there is a quadratic field extension $l/k$ such that $\F= \N\oplus tr_{l/k}(\lb)$
  for some line bundle $\lb$ over $X_l$. In
  particular, $\overline{\lb}\otimes \overline{^{\sigma}\lb}\cong\overline{\M^3}$.
\item If $\F$ is the direct sum of line bundles $\M_i$, then $\F= \M_1\oplus\M_2\oplus \M_1^\vee\otimes \M_2^\vee
\otimes\M^3$.
\end{enumerate}

Let $\Pe=\h om_X(\F,\E)=\E\otimes\F^\vee$:

\smallskip
\noindent
(1) If $\F=\N\otimes\F_3$ is as in (1),
then the $\sts$-module structure of the Albert algebra $\mathcal{J}(\A,\Pe,N)$ is isomorphic to

$\begin{array}{l}
 \mathcal{O}_X\oplus \F_3\oplus \F_5\oplus
  \M\otimes \N^\vee \oplus  \M\otimes \N^\vee\otimes\F_3\oplus
   \M\otimes \N^\vee \otimes\F_5 \oplus\\
 \M^\vee\otimes \N\oplus  \M^\vee\otimes \N\otimes \F_3 \oplus  \M^\vee\otimes \N\otimes \F_5.
\end{array}$

In particular, we may choose $\M$ and $\N$ both of order 3.
\\(2) If $\F$ is as in (2), then
 the $\sts$-module structure of  $\mathcal{J}(\A,\Pe,N)$ is isomorphic to
$$ \mathcal{O}_X\oplus \F_3\oplus \F_5\oplus  tr_{l/k}(\M\otimes\F_3\otimes\lb^\vee)\oplus
 tr_{l/k}(\M^\vee\otimes\F_3\otimes\lb),$$
 where $\M\otimes\F_3\otimes\lb^\vee$ and $\M^\vee\otimes\F_3\otimes\lb$ are viewed as bundles over $X_l$.
 Note that the $\sts$-modules $\Pe\cong tr_{l/k}(\M\otimes\F_3\otimes\lb^\vee)$ and $\Pe^\vee\cong
 tr_{l/k}(\M^\vee\otimes\F_3\otimes\lb)$ are indecomposable
 of rank 9.
\\(3) If $\F= \N\oplus \G$ as in (3), then
 the $\sts$-module structure of  $\mathcal{J}(\A,\Pe,N)$ is isomorphic to
$$ \sts\oplus \F_3\oplus \F_5\oplus\M\otimes\N^\vee\otimes \F_3\oplus \M\otimes \F_3\otimes\G^\vee
\oplus\M^\vee\otimes\N\otimes \F_3\oplus \M^\vee\otimes \F_3\otimes\G.$$
In particular, we may choose $\M=\sts$ in $\E$ and $\F=\N_i\oplus\N_i\otimes\F_2$ and get
$$ \sts\oplus \F_3\oplus \F_5\oplus\N_i^\vee\otimes \F_3\oplus  \F_2\oplus\F_4
\oplus\N_i\otimes \F_3\oplus  \F_2\oplus\F_4,$$
since $\F_3\otimes\F_2\cong\F_2\oplus\F_4.$
\\(4) If  $\F= \N\oplus tr_{l/k}(\lb)$ as in (4),
then the $\sts$-module structure of  $\mathcal{J}(\A,\Pe,N)$ is isomorphic to
$$ \mathcal{O}_X\oplus \F_3\oplus \F_5\oplus\M\otimes\N^\vee\otimes\F_3\oplus
tr_{l/k}(\M\otimes\F_3\otimes\lb^\vee) \oplus
\M^\vee\otimes\N\otimes\F_3\oplus tr_{l/k}(\M^\vee\otimes\F_3\otimes\lb).$$
 Note that the $\sts$-modules $ tr_{l/k}(\M\otimes\F_3\otimes\lb^\vee)$ and $
 tr_{l/k}(\M^\vee\otimes\F_3\otimes\lb)$ are indecomposable
 of rank 6.
\\
(5) If $\F$ is as in (5),
then the $\sts$-module structure of  $\mathcal{J}(\A,\Pe,N)$ is isomorphic to

$\begin{array}{l}
 \mathcal{O}_X\oplus \F_3\oplus \F_5\\
 \oplus
\M\otimes\M_1^\vee\otimes\F_3\oplus \M\otimes\M_2^\vee\otimes\F_3\oplus
\M\otimes\M_1\otimes\M_2\otimes\M^{3\vee}\otimes\F_3
\\
\oplus
\M^\vee\otimes\M_1\otimes\F_3\oplus \M^\vee\otimes\M_2\otimes\F_3\oplus
\M^\vee\otimes\M_1^\vee\otimes\M_2^\vee\otimes\M^{3}\otimes\F_3 .
\end{array}$

The Albert algebras obtained in the above cases are mutually non-isomorphic.
They are also not isomorphic to those in Theorems  10, 11, 12, 13 or 14.
\end{example}

\begin{example}
Let $l/k$ be a cubic Galois field extension with ${\rm Gal}(l/k)=\{id,\sigma_1,\sigma_2\}$  and
$\E=tr_{l/k}(\N)$ for a line bundle $\N$ which is not defined over $X$, then
$$\A\cong \sts^3\oplus tr_{l/k}(\N\otimes \,^{\sigma_1} \N)\oplus tr_{l/k}(\N\otimes \,^{\sigma_2} \N) $$
as in Lemma 9 (iii).
 Let $\F$ be another locally free $\sts$-module of constant rank 3 such that
$ {\rm det}\, \F\cong  {\rm det}\, \E $. We have the following possibilities for $\F$:

\begin{enumerate}
\item $\F\in \Omega(3,d)$ is absolutely indecomposable, $d={\rm deg}\,\E$.
(If $\N$ is a line bundle of degree 0 then $\F$
must have degree 0, hence $\F=\lb\otimes\F_3$ for some line bundle $\lb $ of degree 0 such
that $\lb^3\cong {\rm det}\, \E $, e.g., we might take $\E=tr_{l/k}(\N_i)$ for some $\N_i$ not defined over $X$, if that exists.)

\item If $\F$ is indecomposable, but not absolutely indecomposable, then there is a cubic field extension $l'/k$ such that
$\F=tr_{l'/k}(\lb)$ for some line bundle $\lb$  over $X_{l'}$, which is not defined over $X$, such that $\overline{\lb}\otimes
\overline{^{\omega_2}\lb}\otimes\overline{^{\omega_3}\lb}\cong {\rm det}\,\overline{ \E }$.
\item $\F= \M\oplus \G$ for some line bundle $\M$ and $ \G $ is absolutely indecomposable of rank 2.
\item $\F= \M\oplus \G$ for some line bundle $\M$ and $ \G $ is indecomposable, but not absolutely indecomposable,
 of rank 2. Then there is a quadratic field extension $k'/k$ such that $\F= \M\oplus tr_{k'/k}(\lb)$
  for some line bundle $\lb$ over $X'=X\times_kk'$. In
  particular, $\overline{\lb}\otimes \overline{^{\sigma}\lb}\cong {\rm det}\,\overline{ \E }$.
\item If $\F$ is the direct sum of line bundles then $\F= \M_1\oplus\M_2\oplus \M_1^\vee\otimes \M_2^\vee
\otimes {\rm det}\, \E$.
\end{enumerate}

Let $\Pe=\h om_X(\F,\E)=\E\otimes\F^\vee$.

\smallskip
\noindent
(1) Suppose that $\N$ has degree 0.
Then $\F=\lb\otimes\F_3$ for some line bundle $\lb $ of degree 0 such that $\overline{\lb}^3\cong
\overline{\N}\otimes\overline{^{\sigma_1}\N}\otimes\overline{^{\sigma_2}\N}$.
The $\sts$-module structure of $\mathcal{J}(\A,\Pe,N)$ is isomorphic to
\begin{align*}
& \sts^3\oplus tr_{l/k}(\N\otimes \,^{\sigma_1} \N)\oplus tr_{l/k}(\N\otimes \,^{\sigma_2} \N)
\\ &
\oplus tr_{l/k}(\N\otimes \lb^\vee\otimes\F_3)\oplus tr_{l/k}(\N^\vee\otimes \lb\otimes\F_3).
\end{align*}
Here, $\Pe\cong tr_{l/k}(\N\otimes \lb^\vee\otimes\F_3)$ and its dual are indecomposable of rank 9.
\\ (2) Let $\F=tr_{l'/k}(\lb)$ be as in (2).
The $\sts$-module structure of $\mathcal{J}(\A,\Pe,N)$ is given by
\begin{align*}
& \sts^3\oplus tr_{l/k}(\N\otimes \,^{\sigma_1} \N)\oplus tr_{l/k}(\N\otimes \,^{\sigma_2} \N)
\\ &
\oplus tr_{l/k}(\N)\otimes tr_{l'/k}(\lb^\vee) \oplus tr_{l/k}(\N^\vee)\otimes tr_{l'/k}(\lb).
\end{align*}

\noindent (3) Let $\F= \M\oplus \G$ be as in (3).
The $\sts$-module structure of  $\mathcal{J}(\A,\Pe,N)$ is given by
\begin{align*}
& \sts^3\oplus tr_{l/k}(\N\otimes \,^{\sigma_1} \N)\oplus tr_{l/k}(\N\otimes \,^{\sigma_2} \N)
\\ &
\oplus tr_{l/k}(\N\otimes\M^\vee) \oplus tr_{l/k}(\N\otimes\G^\vee)\oplus  tr_{l/k}(\N^\vee\otimes\M)
\oplus tr_{l/k}(\N^\vee\otimes\G).
\end{align*}
The $\sts$-module $tr_{l/k}(\N\otimes\G^\vee)$ and its dual $tr_{l/k}(\N^\vee\otimes\G)$ is indecomposable of rank 6.

\noindent (4) Let $\F= \M\oplus tr_{k'/k}(\lb)$ be as in (4).
Then the $\sts$-module structure of  $\mathcal{J}(\A,\Pe,N)$ is given by
\begin{align*}
& \sts^3\oplus tr_{l/k}(\N\otimes \,^{\sigma_1} \N)\oplus tr_{l/k}(\N\otimes \,^{\sigma_2} \N)
\\ &
\oplus tr_{l/k}(\N\otimes\M^\vee) \oplus tr_{l/k}(\N\otimes  tr_{k'/k}(\lb^\vee))\oplus  tr_{l/k}(\N^\vee\otimes\M)
\oplus tr_{l/k}(\N^\vee\otimes  tr_{k'/k}(\lb)).
\end{align*}

\noindent (5) Let $\F= \M_1\oplus\M_2\oplus \M_1^\vee\otimes \M_2^\vee
\otimes {\rm det}\, \E$.
Then the $\sts$-module structure of  $\mathcal{J}(\A,\Pe,N)$ is given by
\begin{align*}
& \sts^3\oplus tr_{l/k}(\N\otimes \,^{\sigma_1} \N)\oplus tr_{l/k}(\N\otimes \,^{\sigma_2} \N)
\\ &
\oplus tr_{l/k}(\N\otimes\M_1^\vee) \oplus tr_{l/k}(\N\otimes\M_2^\vee) \oplus tr_{l/k}(\N\otimes\M_1\otimes M_2
\otimes {\rm det}\, \E^\vee)\oplus
\\ &
\oplus tr_{l/k}(\N^\vee\otimes\M_1) \oplus tr_{l/k}(\N^\vee\otimes\M_2) \oplus tr_{l/k}(\N^\vee\otimes\M_1^\vee\otimes
M_2^\vee\otimes {\rm det}\, \E).
\end{align*}
The Albert algebras obtained in the above cases are mutually non-isomorphic. They are also non-isomorphic to the ones described
in Example 5 and not isomorphic to those in Theorems  10, 11, 12, 13 or 14.
\end{example}

\begin{example} Let $\E=\M_1\oplus \M_2\otimes\F_2$ as in Lemma 9 (iv), with $\M_i\in{\rm Pic}\,X$ line bundles. Then
 \begin{align*}
 \A
 & \cong
\left [\begin {array}{cc}
\sts & \h om (\M_1, \M_2\otimes \F_2)  \\
\h om ( \M_2\otimes \F_2,\M_1) &  \E nd ( \F_2) \\
\end {array}\right ]
 \end{align*}
with  $\E nd ( \F_2)\cong  \sts\oplus\F_3$.
 Let $\F$ be another locally free $\sts$-module of constant rank 3 such that
$ {\rm det}\, \F\cong  {\rm det}\, \E $. We have the following possibilities for $\F$:

\begin{enumerate}
\item $\F\in \Omega(3,d)$ is absolutely indecomposable, $d={\rm deg}\,\E$.
\item If $\F$ is indecomposable, but not absolutely indecomposable, then there is a cubic field extension $l'/k$ such that
$\F=tr_{l'/k}(\lb)$ for some line bundle $\lb$  over $X_{l'}$ such that $\overline{\lb}\otimes
\overline{^{\omega_2}\lb}\otimes\overline{^{\omega_3}\lb}\cong {\rm det}\,\overline{ \E }$.
\item $\F= \M\oplus \G$ for some line bundle $\M$ and $ \G $ is absolutely indecomposable of rank 2.
\item $\F= \M\oplus \G$ for some line bundle $\M$ and $ \G $ is indecomposable, but not absolutely indecomposable,
 of rank 2. Then there is a quadratic field extension $k'/k$ such that $\F= \M\oplus tr_{k'/k}(\lb)$
  for some line bundle $\M$ over $X'=X\times_kk'$. In
  particular, $\overline{\lb}\otimes \overline{^{\sigma}\lb}\cong {\rm det}\,\overline{ \E }$.
\item If $\F$ is the direct sum of line bundles then $\F= \lb_1\oplus\lb_2\oplus \lb_1^\vee\otimes \lb_2^\vee
\otimes {\rm det}\, \E$.
\end{enumerate}

Let $\Pe=\h om_X(\F,\E)=\E\otimes\F^\vee$.

\smallskip
\noindent (1)  Suppose that $\E$ has degree 0. Then $\F$ must have degree 0, hence $\F\cong\lb\otimes\F_3$ for some line bundle $\lb $ of degree 0 such
that $\lb^3\cong {\rm det}\, \E $ (e.g., take $\E=\N_i\oplus\N_i\otimes\F_2$ for some $\N_i$ of order 3 over $X$).

The $\sts$-module structure of $\mathcal{J}(\A,\Pe,N)$ is given by
\begin{align*}
 \left [\begin {array}{cc}
\sts & \h om (\M_1, \M_2\otimes \F_2)  \\
\h om ( \M_2\otimes \F_2,\M_1) &  \E nd ( \F_2) \\
\end {array}\right ]
&\\ \oplus
\M_1\otimes\lb^\vee\otimes\F_3\oplus \M_2\otimes \lb^\vee\otimes\F_2\oplus \M_2\otimes \lb^\vee\otimes\F_4
&\\ \oplus
\M_1^\vee\otimes\lb\otimes\F_3\oplus \M_2^\vee\otimes \lb\otimes\F_2\oplus \M_2^\vee\otimes \lb\otimes\F_4.
\end{align*}

\noindent (2) Let $\F=tr_{l/k}(\lb)$ be as in (2).
Then the $\sts$-module structure of $\mathcal{J}(\A,\Pe,N)$ is given by
\begin{align*}
 \left [\begin {array}{cc}
\sts & \h om (\M_1, \M_2\otimes \F_2)  \\
\h om ( \M_2\otimes \F_2,\M_1) &  \E nd ( \F_2) \\
\end {array}\right ]
&\\ \oplus
tr_{l/k}(\M_1\otimes\lb^\vee)\oplus tr_{l/k}(\M_2\otimes\F_2\otimes\lb^\vee)
&\\ \oplus
tr_{l/k}(\M_1^\vee\otimes\lb)\oplus tr_{l/k}(\M_2^\vee\otimes\F_2\otimes\lb).
\end{align*}

\noindent (3) Let $\F= \M\oplus \G$ be as in (3), say we choose $\F=\E$ for example.
Then the $\sts$-module structure of $\mathcal{J}(\A,\Pe,N)$ is given by
\begin{align*}
 \left [\begin {array}{cc}
\sts & \h om (\M_1, \M_2\otimes \F_2)  \\
\h om ( \M_2\otimes \F_2,\M_1) &  \E nd ( \F_2) \\
\end {array}\right ]
&\\ \oplus
 \left [\begin {array}{cc}
\sts & \h om_X(\M_1\otimes\F_2,\M_2)\\
 \h om_X(\M_2,\M_1\otimes\F_2) &  \E nd_X(\F_2)\\
\end {array}\right ]  & \\
\oplus
 \left [\begin {array}{cc}
\sts & \h om_X(\M_2,\M_1\otimes\F_2)\\
 \h om_X(\M_1\otimes\F_2,\M_2)  &  \E nd_X(\F_2)\\
\end {array}\right ].
\end{align*}
with $ \E nd_X(\F_2)\cong\sts\oplus\F_3$.

\noindent (4) Let $\F= \M\oplus tr_{k'/k}(\lb)$ for some line bundle $\M$ over $X'=X\times_kk'$, $k'/k$ a quadratic field extension.
Then the $\sts$-module structure of  $\mathcal{J}(\A,\Pe,N)$ is given by
\begin{align*}
 \left [\begin {array}{cc}
\sts & \h om (\M_1, \M_2\otimes \F_2)  \\
\h om ( \M_2\otimes \F_2,\M_1) &  \E nd ( \F_2) \\
\end {array}\right ]
&\\ \oplus
 \left [\begin {array}{cc}
\M_1\otimes\M^\vee & tr_{k'/k}(\M_1\otimes\lb^\vee)\\
 \h om_X(\M_2\otimes\F_2,\M) & tr_{k'/k}(\M_2\otimes\F_2\otimes\lb^\vee)\\
\end {array}\right ]  & \\
\oplus
 \left [\begin {array}{cc}
\M_1^\vee \otimes\M& tr_{k'/k}(\M_1^\vee \otimes\lb)\\
 \h om_X(\M,\M_2\otimes\F_2) & tr_{k'/k}(\M_2^\vee \otimes\F_2\otimes\lb)\\
\end {array}\right ].
\end{align*}

\noindent (5) Let $\F= \lb_1\oplus\lb_2\oplus \lb_1^\vee\otimes \lb_2^\vee \otimes {\rm det}\, \E$.
Then the $\sts$-module structure of $\mathcal{J}(\A,\Pe,N)$ is given by
\begin{align*}
 \left [\begin {array}{cc}
\sts & \h om (\M_1, \M_2\otimes \F_2)  \\
\h om ( \M_2\otimes \F_2,\M_1) &  \E nd ( \F_2) \\
\end {array}\right ]
&\\ \oplus
 \left [\begin {array}{ccc}
\M_1\otimes\lb_1^\vee & \M_1\otimes\lb_2^\vee & \M_1\otimes\lb_1\otimes\lb_2\otimes {\rm det}\,\E\\
 \h om_X(\M_2\otimes\F_2,\lb_1) &  \h om_X(\M_2\otimes\F_2,\lb_2) &  \h om_X(\M_2\otimes\F_2,\lb_1^\vee\otimes\lb_2^\vee\otimes
{\rm det}\,\E )\\
\end {array}\right ]  & \\
\oplus
 \left [\begin {array}{ccc}
\M_1^\vee\otimes\lb_1 & \M_1^\vee\otimes\lb_2 & \M_1^\vee\otimes\lb_1^\vee\otimes\lb_2^\vee\otimes {\rm det}\,\E^\vee\\
 \h om_X(\lb_1,\M_2\otimes\F_2) &  \h om_X(\lb_2,\M_2\otimes\F_2) &  \h om_X(\lb_1^\vee\otimes\lb_2^\vee\otimes
{\rm det}\,\E,\M_2\otimes\F_2 )\\
\end {array}\right ].
\end{align*}
The Albert algebras obtained in the above cases are mutually non-isomorphic. They are also non-isomorphic to the ones described
in Examples 5 and 6.
\end{example}

\begin{example}
Let $l/k$ be a quadratic field extension with ${\rm Gal}(l/k)=\{id,\sigma\}$  and $\E=\M\oplus tr_{l/k}(\N)$
as in Lemma 9 (v). Then
 \[ \A\cong
\left [\begin {array}{cc}
\sts & \h om (\M, tr_{l/k}(\N))  \\
\h om (tr_{l/k}(\N),\M) &  \E nd (tr_{l/k}(\N)) \\
\end {array}\right ]
 \]
 and
 $$\E nd (tr_{l/k}(\N))\cong  tr_{l/k}(\N\otimes \N)\oplus tr_{l/k}(\N\otimes \,^{\sigma} \N).$$
 Let $\F$ be another locally free $\sts$-module of constant rank 3 such that
$ {\rm det}\, \F\cong  {\rm det}\, \E $. We have the following possibilities for $\F$:

\begin{enumerate}
\item $\F\in \Omega(3,d)$ is absolutely indecomposable, $d={\rm deg}\,\E$.

\item If $\F$ is indecomposable, but not absolutely indecomposable, then there is a cubic field extension $l'/k$ such that
$\F=tr_{l'/k}(\lb)$ for some line bundle $\lb'$  over $X_{l'}$ such that $\overline{\lb'}\otimes
\overline{^{\omega_2}\lb}\otimes\overline{^{\omega_3}\lb}\cong {\rm det}\,\overline{ \E }$.
\item $\F= \M\oplus \G$ for some line bundle $\M$ and $ \G $ is absolutely indecomposable of rank 2.
\item $\F= \M\oplus \G$ for some line bundle $\M$ and $ \G $ is indecomposable, but not absolutely indecomposable,
 of rank 2. Then there is a quadratic field extension $k'/k$ such that $\F= \M\oplus tr_{k'/k}(\lb)$
  for some line bundle $\M$ over $X'=X\times_kk'$. In
  particular, $\overline{\lb}\otimes \overline{^{\sigma}\lb}\cong {\rm det}\,\overline{ \E }$.
\item If $\F$ is the direct sum of line bundles then $\F= \M_1\oplus\M_2\oplus \M_1^\vee\otimes \M_2^\vee
\otimes {\rm det}\, \E$.
\end{enumerate}

Let $\Pe=\h om_X(\F,\E)=\E\otimes\F^\vee$.

\smallskip
\noindent
(1) Suppose that $\E$ has degree 0. Then $\F=\lb\otimes\F_3$ for some line bundle $\lb $ of degree 0 such
that $\lb^3\cong {\rm det}\, \E $ and the $\sts$-module structure of $\mathcal{J}(\A,\Pe,N)$ is given by
\begin{align*}
\left [\begin {array}{cc}
\sts & \h om (\M, tr_{l/k}(\N))  \\
\h om (tr_{l/k}(\N),\M) &  \E nd (tr_{l/k}(\N)) \\
\end {array}\right ]
&\\ \oplus
\M\otimes\lb^\vee\otimes\F_3\oplus tr_{l/k}(\N \otimes\lb^\vee\otimes\F_3)
 & \oplus
\M^\vee\otimes\lb\otimes \F_3\oplus tr_{l/k}(\N ^\vee\otimes\lb\otimes\F_3)
\end{align*}

\noindent
(2) Let $\F=tr_{l'/k}(\lb)$.
The $\sts$-module structure of $\mathcal{J}(\A,\Pe,N)$ is given by
\begin{align*}
\left [\begin {array}{cc}
\sts & \h om (\M, tr_{l/k}(\N))  \\
\h om (tr_{l/k}(\N),\M) &  \E nd (tr_{l/k}(\N)) \\
\end {array}\right ]
&\\ \oplus
 tr_{l'/k}(\M\otimes\lb^\vee)\oplus  tr_{l/k}(\N)\otimes tr_{l'/k}(\lb^\vee)
 &\\ \oplus
 tr_{l'/k}(\M^\vee\otimes\lb)\oplus  tr_{l/k}(\N^\vee)\otimes tr_{l'/k}(\lb).
\end{align*}

\noindent
(3) Let $\F= \lb\oplus \G$ for some line bundle $\lb$ and $ \G $ is absolutely indecomposable of rank 2.
The $\sts$-module structure of $\mathcal{J}(\A,\Pe,N)$ is given by
\begin{align*}
\left [\begin {array}{cc}
\sts & \h om (\M, tr_{l/k}(\N))  \\
\h om_X (tr_{l/k}(\N),\M) &  \E nd (tr_{l/k}(\N)) \\
\end {array}\right ]
&\\ \oplus
 \left [\begin {array}{cc}
\M\otimes\lb^\vee & \M\otimes\G^\vee \\
 tr_{l/k}(\N\otimes\lb^\vee) &  tr_{l/k}(\N\otimes \G^\vee) \\
\end {array}\right ]  & \\
\oplus
 \left [\begin {array}{cc}
\M^\vee\otimes\lb & \M^\vee\otimes\G\\
 tr_{l/k}(\N^\vee\otimes\lb) &  tr_{l/k}(\N^\vee\otimes \G) \\
\end {array}\right ].
\end{align*}

\noindent
(4) Suppose that $\F=\E$. The $\sts$-module structure of $\mathcal{J}(\A,\Pe,N)$ is given by
\begin{align*}
\left [\begin {array}{cc}
\sts & \h om (\M, tr_{l/k}(\N))  \\
\h om_X (tr_{l/k}(\N),\M) &  \E nd (tr_{l/k}(\N)) \\
\end {array}\right ]
&\\ \oplus
 \left [\begin {array}{cc}
\sts & tr_{l/k}(\M\otimes\N^\vee) \\
 tr_{l/k}(\N\otimes\M^\vee) &  tr_{l/k}(\N\otimes \N^\vee) \\
\end {array}\right ]  & \\
\oplus
 \left [\begin {array}{cc}
\sts & tr_{l/k}(\M^\vee\otimes\N) \\
 tr_{l/k}(\N^\vee\otimes\M) &  tr_{l/k}(\N^\vee\otimes \N) \\
\end {array}\right ]
\end{align*}

\noindent
(5) Let $\F$ be as in (5). The $\sts$-module structure of $\mathcal{J}(\A,\Pe,N)$ is given by
\begin{align*}
\left [\begin {array}{cc}
\sts & \h om (\M, tr_{l/k}(\N))  \\
\h om_X (tr_{l/k}(\N),\M) &  \E nd (tr_{l/k}(\N)) \\
\end {array}\right ]
&\\ \oplus
 \left [\begin {array}{ccc}
\M\otimes\M_1^\vee & \M\otimes\M_2^\vee & \M\otimes\M_1\otimes\M_2\otimes {\rm det}\,\E^\vee\\
 tr_{l/k}(\N\otimes \M_1^\vee) &  tr_{l/k}(\N\otimes \M_2^\vee) &  tr_{l/k}(\N\otimes \M_1
 \otimes \M_1 \otimes{\rm det}\,\E^\vee) \\
\end {array}\right ]  & \\
\oplus
 \left [\begin {array}{ccc}
\M^\vee\otimes\M_1 & \M^\vee\otimes\M_2 & \M\otimes\M_1^\vee\otimes\M_2^\vee\otimes {\rm det}\,\E\\
 tr_{l/k}(\N\otimes \M_1^\vee) &  tr_{l/k}(\N\otimes \M_2^\vee) &  tr_{l/k}(\N\otimes \M_1
 \otimes \M_1^\vee \otimes{\rm det}\,\E) \\
\end {array}\right ]
\end{align*}
The Albert algebras obtained in the above cases are mutually non-isomorphic. They are also non-isomorphic to the ones described
in Examples 5, 6 and 7.
\end{example}

\begin{remark} We see that already by looking at the first Tits constructions starting with trivial Azumaya algebras
over an elliptic curve, we get a plenty of examples for Albert algebras with an interesting module structure.
When it comes to Azumaya algebras of a different kind, it seems harder to explicitly
compute the possible first Tits constructions.
Let, for instance, $\A=A\otimes_k \sts$ be defined over $k$.
 There is the obvious choice $\Pe\cong \N_i\otimes A\otimes_k \sts$ for a first Tits process,
Apart from that, we can use that
$A\otimes \mathcal{O}_{X'}\cong {\rm Mat}_3(\sts)$ for any cubic splitting field $k'$ of $A$,
 $X'=X\times_k k'$, to show that
a necessary condition on any left $\A$-module $\Pe$ of rank one and norm one is that
$$\Pe\otimes\mathcal{O}_{X'}\cong\F\oplus\F\oplus\F$$
for a vector bundle $\F$ of rank 3 over $X'$ with ${\rm det}\,\F\cong\mathcal{O}_{X'}$. This
 restricts the module structure of $\Pe$ somewhat,
 although many possible cases remain.

Another problem is the fact that it does not seem to be clear how
Azumaya algebras of constant rank 9 over an elliptic curve can look like, other than the obvious choices considered here.
\end{remark}

To compute examples of the Tits process seems equally difficult.
Another method to construct Albert algebras is discussed in [Pu3] and applied to find even more examples of Albert algebras over
elliptic curves. This method, which starts with a Jordan algebra of degree 4 over $X$ and looks at its trace zero elements,
gives additional information on how the first Tits process can look like in several special cases. Combined with
the first Tits construction and the Tits process it might help
to get more information on the Albert algebras over an elliptic curve. Again, one of the major difficulties is that
we do not know enough yet on Jordan algebras of degree 4 over $X$.

\smallskip
{\it Acknowledgements:}
The author would like to acknowledge the support of the ``Georg-Thieme-Ged\"{a}chtnisstiftung''
(Deutsche Forschungsgemeinschaft) during her stay at the University of Trento in 2005/6,
 and thanks the Department of Mathematics at the  University of Trento for its hospitality.

\end{document}